\newcommand{\norm}{\,|\!|\,}
\newcommand{\loc}{\text{\rm loc}}
\newcommand{\osc}{\text{\rm osc}}
\numberwithin{equation}{section} 
\newtheorem{theorem}{\textbf{Theorem}}[section]
\newtheorem{lemma}{\textsc{Lemma}}[section]
\newtheorem{corollary}{\textsc{Corollary}}[section]
\theoremstyle{definition}
\theoremstyle{remark} \newtheorem{remark}{Remark}[section]}
\newcommand{\divo}{\textnormal{div}_{H}}
\def\eqn#1$$#2$${\begin{equation}\label#1#2\end{equation}}
\newcommand{\R}{\mathbb R}
\newcommand{\Om}{\Omega}
\newcommand{\X}{\mathfrak X}
\newcommand{\Xu}{\mathfrak X u}
\newcommand{\dx}{\, dx}
\renewcommand{\epsilon}{\varepsilon}
\def\mvint_#1{\mathchoice
          {\mathop{\vrule width 6pt height 3 pt depth -2.5pt
                  \kern -8pt \intop}\nolimits_{\hspace{-1ex}#1}}%
          {\mathop{\vrule width 5pt height 3 pt depth -2.6pt
                  \kern -6pt \intop}\nolimits_{\hspace{-1ex}#1}}%
          {\mathop{\vrule width 5pt height 3 pt depth -2.6pt
                  \kern -6pt \intop}\nolimits_{\hspace{-1ex}#1}}%
          {\mathop{\vrule width 5pt height 3 pt depth -2.6pt
                  \kern -6pt \intop}\nolimits_{\hspace{-1ex}#1}}}
\newcommand{\deltaX}{\big(\delta +|\Xu|^2\big)}
\newcommand{\weight}{\big(\delta+|\Xu|^2\big)^\frac{p-2}{2}}
 \newcommand{\intav}{-\hskip -1.1em\int} \newbox\tratto
\newcommand{\medint}{\displaystyle\copy\tratto\kern-10.4pt\int\limits}
\begin{document}

\title[Variational problems in the Heisenberg group]
{$C^{1,\alpha}$-Regularity for variational problems in the Heisenberg group}
\author{Shirsho Mukherjee and Xiao Zhong}
\thanks{Mukherjee was supported by the European Union’s Seventh Framework Programme, Marie Curie Actions-Initial Training Network, under Grant Agreement No. 607643, “Metric
Analysis For Emergent Technologies (MAnET)”. Zhong was supported by the Academy of Finland, the Centre of Excellence in Analysis and Dynamics Research and the Academy Project (project 308759).}
\subjclass[2000]{Primary 35H20, 35J70} \keywords{Heisenberg group,
$p$-Laplacian, weak solutions, regularity}
\date{\today}
\address[S.\ Mukherjee]{Department of Mathematics and Statistics,
P.O.Box 35 (MaD), FIN-40014 University of Jyv\"askyl\"a, Finland}
\email{shirsho.s.mukherjee@jyu.fi}
\address[X.\ Zhong]{Department of Mathematics and Statistics, University of Helsinki, P.O. Box 68 (Gustaf H\"allstr\"omin katu 2b)
FIN-00014 University of Helsinki, Finland}
\email{xiao.x.zhong@helsinki.fi}


\begin{abstract}
We study the regularity of minima of scalar variational
integrals of $p$-growth, $1<p<\infty$, in the Heisenberg group
and prove the H\"older continuity of horizontal gradient of minima.
\end{abstract}

\maketitle


\section{Introduction\label{section:intro}}

Following \cite{Zh}, 
we continue to study in this paper the regularity of minima of scalar variational
integrals in the Heisenberg group ${\mathbb H}^n, n\ge 1$. 
Let $\Omega$ be a
domain in ${\mathbb H}^n$ and $u:\Omega\to {\mathbb R}$ a function. 
We denote by $\X u=(X_1u, X_2u,\ldots,X_{2n}u)$ the horizontal gradient
of $u$.
We study  the following variational problem
\begin{equation}\label{functional}
I(u)=\int_\Omega f(\X u)\, dx,  
\end{equation} 
where the convex integrand function $f\in C^2({\mathbb R}^{2n};{\mathbb
R})$ is of $p$-growth, $1<p<\infty$. It satisfies the following
growth and ellipticity conditions
\begin{equation}\label{structure}
\begin{aligned}(\delta+| z|^2)^{\frac{p-2}{2}} |\xi|^2\le
\langle D^2f(z) &\xi,\xi\rangle \le L(\delta+|
z|^2)^{\frac{p-2}{2}}| \xi|^2;\\
| Df(z)| & \le L(\delta+| z|^2)^{\frac{p-2}{2}}|z|
\end{aligned}
\end{equation}
for all $z, \xi\in \R^{2n}$, where $\delta\ge 0, L\ge 1$ are
constants.

It is easy to prove that a function in the horizontal Sobolev space $HW^{1,p}(\Omega)$ is a local minimizer of functional (\ref{functional}) if and only if
it is a weak solution of 
the corresponding Euler-Lagrange equation of
(\ref{functional})
\begin{equation}\label{equation:main}
\divo \big(Df(\X u)\big)=\sum_{i=1}^{2n}X_i\big(D_i f(\X u)\big)=0.
\end{equation}
where $Df=(D_1 f, D_2f,\ldots,D_{2n}f)$ is the Euclidean gradient
of $f$. See Section \ref{section:preliminaries} for the definitions of horizontal Sobolev space $HW^{1,p}(\Omega)$, weak solutions and local minimizers. 

A prototype example of integrand functions satisfying conditions (\ref{structure})
is 
\[ f(z)=\big(\delta+| z|^2\big)^{\frac p 2}\]
for a constant $\delta\ge 0$. 
Then the Euler-Lagrange
equation (\ref{equation:main}) is reduced to the non-degenerate $p$-Laplacian equation
\begin{equation}\label{equation:p:nondeg}
\divo \big(\weight\X u\big)=0,\end{equation} when $\delta>0$,
and the $p$-Laplacian equation
\begin{equation}\label{equation:p}
\divo\big(|\X u|^{p-2}\X u\big)=0,
\end{equation}
when $\delta=0$. The weak solutions of equation (\ref{equation:p})
are called $p$-harmonic functions.

For the regularity of weak solutions of equation (\ref{equation:main}), the second author proved in \cite{Zh} the following theorem, Theorem 1.1 of \cite{Zh}, from which follows the Lipschitz continuity of weak solutions for 
all $1<p<\infty$. We remark that this result holds both for the non-degenerate case ($\delta>0$) and for the degenerate one ($\delta=0$). We also remark
that it holds under a bit more general growth condition on the integrand 
function $f$ than (\ref{structure}). Precisely, in \cite{Zh} the integrand function $f$ is assumed to satisfy
\begin{equation}\label{structureprime}
\begin{aligned}(\delta+| z|^2)^{\frac{p-2}{2}} |\xi|^2\le
\langle D^2f(z) &\xi,\xi\rangle \le L(\delta+|
z|^2)^{\frac{p-2}{2}}| \xi|^2;\\
| Df(z)| & \le L(\delta+| z|^2)^{\frac{p-1}{2}}
\end{aligned}
\end{equation}
for all $z, \xi\in \R^{2n}$, where $\delta\ge 0, L\ge 1$ are
constants.
\begin{theorem}\label{thm:lip}
Let $1<p<\infty$, $\delta\ge 0$ and $u\in HW^{1,p}(\Omega)$ be a weak solution of
equation (\ref{equation:main}) satisfying the structure condition (\ref{structureprime}). Then $\Xu \in
L^\infty_{\loc}(\Omega;{\mathbb R}^{2n})$. Moreover, for any
ball $B_{2r}\subset \Omega$, we have that
\begin{equation}\label{Xu:bdd}
\sup_{B_r}|\X u|\le c\Big(\intav_{B_{2r}} \deltaX^{\frac p 2}\, dx\Big)^{\frac 1 p},
\end{equation}
where $c>0$ depends only on $n,p,L$.
\end{theorem}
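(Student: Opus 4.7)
The plan is to prove the $L^\infty$ bound by a combination of approximation and Moser-type iteration on $|\Xu|^2$, adapted to the sub-Riemannian structure of $\hh^n$. First, regularize: replace $f$ by $f_\epsilon(z)=f(z)+\epsilon(1+|z|^2)^{p/2}$ and $\delta$ by $\delta+\epsilon$ so that the approximating integrand is strictly convex of $p$-growth and the corresponding Euler–Lagrange equation is non-degenerate. A standard sub-elliptic bootstrap argument (using that $X_1,\dots,X_{2n}$ together with $T=[X_i,X_{i+n}]$ satisfy Hörmander's condition) produces approximating solutions $u_\epsilon$ that are smooth enough to justify all subsequent differentiations. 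It then suffices to establish the estimate \eqref{Xu:bdd} with a constant independent of $\epsilon$, and let $\epsilon\to 0$.

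Second, I would derive Caccioppoli-type inequalities for the horizontal derivatives $X_s u$. Differentiating the regularized equation along $X_s$ for each $s\in\{1,\dots,2n\}$ and testing with $\eta^2(\delta+|\Xu|^2)^{\beta/2}X_s u$ (summed in $s$), the ellipticity in \eqref{structure} gives, after standard manipulations,
\[
\int \eta^2 \weights |\X(\Xu)|^2 \,dx \le C\int |\X\eta|^2 (\delta+|\Xu|^2)^{\frac{p+\beta}{2}}\,dx + (\text{commutator terms involving } Tu).
\]
The commutator terms arise because the $X_s$ do not commute with each other: from $[X_i,X_{i+n}]=T$ one obtains extra contributions proportional to $\int \eta^2 \weight\, Tu \cdot X_s u\, dx$ and analogous expressions, which carry one factor of the vertical derivative $Tu$.

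The hard part, and the genuinely Heisenberg-specific step, is controlling these vertical error terms. For this I would differentiate the equation along $T$ as well; since $T$ commutes with every $X_s$ the computation is cleaner, and testing the resulting equation with $\eta^2(\delta+|\Xu|^2)^{\beta/2}Tu$ yields a Caccioppoli estimate
\[
\int \eta^2 \weights |\X(Tu)|^2 \,dx \le C \int |\X\eta|^2 \weights |Tu|^2 \,dx.
\]
The key gain is that $T$ has step $2$ in the Lie algebra, so the $Tu$-terms appearing as errors in the $X_s u$-estimate can be absorbed, up to a small constant times $\int \eta^2 \weights |\X(\Xu)|^2\,dx$, by combining the two Caccioppoli estimates with Cauchy–Schwarz and a Young-type inequality that trades one power of $\weight^{1/2}$ for a $T$-derivative. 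This coupling between the horizontal and vertical Caccioppoli estimates, which is the technical heart of the argument, is what I expect to be the main obstacle and is where the Heisenberg-group geometry enters crucially.

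Finally, with the coupled estimate in hand, I would apply the Sobolev–Poincaré inequality on $\hh^n$ with homogeneous dimension $Q=2n+2$ to convert it into a reverse-Hölder inequality for $(\delta+|\Xu|^2)^{(p+\beta)/2}$ on concentric balls. Iterating this inequality along a geometric sequence of exponents $\beta_k\to\infty$ and a geometrically shrinking sequence of radii $r_k\to r$, and summing the resulting geometric series in the standard Moser/De Giorgi fashion, produces the $L^\infty$ bound \eqref{Xu:bdd} uniformly in $\epsilon$. Passing $\epsilon\to 0$, using the uniform local boundedness to extract a limit minimizer, and identifying it with $u$ by uniqueness for the minimization problem completes the proof.
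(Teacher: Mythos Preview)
This theorem is not proved in the present paper; it is quoted from \cite{Zh} (Theorem~1.1 there), and the paper only records some of the auxiliary lemmas (Lemmas~\ref{caccioppoli:T}, \ref{cor:Tu:high:original}, \ref{caccioppoli:horizontal:sigma}, \ref{caccioppoli:horizontal:T}) without reproducing the full argument. Your overall architecture---regularize, derive Caccioppoli estimates for $X_s u$ and for $Tu$, couple them, then Moser-iterate using the Sobolev inequality with homogeneous dimension $Q$---is indeed the scheme of \cite{Zh}.

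However, the coupling step in your sketch contains a real gap. The $Tu$-error in the horizontal Caccioppoli estimate (Lemma~\ref{caccioppoli:horizontal:sigma}) is of the form $\int \eta^2 \weight |\Xu|^\beta |Tu|^2\,dx$, and it \emph{cannot} be absorbed into $\int \eta^2 \weights |\X\X u|^2\,dx$ by Cauchy--Schwarz/Young as you propose: the weights do not match and there is no smallness parameter to exploit. Moreover, the Caccioppoli estimate for $Tu$ (Lemma~\ref{caccioppoli:T}) carries powers of $|Tu|$, not of $|\Xu|$, as weight; testing the $T$-differentiated equation with $\eta^2(\delta+|\Xu|^2)^{\beta/2}Tu$ as you suggest reintroduces $\X\X u$-terms from differentiating the weight and does not close. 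What \cite{Zh} actually does is establish a \emph{separate} high-integrability estimate for $Tu$ (Lemma~\ref{cor:Tu:high:original} here), and this in turn requires an additional, rather delicate inequality (Lemma~\ref{caccioppoli:horizontal:T}) bounding $\int \eta^{\beta+2}\weight |Tu|^\beta |\X\X u|^2\,dx$ by $\int \eta^\beta \weight |\Xu|^2 |Tu|^{\beta-2}|\X\X u|^2\,dx$, which is then iterated in $\beta$ together with Lemma~\ref{caccioppoli:horizontal:sigma}. Only after this $Tu$-integrability is in hand can the Moser iteration for $|\Xu|$ be closed. This intermediate iteration is the technical heart of the Lipschitz estimate in \cite{Zh} and is absent from your outline.
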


Here and in the following, the ball $B_r$ is defined with respect to the 
Carnot-Carath\`eodory metric (CC-metric) $d$; $B_{2r}$ is the double size ball with the same center, see Section \ref{section:preliminaries} for the
definitions.

The second author also proved in \cite{Zh} that the horizontal gradient of weak solutions of equation (\ref{equation:main}) is H\"older continuous when $p\ge
2$. We remark again that this result holds under the condition (\ref{structureprime}), and that it holds both for the non-degenerate case ($\delta>0$) and for the degenerate one ($\delta=0$).

\begin{theorem}\label{thm:holder}
Let  $2\le p<\infty$, $\delta\ge 0$ and $u\in HW^{1,p}(\Omega)$ be a weak
solution of equation (\ref{equation:main}) satisfying the structure condition 
(\ref{structureprime}). Then the horizontal
gradient $\X u$ is H\"older continuous. Moreover, there is a
positive exponent $\alpha=\alpha(n,p,L)\le 1$ such that for any
ball $B_{r_0}\subset \Omega$ and any $0<r\le r_0$, we have
\begin{equation}\label{Xu:holderprime} \max_{1\le l \le 2n} {\operatorname{osc}}_{B_r} X_l u
\le
c\Big(\frac{r}{r_0}\Big)^\alpha\Big(\intav_{B_{r_0}}\deltaX^{\frac{
p}{ 2}}\, dx\Big)^{\frac{1}{p}},\end{equation} where $c>0$ depends only on
$n,p, L$.
\end{theorem}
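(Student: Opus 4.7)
The proof follows the Uhlenbeck--DiBenedetto strategy for $C^{1,\alpha}$-regularity of $p$-Laplacian-type equations, adapted to the Heisenberg setting and bootstrapped on the Lipschitz bound from Theorem~\ref{thm:lip}.

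First, I would differentiate \eqref{equation:main} horizontally. Using difference quotients $D_h^{X_s}$ and the fact that $\X u \in L^\infty_\loc$ by Theorem~\ref{thm:lip}, each horizontal derivative $X_s u$ weakly satisfies a divergence-form linear equation
\[
\sum_{i,j=1}^{2n} X_i\bigl(A_{ij}(x)\, X_j(X_s u)\bigr) = R_s,
\]
whose coefficients $A_{ij}(x) = D^2_{ij} f(\X u(x))$ are elliptic with common weight $\weight$ by \eqref{structure}. The right-hand side $R_s$ collects the commutators $[X_i,X_s]$ applied to first-order derivatives of $u$; in $\mathbb{H}^n$ these commutators are scalar multiples of the characteristic field $T$, so $R_s$ inevitably contains $Tu$.

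Second, I would derive two Caccioppoli-type estimates. Testing the differentiated equation with $\eta^2(X_l u - k)_\pm$ for $\eta \in C_c^\infty$, and using Theorem~\ref{thm:lip} to bound $\weight$ in $L^\infty$, yields a level-set Caccioppoli inequality for $X_l u$. To absorb the commutator errors in $R_s$, one needs a parallel Caccioppoli inequality for $Tu$ in terms of $\X u$; this is obtained from the original equation using $[T,X_i]=0$ and, once more, the Lipschitz bound. With both inequalities at hand, a De~Giorgi level-set iteration on $\{X_l u > k\}$ and $\{X_l u < k\}$ inside each ball $B_\rho$, combined with the sub-Riemannian Sobolev--Poincar\'e inequality, produces an oscillation decay
\[
\operatorname{osc}_{B_{\rho/2}} X_l u \le \theta\, \operatorname{osc}_{B_\rho} X_l u
\]
for some $\theta \in (0,1)$ depending only on $n, p, L$. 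Iterating on geometrically shrinking balls yields H\"older continuity of each $X_l u$ with exponent $\alpha = \alpha(n,p,L)$, and combining with \eqref{Xu:bdd} gives \eqref{Xu:holderprime}.

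The chief obstacle is genuinely sub-Riemannian. Since $[X_i,X_{i+n}]$ is a non-zero multiple of $T$, horizontal differentiation of the equation cannot avoid the characteristic direction, and $Tu$ is not controlled a priori. The intrinsic Caccioppoli-type bound for $Tu$ in terms of $\X u$, with constants uniform in $\delta$, is therefore the technical heart of the argument. The hypothesis $p \ge 2$ enters precisely here: the weight $\weight$ is then non-singular at $\X u = 0$, so the degenerate set causes no loss of integrability and the whole scheme closes using only the $L^\infty$-bound on $\X u$ already guaranteed by Theorem~\ref{thm:lip}. For $1 < p < 2$ the weight becomes singular at the degenerate set, which is exactly the difficulty the present paper must overcome beyond \cite{Zh}.
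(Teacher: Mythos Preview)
Your outline captures the broad strategy correctly---differentiate horizontally, control the commutator term $Tu$, then run a De~Giorgi-type argument---and you correctly identify that the $Tu$ estimate is the sub-Riemannian heart of the matter. But there is a genuine gap in the passage from the Caccioppoli inequality to the clean oscillation decay $\osc_{B_{\rho/2}} X_l u \le \theta\,\osc_{B_\rho} X_l u$.

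The issue is ellipticity from below. You invoke the $L^\infty$ bound on $\X u$ to bound the weight $\weight$ from above, and you note that for $p\ge 2$ the weight is non-singular. But ``non-singular'' is not ``bounded away from zero'': when $\delta=0$ the weight vanishes on $\{\X u=0\}$, so the linearized equation for $X_l u$ is degenerate, not uniformly elliptic, and a naive De~Giorgi iteration does not close. The paper (following \cite{Zh}, which is where this theorem is actually proved) handles this by a dichotomy on the measure of $\{x\in B_{4r}: X_l u<\mu(4r)/4\}$. In one alternative, a Moser iteration applied to the double truncation $v=\min\bigl(\mu(r)/8,\max(\mu(r)/4-X_lu,0)\bigr)$ (this is Lemma~\ref{lemma:main} and Corollary~\ref{prop:case1}) forces $|\X u|\ge c\,\mu(r)$ on a smaller ball, restoring uniform ellipticity there; in the other alternative, one works only on the super-level sets $\{X_iu>\mu(8r)/4\}$, where $|\X u|\ge \mu(8r)/4$ holds trivially. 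In both cases the weighted Caccioppoli inequality (Lemma~\ref{lemma:cacci:k}) then places $X_l u$ in an \emph{inhomogeneous} De~Giorgi class, and the resulting oscillation decay (Theorem~\ref{thm:holder2}) carries an additive error $\big(\delta+\mu(r_0)^2\big)^{1/2}(r/r_0)^\alpha$---not the pure geometric decay you wrote---which is then iterated away.

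A second, smaller point: the $Tu$ control you allude to is not just a Caccioppoli inequality but a high-integrability estimate (Lemma~\ref{cor:Tu:high:original}), giving $\weight|Tu|^{\beta+2}\in L^1_{\loc}$ for all $\beta\ge 2$ with the right scaling. This is what makes the De~Giorgi error term of the correct order; a mere $L^2$ Caccioppoli bound for $Tu$ would not suffice.
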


We refer to the paper \cite{Zh} and the references therein, e.g. \cite{H, F, F2, Kohn, BLU, Ca1, Ca2, CDG, CG, Fog, DM,
DM2, D, Ma, MZZ, MM} for the earlier work on the regularity of weak solutions of equation (\ref{equation:main}). 

The result in Theorem \ref{thm:holder} leaves open the H\"older continuity of horizontal gradient of weak solutions for equation (\ref{equation:main}) in the case $1<p<2$. In this paper, we prove that the same result holds for this case, under the structure condition (\ref{structure}). This is the main result of this paper.

\begin{theorem}\label{thm:main}
Let  $1< p<\infty$, $\delta\ge 0$ and $u\in HW^{1,p}(\Omega)$ be a weak
solution of equation (\ref{equation:main}) satisfying the structure condition (\ref{structure}). Then the horizontal
gradient $\X u$ is H\"older continuous. Moreover, there is a
positive exponent $\alpha=\alpha(n,p,L)\le 1$ such that for any
ball $B_{r_0}\subset \Omega$ and any $0<r\le r_0$, we have
\begin{equation}\label{Xu:holder} \max_{1\le l \le 2n} {\operatorname{osc}}_{B_r} X_l u
\le
c\Big(\frac{r}{r_0}\Big)^\alpha\Big(\intav_{B_{r_0}}\deltaX^{\frac{
p}{ 2}}\, dx\Big)^{\frac{1}{p}},\end{equation} where $c>0$ depends only on
$n,p, L$.
\end{theorem}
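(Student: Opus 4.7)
The plan is to follow the strategy from \cite{Zh} developed for $p\ge 2$ and adapt it to the singular regime $1<p<2$, which is the only genuinely new case: condition (\ref{structure}) is a special case of (\ref{structureprime}), so Theorem \ref{thm:holder} already gives (\ref{Xu:holder}) for $p\ge 2$. I therefore restrict to $1<p<2$. As a preliminary step I regularize $f$ to a smooth, strongly convex integrand with strictly positive $\delta_\epsilon$, so that the solution $u$ may be assumed smooth and all computations are justified; Theorem \ref{thm:lip} supplies the uniform $L^\infty$ bound on $\Xu$ that is needed throughout, and (\ref{Xu:holder}) will be recovered by passing to the limit.

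The core of the proof is oscillation decay for each horizontal derivative $v=X_l u$. Differentiating (\ref{equation:main}) along a horizontal direction $X_s$ yields, modulo commutators $[X_s,X_j]$ that are multiples of the vertical field $T$, a linear elliptic equation for $v$ with measurable coefficients $a_{ij}=D_{ij}f(\Xu)$ satisfying $\weight|\xi|^2\le a_{ij}\xi_i\xi_j\le L\weight|\xi|^2$. Testing with $\eta^2(v-k)_\pm$ I expect a Caccioppoli inequality of the form
\[
\int \eta^2\,\weight\,|\X(v-k)_\pm|^2\,dx \;\le\; c\int |\X\eta|^2\,\weight\,(v-k)_\pm^2\,dx + \mathcal{E}[Tu],
\]
where the vertical error $\mathcal{E}[Tu]$ arises from the non-commutativity of the horizontal frame. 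To close this inequality I need an independent bound on $Tu$, and this is where the main work lies.

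The vertical estimate proceeds by applying $T$ to the equation and testing with $\eta^2\,Tu$ multiplied by a suitable power $(\delta+|\Xu|^2)^\beta$; several integrations by parts, together with the commutation relations on $\mathbb{H}^n$, should produce an inequality of the form
\[
\int \eta^2\,\weight\,|Tu|^2\,dx \;\le\; c\int \big(|\X\eta|^2+\eta|T\eta|\big)\,\weight\,|\Xu|^2\,dx.
\]
It is at this point that the improved growth hypothesis (\ref{structure}), $|Df(z)|\le L(\delta+|z|^2)^{(p-2)/2}|z|$, becomes essential in a way that (\ref{structureprime}) cannot support: it supplies the extra factor of $|\Xu|$ needed to absorb the singular weight on the right-hand side when $p<2$, once combined with the $L^\infty$ bound from Theorem \ref{thm:lip}. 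With the horizontal and vertical estimates in hand, a standard De Giorgi iteration on the super- and sub-level sets of $v=X_l u$ produces a density lemma and hence oscillation decay $\osc_{B_{\tau r}} X_l u \le \theta\,\osc_{B_r} X_l u + \text{l.o.t.}$ with $\theta<1$; the H\"older bound (\ref{Xu:holder}) then follows by a routine iteration argument.

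The principal obstacle I foresee is exactly the vertical Caccioppoli estimate for $Tu$ in the singular range $1<p<2$: the weight $\weight$ now blows up where $\Xu$ vanishes, reversing the situation of \cite{Zh} where it was degenerate and could be treated as small. The sign of several error terms changes accordingly, and absorbing them forces the auxiliary exponent $\beta$ to lie in a narrow window dictated by (\ref{structure}); verifying that such a window exists for all $1<p<2$, and tracking the uniform dependence of the constants on the regularization parameter, is where the main technical effort of the proof will be concentrated.
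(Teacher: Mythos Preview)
Your overall architecture---regularize to $\delta>0$, derive Caccioppoli inequalities for $X_lu$ and $Tu$, run De Giorgi, then pass to the limit---matches the paper's. But there is a genuine gap in the step you treat as ``standard'': the De Giorgi iteration on level sets of $X_lu$ does not produce a uniform density lemma in the singular range without an additional device. The problem is not primarily the $Tu$-estimate (which, under condition (\ref{structure}), follows essentially as in \cite{Zh} with only minor changes; see Lemma \ref{cor:Tu:high}). The problem is the horizontal Caccioppoli inequality itself: the right-hand side contains the term $\int |\X\eta|^2\,\weight\,(X_lu-k)_\pm^2\,dx$, and for $1<p<2$ the weight $\weight$ blows up like $\delta^{(p-2)/2}$ where $\X u$ vanishes, so the estimate is useless unless you know a priori that $|\X u|$ is bounded below on the relevant level set.

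The paper resolves this with a specific technical device that your proposal does not contain. One works not with $(X_lu-k)_\pm$ directly but with the \emph{double truncation}
\[
v=\min\big(\mu(r)/8,\max(\mu(r)/4-X_lu,0)\big),
\]
whose gradient is supported on the set $E=\{\mu(r)/8<X_lu<\mu(r)/4\}$, where $|\X u|\ge\mu(r)/8$ automatically. On $E$ the weight is comparable to $(\delta+\mu(r)^2)^{(p-2)/2}$, uniformly in $\delta$. To obtain the Caccioppoli inequality for $v$ (Lemma \ref{lemma:main}), one needs two auxiliary estimates (Lemma \ref{lem:start} and Lemma \ref{lem:tech}) in which the Caccioppoli inequalities for $\X u$ and for $Tu$ carry \emph{extra} weights $|\X u|^2$ and $|\X u|^4$ respectively; these weights are what allow the singular factor to be absorbed. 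The density lemma (Corollary \ref{prop:case1}) then follows by Moser iteration on $v$, and the final oscillation decay proceeds via a dichotomy: either the density lemma fires and gives $|\X u|\gtrsim\mu(r)$ on a subball, or one is looking at levels $k\ge\mu(8r)/4$ where $|\X u|\ge k$ is automatic on the superlevel set (see Lemma \ref{lemma:cacci:k} and the proof of Theorem \ref{thm:holder2}). Your proposal as written would produce constants that blow up as $\delta\to 0$; you need the double-truncation mechanism, or an equivalent, to make the argument uniform.
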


For $p\neq 2$, it is well known that weak solutions of
equations of type (\ref{equation:main}) in the Euclidean spaces
are of the class $C^{1,\alpha}$, that is, they
have H\"older continuous derivatives, see \cite{Ur, LU, E, Di, Le, To}.
The $C^{1,\alpha}$-regularity is optimal when $p>2$. This can been seen by examples. 
Theorem \ref{thm:main} shows that the regularity theory 
for equation (\ref{equation:main}) in the setting of Heisenberg group 
is similar to that in the setting of Euclidean spaces. 

The proof of Theorem \ref{thm:main}  is based on De Giorgi's method \cite{De}
and it works for all $1<p<\infty$. The approach is similar to that  of Tolksdorff \cite{To} and  Lieberman  \cite{Lieb}
in the setting of Euclidean spaces. The idea is to consider the double 
truncation of the horizontal derivative $X_lu, l=1,2,..., 2n$, of the weak solution $u$ to equation (\ref{equation:main}) satisfying the structure condition (\ref{structure}) with $\delta>0$
\[ v=\min\big( \mu(r)/8, \max(\mu(r)/4-X_lu,0)\big), \]
where 
\[ \mu(r)= \max_{1\le i\le 2n}\sup_{B_r}\vert X_i u\vert,\]
and $B_r\subset\Omega$ is a ball. 
The whole difficulties of this work lie in proving the following Caccioppoli type inequality for $v$. In the following lemma, $\eta\in C^\infty_0(B_r)$ is a 
non-negative cut-off function such that $0\le \eta\le 1$ in $B_r$, 
$\eta=1$ in $B_{r/2}$ and that $|\X \eta|\le 4/r$, $|\X \X \eta|\le 16n/r^2$, $| T\eta|\le 32n/r^2$ in $B_r$. 

\begin{lemma}\label{lemma:main}
Let $\gamma>1$ be a number. We have the following Caccioppoli type inequality
 \begin{equation*}
  \int_{B_r} \eta^{\beta+4}v^{\beta+2}|\X v|^2\, dx \leq c(\beta+2)^2
\frac{| B_r|^{1-1/\gamma}}{r^2}
  \mu(r)^4\Big(\int_{B_r}\eta^{\gamma\beta}v^{\gamma\beta}\, dx\Big)^{1/\gamma}
\end{equation*}
for all $\beta\ge 0$, where $c=c(n,p, L,\gamma)>0$.
\end{lemma}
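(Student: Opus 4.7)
The plan is to derive this Caccioppoli-type inequality by differentiating equation (\ref{equation:main}) along the horizontal direction $X_l$ and testing the resulting identity against an admissible function built from $v$ and $\eta$. The strategy mirrors the De Giorgi-type scheme already used in \cite{Zh} to prove Theorem \ref{thm:holder}; what is new is that all estimates must be organised so that the final bound is uniform in $p\in(1,\infty)$ and free of the weight $(\delta+|\Xu|^2)^{(p-2)/2}$.

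First, the second-order $L^2_{\loc}$-differentiability of $u$ available from \cite{Zh}, combined with an $X_l$-difference-quotient argument, gives in the weak sense
\[
\sum_{i,j=1}^{2n}X_i\bigl(D_iD_j f(\Xu)\,X_l X_j u\bigr)=-\sum_{i,j=1}^{2n}X_i\bigl(D_iD_j f(\Xu)\,[X_l,X_j]u\bigr).
\]
Since $[X_l,X_j]$ is zero or $\pm T$ in $\bH^n$, the right-hand side is a first-order expression in $Tu$. Testing this identity with $\varphi=\eta^{\beta+4}v^{\beta+3}$, which is non-negative, Lipschitz, and compactly supported in $B_r$, and noting that $\X v=-\X X_l u$ on $\{0<v<\mu(r)/8\}$ while $\X v=0$ elsewhere on $\supp v$, the ellipticity in (\ref{structure}) extracts the good term
\[
(\beta+3)\int_{B_r}\eta^{\beta+4}v^{\beta+2}\bigl(\delta+|\Xu|^2\bigr)^{\frac{p-2}{2}}|\X v|^2\, dx
\]
modulo cross-terms with $\X\eta$, which are absorbed by Young's inequality using $|\X\eta|\le 4/r$.

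The delicate step is the control of the commutator contributions, which after carrying out the testing are schematically of the form $(\beta+3)\int \eta^{\beta+4}v^{\beta+2}|D^2 f(\Xu)|\,|Tu|\,|\X v|\, dx$ plus analogous terms involving $\X\eta$ or $T\eta$. These are handled by Cauchy--Schwarz, with the $|\X v|$-factor absorbed in the good term via Young's inequality and the remaining weighted integral of $|Tu|^2$ bounded using the Caccioppoli-type estimates for $Tu$ proved in \cite{Zh}. After saturating those estimates by the Lipschitz bound $|\Xu|\le c\mu(r)$ from Theorem \ref{thm:lip} and by $v\le \mu(r)/8$, the weighted contribution of $|Tu|^2$ is of order $\mu(r)^4/r^2$ times the bulk integrand. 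Since both the good term on the left and the dominant error term on the right carry exactly the same weight $(\delta+|\Xu|^2)^{(p-2)/2}$, and since $|\Xu|$ is pinched between $\mu(r)/8$ and $c\mu(r)$ on $\supp v$, this common weight factors out up to constants depending only on $n,p,L$ and drops out of the final inequality. A final application of Hölder's inequality with exponent $\gamma$ converts the remaining bulk integral $\int\eta^{\beta}v^{\beta}\, dx$ into $|B_r|^{1-1/\gamma}\bigl(\int\eta^{\gamma\beta}v^{\gamma\beta}\bigr)^{1/\gamma}$, while the $(\beta+2)^2$ factor accumulates from the squarings introduced by the absorption steps.

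The main obstacle is precisely the treatment of these commutator terms. In the Euclidean analogue, the equation differentiated with respect to $x_l$ is again of divergence form with matching ellipticity constants, and the corresponding Caccioppoli inequality is essentially immediate; in the Heisenberg group the commutators $[X_l,X_j]=\pm T$ inject $T$-derivatives of $u$ that lack a natural divergence structure and must instead be tamed using the finer $L^2$-type estimates for $Tu$ from \cite{Zh}. Keeping all absorption constants under control so that the final bound depends only on $n,p,L,\gamma$---uniformly in $\delta\ge 0$ and across the threshold $p=2$---is the technical core of the proof.
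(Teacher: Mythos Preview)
Your differentiated equation is incorrect, and the error hides precisely the term that carries all the difficulty. Applying $X_l$ to \eqref{equation:main} and commuting past the outer $X_i$ gives, for $l\le n$,
\[
\sum_{i,j}X_i\bigl(D_iD_jf(\Xu)\,X_lX_ju\bigr)+T\bigl(D_{n+l}f(\Xu)\bigr)=0,
\]
not the identity you wrote with $-\sum_{i,j}X_i\bigl(D_iD_jf(\Xu)[X_l,X_j]u\bigr)$ on the right. (The latter equals $-\sum_i X_i\bigl(D_iD_{n+l}f(\Xu)\,Tu\bigr)$, which differs from $-T\bigl(D_{n+l}f(\Xu)\bigr)$ by terms that cannot be absorbed without third derivatives of $f$; compare Lemma \ref{ws:horizontal}.) Consequently, after testing with $\varphi=\eta^{\beta+4}v^{\beta+3}$ you face, in addition to the $|Tu|\,|\X v|$-type terms you describe, the integral
\[
I_3=-\int_\Omega \eta^{\beta+4}v^{\beta+3}\,T\bigl(D_{n+l}f(\Xu)\bigr)\,dx.
\]
Integrating by parts in $T$ transfers a derivative onto $v$, producing a term with $Tv$; on the set $E=\{\mu(r)/8<X_lu<\mu(r)/4\}$ one has $Tv=-X_l(Tu)$, so the relevant quantity is $|\X(Tu)|$, not $|Tu|$. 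The Caccioppoli estimates for $Tu$ from \cite{Zh} (Lemma \ref{caccioppoli:T} and Lemma \ref{cor:Tu:high}) do not by themselves bound $\int_E \eta^{\cdots}v^{\cdots}\weight|\Xu|\,|\X(Tu)|$ in the required form; one has to couple them with a further auxiliary inequality. In the paper this is done via two preparatory lemmas: a weighted Caccioppoli inequality for $\X\X u$ carrying the extra factor $|\Xu|^2$ (Lemma \ref{lem:start}), and a bound on $\int \eta^{\tau(\beta+2)+4}v^{\tau(\beta+4)}\weight|\Xu|^4|\X(Tu)|^2$ obtained by testing the equation for $Tu$ (Lemma \ref{lem:tech}). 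These extra weights $|\Xu|^2$ and $|\Xu|^4$ are what allow the argument to go through uniformly for $1<p<2$. Your sketch omits this entire layer.

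A smaller but related inaccuracy: $|\Xu|$ is \emph{not} pinched between $\mu(r)/8$ and $c\mu(r)$ on $\supp v$, only on $E$. On $\{X_lu\le\mu(r)/8\}$ one has $v=\mu(r)/8$ while $|\Xu|$ may vanish. This is harmless for the good term and for terms carrying a factor $\X v$ or $Tv$ (all supported in $E$), but it means you cannot simply ``factor out the common weight'' from every error term; the paper instead uses \eqref{trivial} together with the support property \eqref{vis0} case by case.
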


The proof of Lemma \ref{lemma:main} is based on the integrability estimate for $Tu$, the vertical derivative of $u$, established in \cite{Zh}, see Lemma
\ref{cor:Tu:high:original} and Lemma \ref{cor:Tu:high}. To prove Lemma \ref{lemma:main}, we consider the equation for $X_lu$, see equations (\ref{equation:horizontal}) and (\ref{equation:horizontal2})
of Lemma \ref{ws:horizontal} in Section \ref{section:preliminaries}. We take the usual testing function 
\[\varphi=\eta^{\beta+4}v^{\beta+3}\] 
for equations
(\ref{equation:horizontal}) and (\ref{equation:horizontal2}), where $\beta\ge 0$. In the case $p\ge 2$,
when equation (\ref{equation:main}) is degenerate, the proof of Lemma \ref{lemma:main} is not difficult.  On the contrary, in the case $1<p<2$, when
equation (\ref{equation:main}) is singular, it is dedicated to prove the desired Caccioppoli inequality in Lemma \ref{lemma:main}. In order to prove Lemma \ref{lemma:main}, we prove two auxiliary lemmas, Lemma \ref{lem:start} and Lemma \ref{lem:tech}, where we establish the Caccioppoli type inequalities
for $\X u$ and $Tu$ involving $v$.  
The essential feature of these inequalities is that we add weights such as the powers
of $\vert\X u\vert$, in order to deal with the singularity of equation (\ref{equation:main}) in the case $1<p<2$. 
The proof of Lemma \ref{lemma:main} is given in Section \ref{proofoflemma}. 

Once Lemma \ref{lemma:main} is established, the proof of Theorem \ref{thm:main}
is similar to that in the setting of Euclidean spaces. We may follow the same line as that in \cite{Zh}. The proof of Theorem \ref{thm:main} is given in Section \ref{section:Holder}. The proof of the auxiliary lemma, Lemma \ref{lem:start}, is given in the Appendix. 

\section{Preliminaries}\label{section:preliminaries}
In this section, we fix our notation and introduce the Heisenberg
group ${\mathbb H}^n$ and the known results on the sub-elliptic equation (\ref{equation:main}).

Throughout this paper, $c$ is a positive constant, which 
may vary from line to line. Except explicitly being specified, it
depends only on the dimension $n$ of the Heisenberg group, and on the constants $p$ and $L$ in the structure
condition (\ref{structure}). But, it does not depend on $\delta$
in (\ref{structure}).

\subsection{Heisenberg group ${\mathbb H}^n$} The Heisenberg group ${\mathbb H}^n$ is identified with the Euclidean space ${\mathbb
R}^{2n+1}, n\ge 1$. The group multiplication is given by
\[ xy=(x_1+y_1, \dots, x_{2n}+y_{2n}, t+s+\frac{1}{2}
\sum_{i=1}^n (x_iy_{n+i}-x_{n+i}y_i))\] for points
$x=(x_1,\ldots,x_{2n},t), y=(y_1,\ldots,y_{2n},s)\in {\mathbb H}^
n$. The left invariant vector fields corresponding to the
canonical basis of the Lie algebra are
\[ X_i=\partial_{x_i}-\frac{x_{n+i}}{2}\partial_t, \quad
X_{n+i}=\partial_{x_{n+i}}+\frac{x_i}{2}\partial_t,\] and the only
non-trivial commutator
\[ T=\partial_t=[X_i,X_{n+i}]=X_iX_{n+i}-X_{n+i}X_i\]
for $1\le i\le n$. We denote by
$\X=(X_1, X_2,\ldots,X_{2n})$ the horizontal gradient.
The second horizontal derivatives are given by the horizontal Hessian
$\X \X u$ of a function $u$, with entries $X_i(X_j u), i,j=1,\ldots, 2n$. Note that it is not symmetric, in general.
The standard Euclidean gradient of a function $v$ in ${\mathbb R}^k$ is denoted by
$Dv=(D_1v,\ldots, D_kv)$ and the Hessian matrix by $D^2v$.

The Haar measure in ${\mathbb H}^n$ is the Lebesgue measure of
${\mathbb R}^{2n+1}$. We denote by $| E|$ the Lebesgue
measure of a measurable set $E\subset {\mathbb H}^n$ and by
\[ \intav_E f\, dx=\frac{1}{| E|}\int_E f\, dx\]
the average of an integrable function $f$ over set $E$.

A ball $B_\rho(x)=\{ y\in {\mathbb H}^n: d(y,x)<\rho\}$
is defined with respect to the Carnot-Carath\`eodory metric (CC-metric) $d$.
The CC-distance of two points in ${\mathbb H}^n$ is the length of the shortest
horizontal curve joining them.

Let $1\le p<\infty$ and $\Omega\subset {\mathbb H}^n$ be an open set. The horizontal Sobolev space $HW^{1,p}(\Omega)$ consists
of functions $u\in L^p(\Omega)$ such that the horizontal weak gradient $\X u$ is also in $L^p(\Omega)$.
$HW^{1,p}(\Omega)$, equipped with the norm
\[ \norm u\norm_{HW^{1,p}(\Omega)}=\norm u\norm_{L^p(\Omega)}+\norm \X u\norm_{L^p(\Omega)},\]
is a Banach space. 
$HW^{1,p}_0(\Omega)$ is the closure of $C^\infty_0(\Omega)$ in $HW^{1,p}(\Omega)$ with this norm.
We denote the local space by $HW^{1,p}_{\loc}(\Omega)$.

The following Sobolev inequality hold for functions $u \in HW^{1,q}_0(B_r)$, $1\le q< Q=2n+2$, 
\begin{equation}\label{sobolev}
\Big(\intav_{B_r}| u|^{\frac{Qq}{Q-q}}\, dx\Big)^{\frac{Q-q}{Qq}}\le c r\Big(\intav_{B_r}| \X u|^q\, dx\Big)^{\frac 1 q},
\end{equation}
where $B_r\subset {\mathbb H}^n$ is a ball and $c=c(n,q)>0$.

\subsection{Known results on sub-elliptic equation \eqref{equation:main}}
A function $u\in HW^{1,p}(\Omega)$ is a local minimizer of functional (\ref{functional}), that is,
\[ \int_\Omega f(\Xu)\, dx \le \int_\Omega f(\X u+\X \varphi)\, dx\]
for all $\varphi\in C_0^\infty(\Omega)$, if and only if it is a weak solution of equation (\ref{equation:main}), that is,
\[\int_\Omega \langle Df(\X u), \X\varphi\rangle\, dx=0\]
for all $\varphi\in C_0^\infty(\Omega)$.

In the rest of this subsection, $u\in HW^{1,p}(\Omega)$ is a weak
solution of equation (\ref{equation:main}) satisfying the structure condition (\ref{structure}) with $\delta>0$. By Theorem \ref{thm:lip}, we have that 
\[ \X u\in L^\infty_{\loc}(\Omega;{\mathbb R}^{2n}).\]
Thanks to this and to the fact that we assume $\delta>0$, equation (\ref{equation:main}) is uniformly elliptic. 
Then we can apply Capogna's results in \cite{Ca1}. Theorem 1.1 and Theorem 3.1 of \cite{Ca1} show that $\X u$ and $Tu$  are
H\"older continuous in $\Omega$, and that
\begin{equation}\label{apregularity} \X u\in
HW^{1,2}_{\loc}(\Omega;{\mathbb R}^{2n}), \quad Tu\in
HW^{1,2}_{\loc}(\Omega)\cap L^\infty_{\loc}(\Omega).\end{equation}
With the above regularity, we can easily prove the following three lemmas. They are 
Lemma 3.1, Lemma 3.2 and Lemma 3.3 of \cite{Zh}, respectively. We refer to
\cite{Zh} for the proofs.

\begin{lemma}\label{ws:horizontal}
Let $v_l=X_l u, l=1,2,\ldots,n$. Then $v_l$ is a weak solution of
\begin{equation}\label{equation:horizontal}
\sum_{i,j=1}^{2n}X_i\big(D_{j}D_if(\X u)
X_jv_l\big)+\sum_{i=1}^{2n}X_i\big(D_{n+l}D_i f(\X u)Tu\big)+T\big(D_{n+l}f(\X
u)\big)=0;
\end{equation}
Let $v_{n+l}=X_{n+l}u, l=1,2,\ldots,n$. Then $v_{n+l}$ is a weak
solution of
\begin{equation}\label{equation:horizontal2}
\sum_{i,j=1}^{2n}X_i\big(D_{j}D_if(\X u)
X_jv_{n+l}\big)-\sum_{i=1}^{2n}X_i\big(D_{l}D_i f(\X u)Tu\big)-T\big(D_{l}f(\X
u)\big)=0;
\end{equation}
\end{lemma}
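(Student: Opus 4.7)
The plan is to differentiate the weak formulation $\int_\Omega\langle Df(\X u),\X\varphi\rangle\,dx=0$ of \eqref{equation:main} in the horizontal direction $X_l$ and carefully track the commutator corrections coming from $T$. The regularity \eqref{apregularity} from Capogna's work, combined with the Lipschitz bound of Theorem \ref{thm:lip}, ensures that $D_if(\X u)$ and $D_jD_if(\X u)$ belong to $HW^{1,2}_{\loc}(\Omega)\cap L^\infty_{\loc}(\Omega)$, so every chain rule and integration by parts below is rigorous.

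Fix $l\in\{1,\dots,n\}$ and $\psi\in C_0^\infty(\Omega)$, and test \eqref{equation:main} with $\varphi=X_l\psi$ to get
\[
0=\int_\Omega\sum_{i=1}^{2n}D_if(\X u)\,X_iX_l\psi\,dx.
\]
The only nontrivial commutator among the horizontal fields is $[X_l,X_{n+l}]=T$, so $X_iX_l\psi=X_lX_i\psi-\delta_{i,n+l}\,T\psi$. This splits the integral, and since $X_l$ has vanishing Euclidean divergence, integration by parts moves $X_l$ onto $D_if(\X u)$. Using the chain rule $X_l(D_if(\X u))=\sum_j D_jD_if(\X u)\,X_lX_ju$ and the same commutator identity once more in the form $X_lX_ju=X_jv_l+\delta_{j,n+l}Tu$, we arrive at
\[
\int_\Omega\sum_{i,j=1}^{2n}D_jD_if(\X u)\,X_jv_l\,X_i\psi\,dx+\int_\Omega\sum_{i=1}^{2n}D_{n+l}D_if(\X u)\,Tu\,X_i\psi\,dx+\int_\Omega D_{n+l}f(\X u)\,T\psi\,dx=0,
\]
which is exactly the weak formulation of \eqref{equation:horizontal}. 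The identity \eqref{equation:horizontal2} follows by the same computation carried out with $\varphi=X_{n+l}\psi$: the commutators now read $[X_i,X_{n+l}]=\delta_{il}T$ and $X_{n+l}X_ju=X_jv_{n+l}-\delta_{jl}Tu$, and the two reversed signs produce the minus signs in front of the $Tu$-term and of $T(D_lf(\X u))$.

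The only genuinely delicate point is the integration by parts that transfers $X_l$ (or $X_{n+l}$) from the smooth test function onto $D_if(\X u)$; this is exactly where the second-order regularity in \eqref{apregularity} is indispensable, for without $\X u\in HW^{1,2}_{\loc}$ the quantity $X_l(D_if(\X u))$ would not even be well defined. Granting \eqref{apregularity}, mollification of $D_if(\X u)$ together with the local $L^\infty$ bounds on the continuous coefficients $D_if$ and $D_jD_if$ evaluated along $\X u$ justifies the passage to the limit, and no further obstacle arises.
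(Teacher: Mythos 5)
Your proof is correct and follows the same natural route as the reference \cite{Zh}: test the weak formulation of \eqref{equation:main} with $\varphi=X_l\psi$, integrate by parts, and track the single nontrivial commutator $[X_l,X_{n+l}]=T$ both when it hits the test function and when it hits $\X u$. One minor overstatement: you claim $D_jD_if(\X u)\in HW^{1,2}_{\loc}$, which would require $f\in C^3$ rather than the assumed $C^2$; this is harmless, however, since the argument only needs $D_jD_if(\X u)\in L^\infty_{\loc}$ (which follows from $f\in C^2$ and $\X u\in L^\infty_{\loc}$) together with $D_if(\X u)\in HW^{1,2}_{\loc}$ for the one integration by parts you perform.
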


\begin{lemma}\label{ws:T}
$Tu$ is a weak solution of
\begin{equation}\label{equation:T}
\sum_{i,j=1}^{2n} X_i\big(D_jD_i f(\Xu)X_j (Tu)\big)=0.
\end{equation}
\end{lemma}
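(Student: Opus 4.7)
The plan is to differentiate the Euler--Lagrange equation in the $T$-direction, exploiting the fact that $T$ is central in the Heisenberg Lie algebra and the Sobolev regularity \eqref{apregularity} of $\Xu$ and $Tu$ obtained from Capogna's theorem. Formally, applying $T$ to the identity $\sum_i X_i(D_if(\Xu))=0$ and using $[T,X_i]=0$ for every $i$ together with the chain rule $T(D_if(\Xu))=\sum_j D_jD_if(\Xu)\,T(X_ju)=\sum_j D_jD_if(\Xu)\,X_j(Tu)$, one immediately recovers \eqref{equation:T}. The task is to make this rigorous at the level of weak formulations.

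First, I would fix a test function $\psi\in C_0^\infty(\Omega)$ and insert $\varphi=D_{-h}^T\psi$ into the weak form $\int_\Omega\langle Df(\Xu),\X\varphi\rangle\,dx=0$ for $h>0$ small enough that $\supp\varphi\Subset\Omega$. Since left translation along $e^{hT}$ commutes with each $X_j$ in $\mathbb H^n$ (the vector field $T$ is central), we have $X_j(D_{-h}^T\psi)=D_{-h}^T(X_j\psi)$; a standard change of variables $x\mapsto xe^{hT}$ then transfers the difference quotient onto the other factor, giving
\begin{equation*}
\int_\Omega \langle D_{h}^T\!\big(Df(\Xu)\big),\X\psi\rangle\,dx=0.
\end{equation*}

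Second, I would pass to the limit $h\to 0$. By \eqref{apregularity} we know $Tu\in L^\infty_{\loc}(\Omega)$ and $T(X_ju)=X_j(Tu)\in L^2_{\loc}(\Omega)$, and by Theorem~\ref{thm:lip} also $\Xu\in L^\infty_{\loc}(\Omega;\R^{2n})$. Since $f\in C^2$ and the arguments stay in a bounded set, the difference quotients $D_{h}^T(Df(\Xu))$ converge in $L^2_{\loc}$ to $T(Df(\Xu))=\sum_j D_jDf(\Xu)\,T(X_ju)=\sum_j D_jDf(\Xu)\,X_j(Tu)$, using once more the commutation $T(X_ju)=X_j(Tu)$. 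Taking the limit yields
\begin{equation*}
\int_\Omega \sum_{i,j=1}^{2n} D_jD_if(\Xu)\,X_j(Tu)\,X_i\psi\,dx=0,
\end{equation*}
which is precisely the weak form of \eqref{equation:T} for arbitrary $\psi\in C_0^\infty(\Omega)$.

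The only nontrivial point here is the justification of the passage to the limit, and this is where the a priori regularity \eqref{apregularity} from \cite{Ca1} (together with Theorem~\ref{thm:lip}) is essential: without $\Xu\in L^\infty_{\loc}$ and $X(Tu)\in L^2_{\loc}$ we could neither control $D^2f(\Xu)$ in $L^\infty_{\loc}$ nor identify the $L^2$-limit of $D_{h}^T(Df(\Xu))$. Once this is in place, the argument is a standard difference-quotient differentiation and, as the authors note, parallels the corresponding argument in \cite{Zh}.
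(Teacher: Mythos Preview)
Your argument is correct and is exactly the standard difference-quotient differentiation in the $T$-direction that the paper has in mind; indeed the authors do not spell out a proof here but simply refer to \cite{Zh}, where the same strategy (test with $D_{-h}^T\psi$, use $[T,X_i]=0$ to commute, and pass to the limit via the regularity \eqref{apregularity}) is carried out. There is nothing to add.
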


\begin{lemma}\label{caccioppoli:T}
For any $\beta\ge 0$ and  all $\eta\in C^\infty_0(\Omega)$, we
have
\begin{equation*}
\int_\Omega \eta^2\weight | Tu|^\beta|\X (Tu)|^2\,
dx \le \frac{c}{(\beta+1)^2}\int_\Omega |\X\eta
|^2\weight| Tu|^{\beta+2}\, dx.
\end{equation*}
where $c=c(n,p,L)>0$.
\end{lemma}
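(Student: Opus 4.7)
The plan is to prove this as a standard Caccioppoli estimate, obtained by testing the equation satisfied by $Tu$ (Lemma \ref{ws:T}) against a suitable power of $Tu$ multiplied by a cutoff. The regularity stated in \trif{apregularity}, namely $Tu \in HW^{1,2}_{\loc}(\Omega) \cap L^\infty_{\loc}(\Omega)$, will let us carry the argument out without any approximation of the equation itself, since the coefficient matrix $A_{ij}(x) \bydef D_jD_i f(\Xu(x))$ is then bounded and measurable with ellipticity bounds
\[
\weight |\xi|^2 \le \sum_{i,j=1}^{2n} A_{ij}(x)\xi_i\xi_j \le L\weight|\xi|^2
\]
coming from \trif{structure}, and since $\eta$ is smooth with compact support.

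Concretely, I would take as test function $\varphi = \eta^2 |Tu|^\beta Tu$. Thanks to $Tu \in L^\infty_{\loc} \cap HW^{1,2}_{\loc}$ and the fact that $s \mapsto |s|^\beta s$ is $C^1$ on $\R$ for $\beta \ge 0$ (interpreting the case $\beta = 0$ as just $s$), we have $\varphi \in HW^{1,2}_0(\Omega)$ with
\[
X_i\varphi = 2\eta(X_i\eta)|Tu|^\beta Tu + (\beta+1)\eta^2 |Tu|^\beta X_i(Tu).
\]
If $\beta$ is taken non-integer and strict justification of the chain rule at $Tu=0$ is desired, I would instead use $\varphi_\epsilon = \eta^2 (Tu^2+\epsilon)^{\beta/2} Tu$ and pass to the limit $\epsilon \to 0$ via dominated convergence, but this is routine.

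Plugging $\varphi$ into the weak form of \trif{equation:T} and moving terms around gives
\[
(\beta+1)\int_\Omega \eta^2 |Tu|^\beta \langle A \X(Tu), \X(Tu)\rangle\, dx = -2\int_\Omega \eta |Tu|^\beta Tu \langle A \X(Tu), \X\eta\rangle\, dx.
\]
On the right, Cauchy--Schwarz for the nonnegative symmetric bilinear form induced by $A$, followed by the upper bound from \trif{structure}, yields
\[
|\langle A\X(Tu), \X\eta\rangle| \le \langle A\X(Tu),\X(Tu)\rangle^{1/2}\cdot \sqrt{L}\,\weight^{1/2}|\X\eta|.
\]
Then Young's inequality with a small parameter $\epsilon$ splits off a term $\epsilon \int \eta^2 |Tu|^\beta \langle A\X(Tu),\X(Tu)\rangle\, dx$, which I absorb into the left-hand side by choosing $\epsilon$ proportional to $\beta+1$; the remaining term is $c(L)(\beta+1)^{-1}\int |Tu|^{\beta+2}\weight|\X\eta|^2\,dx$. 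Finally applying the lower ellipticity bound $\langle A\X(Tu),\X(Tu)\rangle \ge \weight|\X(Tu)|^2$ on the left produces the claimed inequality with $c=c(n,p,L)$ and the exact $(\beta+1)^{-2}$ dependence.

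There is no real obstacle here; the only subtle point is making sure $|Tu|^\beta Tu$ is an admissible test function, which is immediate from \trif{apregularity} (and, if needed, the $\epsilon$-regularization above). The key ingredients are (i) that $Tu$ solves a linear divergence-form equation with the coefficient matrix $A_{ij}$ satisfying the two-sided bound with weight $\weight$, and (ii) $L^\infty_{\loc}$-boundedness of $Tu$ which guarantees that $|Tu|^{\beta+2}\weight$ is locally integrable on the support of $|\X\eta|^2$.
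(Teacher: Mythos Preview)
Your proof is correct and is exactly the standard Caccioppoli argument one expects here: test \trif{equation:T} with $\varphi=\eta^2|Tu|^\beta Tu$, use the two-sided ellipticity from \trif{structure}, and absorb via Young's inequality. The paper does not write out this proof but refers to \cite{Zh}, Lemma~3.3, where the same argument is used; your handling of the admissibility of the test function via \trif{apregularity} (and the optional $\epsilon$-regularisation) is the appropriate justification.
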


The following lemma is Corollary 3.2 of \cite{Zh}. It shows the integrability 
of $Tu$. It is critical for the proof of the H\"older continuity of the horizontal gradient of solutions $u$ in \cite{Zh}.   
\begin{lemma}\label{cor:Tu:high:original}
For any $\beta\ge 2$ and all non-negative $\eta\in C^\infty_0(\Omega)$, we have that
\[
\int_\Omega\eta^{\beta+2}\weight| Tu|^{\beta+2}\, dx \le  c(\beta)K^{\frac{\beta+2}{2}}
\int_{spt(\eta)}\big(\delta+\vert\X u\vert^2\big)^{\frac{p+\beta}{2}}\, dx,
\]
where $K=\| \X\eta\|_{L^\infty}^2+\|\eta
T\eta\|_{L^\infty}$ and  $c(\beta)>0$ depends on $n,p,L$
and $\beta$.
\end{lemma}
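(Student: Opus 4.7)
The plan is to prove the estimate by exploiting the commutator identity $T = [X_i, X_{n+i}]$ to trade a factor of $Tu$ for horizontal derivatives, and then to iterate the resulting recursive inequality to successively reduce the exponent on $|Tu|$.

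First, I would start from the left-hand side $\int_\Omega \eta^{\beta+2} \weight |Tu|^{\beta+2}\, dx$, write one factor $Tu = X_i(X_{n+i}u) - X_{n+i}(X_i u)$ for some fixed index $i$, and integrate by parts so that the $X_i$ (respectively $X_{n+i}$) derivative falls on $\eta^{\beta+2}\, \weight\, |Tu|^{\beta}\, Tu$. This produces three types of terms: (i) a ``cut-off term'' where the derivative hits $\eta^{\beta+2}$, giving $\|\X \eta\|_\infty$; (ii) a ``Hessian term'' where the derivative hits $\weight$, giving factors of the horizontal Hessian $\X \X u$; and (iii) a ``$Tu$-term'' where the derivative hits $|Tu|^{\beta+1}$, giving a factor of $\X(Tu)$.

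Second, I would estimate each type. The $\X(Tu)$ in (iii) is controlled directly by the Caccioppoli inequality for $Tu$ (Lemma \ref{caccioppoli:T}), which bounds $\int \eta^2 \weight |Tu|^\beta |\X(Tu)|^2\, dx$ at the price of raising the $|Tu|$ exponent by two. For the Hessian terms in (ii), I would combine the standard Caccioppoli inequality for $\X u$ (obtained by differentiating equation \eqref{equation:main}) with Young's inequality to split $|\X\X u|\cdot|\X u|\cdot|Tu|^{\beta+1}$ into a piece absorbed into $\int \eta^2 \weight |\X\X u|^2\, dx$ and a remainder of the form $\weight (\delta+|\X u|^2)|Tu|^{\beta}$ plus a lower-order piece. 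Term (i) is of shape $\|\X\eta\|_\infty \eta^{\beta+1} \weight |Tu|^{\beta+1}|\X u|$, and Young's inequality splits it into a small multiple of $\eta^{\beta+2} \weight |Tu|^{\beta+2}$ to be absorbed back on the left, plus a large multiple of $\|\X\eta\|_\infty^2 \eta^{\beta} \weight |Tu|^{\beta} (\delta+|\X u|^2)$. The term $\|\eta T\eta\|_\infty$ appears because after one integration by parts one of the derivatives can land on $T\eta$-type expressions arising from the left-invariant vector fields acting on products.

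Third, after absorbing the high-exponent $|Tu|^{\beta+2}$ pieces back into the left-hand side, what remains is a recursive inequality of the rough form
\[
\int_\Omega \eta^{\beta+2} \weight |Tu|^{\beta+2}\, dx \le c\, K \int_\Omega \eta^{\beta} \weight (\delta+|\X u|^2)\, |Tu|^{\beta}\, dx,
\]
valid for every $\beta \ge 0$ (after a small bootstrap to handle the base case $\beta = 0$, which uses only $\X u \in L^\infty_{\loc}$ and Lemma \ref{caccioppoli:T}). Iterating this inequality $(\beta+2)/2$ times lowers the exponent on $|Tu|$ by two at each step while picking up one factor of $K$ and one factor of $(\delta+|\X u|^2)$; when the $|Tu|$-exponent reaches zero, the resulting integrand is $(\delta+|\X u|^2)^{(p+\beta)/2}$, which yields exactly the claimed bound with constant $c(\beta) K^{(\beta+2)/2}$.

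The main obstacle is the tight coupling between $\X\X u$ and $Tu$ in the Heisenberg group: after the integration by parts, the new $\X\X u$ terms are themselves dangerous and must be controlled by a Caccioppoli for $\X u$ whose right-hand side again involves $Tu$. One must choose the exponents in Young's inequality carefully so that the bad $|Tu|^{\beta+2}$ contributions produced at each stage are either absorbed on the left or can be iterated down, rather than propagating indefinitely. It is exactly this bookkeeping, carried out in \cite{Zh}, that pins down the powers of $K$ and the dependence of $c(\beta)$ on $\beta$.
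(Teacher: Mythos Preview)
Your overall strategy---write one copy of $Tu$ as a commutator, integrate by parts, and iterate a recursive inequality---is indeed the engine of the proof in \cite{Zh}. But the proposal glosses over precisely the step that carries all the difficulty, and as written the recursion does not close.

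Concretely, after your integration by parts the Hessian term (ii) leaves you with a quantity of the type
\[
\int_\Omega \eta^{\beta+2}\weight |Tu|^{\beta}\,|\X\X u|^2\,dx,
\]
i.e.\ a Caccioppoli integral for the horizontal Hessian carrying a \emph{power of $|Tu|$} as weight. The ``standard Caccioppoli inequality for $\X u$'' you invoke (Lemma~3.4 of \cite{Zh}, cf.\ Lemma~\ref{caccioppoli:horizontal:sigma} here) allows only weights of the form $(\delta+|\X u|^2)^{\beta/2}$, not $|Tu|^\beta$; it therefore cannot absorb this term. Likewise, when you treat the $\X(Tu)$ term (iii) by Lemma~\ref{caccioppoli:T}, the output is
\[
cK\int_\Omega \eta^{\beta}\weight |Tu|^{\beta+2}\,dx,
\]
which has the \emph{same} power of $|Tu|$ as the left-hand side but two fewer powers of $\eta$; it is neither absorbable nor of the form of your claimed right-hand side. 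So the clean recursion you write down does not follow from the estimates you list.

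What the paper (via \cite{Zh}) actually uses as the second pillar is Lemma~3.5 of \cite{Zh} (see Lemma~\ref{caccioppoli:horizontal:T} here), a Caccioppoli inequality for $|\X\X u|^2$ that \emph{does} carry a $|Tu|^\beta$ weight:
\[
\int_\Omega \eta^{\beta+2}\weight|Tu|^{\beta}|\X\X u|^2\,dx
\le c(\beta+1)^2\|\X\eta\|_{L^\infty}^2
\int_\Omega \eta^{\beta}\weight(\delta+|\X u|^2)|Tu|^{\beta-2}|\X\X u|^2\,dx.
\]
This is exactly the recursion (in the Hessian quantity, trading $|Tu|^2$ for $K(\delta+|\X u|^2)$) that one iterates down; combined with Lemma~3.4 of \cite{Zh} at the bottom of the chain and the pointwise bound $|Tu|\le 2|\X\X u|$, it yields the lemma. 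Your closing paragraph correctly senses that the $\X\X u$--$Tu$ coupling is the obstacle, but the missing idea is that one needs this separate $|Tu|$-weighted Caccioppoli for the Hessian, not just the unweighted one together with Lemma~\ref{caccioppoli:T}.
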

In this paper, we need the following version of Lemma \ref{cor:Tu:high:original}, which is a bit stronger.
The reason that this stronger version holds is that we have a stronger structure condition (\ref{structure}) than that one (\ref{structureprime}) in \cite{Zh}.
\begin{lemma}\label{cor:Tu:high}
For any $\beta\ge 2$ and all non-negative $\eta\in C^\infty_0(\Omega)$, we have that
\[
\int_\Omega\eta^{\beta+2}\weight| Tu|^{\beta+2}\, dx \le c(\beta)K^{\frac{\beta+2}{2}}
\int_{spt(\eta)}\weight |\X u|^{\beta+2}\, dx,
\]
where $K=\| \X\eta\|_{L^\infty}^2+\|\eta
T\eta\|_{L^\infty}$ and  $c(\beta)>0$ depends on $n,p,L$
and $\beta$.
\end{lemma}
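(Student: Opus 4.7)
The plan is to adapt the proof of Corollary 3.2 of \cite{Zh} (Lemma \ref{cor:Tu:high:original}) by tracking the improvement in the pointwise bound on $Df$. Under \eqref{structure} we have $|Df(z)|\le L\weight|z|$, which is stronger than $|Df(z)|\le L(\delta+|z|^2)^{(p-1)/2}$ from \eqref{structureprime} by exactly one power of $|z|/(\delta+|z|^2)^{1/2}\le 1$. Since
\[
\weight|\Xu|^{\beta+2} = (\delta+|\Xu|^2)^{\frac{p+\beta}{2}}\Big(\frac{|\Xu|^2}{\delta+|\Xu|^2}\Big)^{\frac{\beta+2}{2}},
\]
the desired improvement in the right-hand side is of exactly the same nature, and the gain can be extracted from the argument of \cite{Zh} by substituting the sharper bound on $|Df|$ at each use of the Euler--Lagrange equation.

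Concretely, I would start from the commutator identity $Tu = X_iX_{n+i}u - X_{n+i}X_iu$, valid for every $i=1,\ldots, n$, and rewrite
\[
\int \eta^{\beta+2}\weight|Tu|^{\beta+2}\dx = \int \eta^{\beta+2}\weight|Tu|^\beta Tu\,(X_iX_{n+i}u - X_{n+i}X_iu)\dx.
\]
Integrating by parts in $X_i$ and $X_{n+i}$ (whose formal $L^2$-adjoints are $-X_i$ and $-X_{n+i}$ in $\hh^n$) yields three types of terms on the right: a cutoff term of order $|\X\eta|\weight|\Xu||Tu|^{\beta+1}$; a weight-derivative term bounded by $\weight|\X\X u||Tu|^{\beta+1}$, using $|\X\weight|\cdot|\Xu|\le c(p)\weight|\X\X u|$; and a $|\X Tu|$ term of order $\weight|\Xu||Tu|^\beta|\X Tu|$. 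Young's inequality with exponents $(\beta+2)/(\beta+1)$ and $\beta+2$ converts the cutoff term into an absorbable $\epsilon\int\eta^{\beta+2}\weight|Tu|^{\beta+2}\dx$ plus the desired remainder $c\|\X\eta\|_\infty^{\beta+2}\int\weight|\Xu|^{\beta+2}\dx$. The $|\X Tu|$ term is handled by Cauchy--Schwarz combined with Lemma \ref{caccioppoli:T}, and the $|\X\X u|$ term is handled by testing the equation for $X_lu$ (Lemma \ref{ws:horizontal}) against $\eta^{\beta+2}|Tu|^\beta X_lu$, producing a Caccioppoli inequality for $\Xu$ that trades $|\X\X u|^2$ for $\weight|Tu|^{\beta+2}$-type integrals which are again absorbable.

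The place where the improved condition \eqref{structure} is essential is the inhomogeneous term $T(D_{n+l}f(\Xu))$ appearing in \eqref{equation:horizontal}: after integration by parts in $T$, it contributes a quantity bounded by $|D_{n+l}f(\Xu)|\cdot|T\varphi|$, and invoking $|Df(z)|\le L\weight|z|$ at this step furnishes the extra factor of $|\Xu|$ that eventually shows up as the missing power in $|\Xu|^{\beta+2}$. The main obstacle is the bookkeeping in the singular range $1<p<2$, where $\weight$ is large for small $|\Xu|$: Young's inequality must be applied with care so that the $|\Xu|$ factor furnished by \eqref{structure} is not wasted by a naive bound $|\Xu|\le(\delta+|\Xu|^2)^{1/2}$. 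Once the base case is handled this way, the iteration and Sobolev bootstrap from \cite{Zh} transfer verbatim, and the estimate follows for all $\beta\ge 2$ with $c(\beta)=c(n,p,L,\beta)>0$.
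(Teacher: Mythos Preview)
Your approach is essentially the same as the paper's: adapt the proof of Corollary~3.2 in \cite{Zh} by invoking the sharper bound $|Df(z)|\le L\weight|z|$ from \eqref{structure} wherever \eqref{structureprime} was used. The paper organizes this modularly---it first records strengthened versions of Lemma~3.4 and Lemma~3.5 of \cite{Zh} (stated here as Lemma~\ref{caccioppoli:horizontal:sigma} and Lemma~\ref{caccioppoli:horizontal:T}), after which the original iteration goes through unchanged.

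One point you gloss over with ``transfer verbatim'' deserves emphasis: the paper singles out that the test function for the base Caccioppoli estimate (Lemma~3.4 of \cite{Zh}) must be changed from $\varphi=\eta^2(\delta+|\Xu|^2)^{\beta/2}X_lu$ to $\varphi=\eta^2|\Xu|^\beta X_lu$. It is this $|\Xu|^\beta$ weight in the test function---not only the pointwise $|Df|$ bound in the inhomogeneous term $T(D_{n+l}f(\Xu))$---that propagates the improvement through the iteration and produces $|\Xu|^{\beta+2}$ rather than $(\delta+|\Xu|^2)^{(\beta+2)/2}$ on the right-hand side. Your sketch locates the gain correctly in the $|Df|$ bound, but without also modifying the weight in the test function the iteration from \cite{Zh} would still terminate in powers of $(\delta+|\Xu|^2)$, not of $|\Xu|$.
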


The following corollary follows easily from Lemma \ref{caccioppoli:T} and Lemma
\ref{cor:Tu:high}.
\begin{corollary}\label{cor:TX}
For any $q\ge 4$ and all non-negative $\eta\in C^\infty_0(\Omega)$, we have
\begin{equation*}
\begin{aligned}
\int_\Omega \eta^{q+2}\weight |Tu|^{q-2}|\X(Tu)|^2\dx
\leq c(q)K^\frac{q+2}{2}\int_{spt(\eta)}\weight |\X u|^q\dx,
\end{aligned}
\end{equation*}
where $K=\| \X\eta\|_{L^\infty}^2+\|\eta
T\eta\|_{L^\infty}$ and $c(q)=c(n,p,L,q)>0$.
\end{corollary}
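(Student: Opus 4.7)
The plan is to combine the Caccioppoli-type estimate for $Tu$ in Lemma \ref{caccioppoli:T} with the integrability estimate for $Tu$ in Lemma \ref{cor:Tu:high}, with an appropriate power of $\eta$ inserted as the cut-off. The exponent $q+2$ on $\eta$ on the left-hand side, matched against $K^{(q+2)/2}$ on the right-hand side, strongly suggests this two-step scheme: use the Caccioppoli inequality to trade the derivative $|\X(Tu)|^2$ for one extra factor of $|Tu|^2$ (losing one factor of $K$ in the process), and then use Lemma \ref{cor:Tu:high} to convert the resulting pure $|Tu|^q$ integral into a $|\X u|^q$ integral (at the price of a further $K^{q/2}$).

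First, I would apply Lemma \ref{caccioppoli:T} with $\beta = q-2 \ge 0$, using the test cut-off $\eta^{(q+2)/2}$ in place of $\eta$. Since
\[
|\X(\eta^{(q+2)/2})|^2 = \tfrac{(q+2)^2}{4}\,\eta^{q}\,|\X\eta|^2 \le c(q)\,\|\X\eta\|_{L^\infty}^2\,\eta^{q} \le c(q)\,K\,\eta^{q},
\]
Lemma \ref{caccioppoli:T} immediately yields
\[
\int_\Omega \eta^{q+2}\weight |Tu|^{q-2}|\X(Tu)|^2\dx
\le c(q)\,K \int_\Omega \eta^{q}\weight |Tu|^{q}\dx.
\]

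Second, since $q\ge 4$ gives $q-2\ge 2$, I would apply Lemma \ref{cor:Tu:high} with $\beta = q-2$ (and the original cut-off $\eta$) to the remaining integral, obtaining
\[
\int_\Omega \eta^{q}\weight |Tu|^{q}\dx \le c(q)\,K^{q/2}\int_{\mathrm{spt}(\eta)}\weight |\X u|^{q}\dx.
\]
Multiplying the two estimates gives the factor $K \cdot K^{q/2} = K^{(q+2)/2}$ and the desired inequality. There is no genuine obstacle here; the only point requiring a little care is ensuring the exponent of $\eta$ introduced in the first step (namely $\eta^{q}$, coming from $|\X(\eta^{(q+2)/2})|^2$) is exactly the one required to invoke Lemma \ref{cor:Tu:high} with $\beta+2 = q$, and that the hypothesis $\beta\ge 2$ of that lemma is met, which is precisely the reason for the restriction $q\ge 4$ in the statement.
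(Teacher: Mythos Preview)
Your proposal is correct and matches the paper's approach exactly: the paper simply states that the corollary ``follows easily from Lemma \ref{caccioppoli:T} and Lemma \ref{cor:Tu:high},'' and your two-step scheme (apply Lemma \ref{caccioppoli:T} with cut-off $\eta^{(q+2)/2}$ and $\beta=q-2$, then Lemma \ref{cor:Tu:high} with $\beta=q-2$) is precisely how one fills in the details, including the observation that $q\ge 4$ is needed for the hypothesis $\beta\ge 2$ of Lemma \ref{cor:Tu:high}. The only minor technicality is that $\eta^{(q+2)/2}$ need not be $C^\infty$ when $(q+2)/2$ is not an integer, but since $(q+2)/2\ge 3$ the function is at least $C^2$ with compact support, which suffices for the Caccioppoli inequality (or one can approximate).
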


In the rest of this subsection, we comment on the proof of Lemma \ref{cor:Tu:high}. 
The proof of Lemma \ref{cor:Tu:high} is almost the same as that of Lemma \ref{cor:Tu:high:original} in \cite{Zh}; it requires only minor modifications. Lemma \ref{cor:Tu:high:original} follows from two lemmas, that is, Lemma 3.4, Lemma 3.5 in \cite{Zh}. To prove Lemma \ref{cor:Tu:high}, we need stronger versions of Lemma 3.4 and Lemma 3.5 of \cite{Zh}, which we state here. The following lemma is a stronger version of Lemma 3.4 of \cite{Zh}.

\begin{lemma}\label{caccioppoli:horizontal:sigma}
For any $\beta\ge 0$ and all $\eta\in C^\infty_0(\Omega)$, we have
\begin{equation*}
\begin{aligned}
\int_{\Omega}\eta^2\weight |\X u|^\beta|\X\X u|^2\, dx\le&
 c\int_\Omega (| \X \eta|^2+\eta| T\eta|)\weight |\X u|^{\beta+2}\, dx\\
+c&(\beta+1)^4\int_\Omega\eta^2\weight |\X u|^\beta| Tu|^2\, dx,
\end{aligned}
\end{equation*}
where $c=c(n,p,L)>0$.
\end{lemma}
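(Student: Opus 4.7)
The plan is to follow the argument of Lemma 3.4 of \cite{Zh}, exploiting the sharper bound $|Df(z)|\le L(\delta+|z|^2)^{(p-2)/2}|z|$ available under (\ref{structure}) to end up with the weight $\weight|\Xu|^{\beta+2}$ (rather than $(\delta+|\Xu|^2)^{(p+\beta)/2}$) on the right-hand side. Concretely, I would start from the weak formulations of the two equations of Lemma \ref{ws:horizontal} and test each with
$$\varphi_l=\eta^2\,v_l\,\deltaX^{\beta/2},\qquad l=1,\dots,2n,$$
where $v_l=X_l u$, then sum over $l$; the a priori regularity (\ref{apregularity}) justifies all of the forthcoming computations.

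The principal contribution
$$\sum_{i,j,l}\int \eta^2 D_jD_if(\Xu)\,X_jv_l\,X_iv_l\,\deltaX^{\beta/2}\dx$$
is bounded below, by the lower ellipticity inequality in (\ref{structure}) together with the trivial estimate $\deltaX^{\beta/2}\ge|\Xu|^\beta$, by a multiple of the quantity $\int\eta^2\weight|\Xu|^\beta|\X\Xu|^2\dx$ that we wish to control. The cross terms arising when $X_i$ falls on $\eta^2$ or on $\deltaX^{\beta/2}$ always carry an extra factor $|\X\Xu|$ and, by Young's inequality with a small parameter, can be absorbed into the principal term, producing on the right-hand side the integrand $|\X\eta|^2\weight|\Xu|^{\beta+2}$. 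The analogous handling of $\sum_{i,l}\int D_{n+l}D_i f(\Xu)\,Tu\,X_i\varphi_l\,\dx$ is routine: from $|D^2f|\le L\weight$, after expanding $X_i\varphi_l$ one finds $|Tu|$ paired either with $|\X\Xu|$ (absorbable into the principal term, producing the $(\beta+1)^4|Tu|^2$-term on the right) or with $|\X\eta|\,|\Xu|^{\beta+1}$, which goes directly into the $|\X\eta|^2\weight|\Xu|^{\beta+2}$-term.

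The main obstacle is the term $\sum_l\int D_{n+l}f(\Xu)\,T\varphi_l\,\dx$. I would expand $T\varphi_l$, use the commutation $TX_ku=X_kTu$ to rewrite each occurrence of $TX_ku$ as $X_k(Tu)$, and then integrate by parts in the $X_k$-direction so that no derivative of $Tu$ survives in the final integrand. At this stage $|Df(\Xu)|$ is bounded by $L\weight|\Xu|$, which is precisely the improvement furnished by (\ref{structure}) and the reason the present lemma is sharper than its counterpart in \cite{Zh}; combined with the cut-off derivatives it yields exactly the factors $\eta|T\eta|\weight|\Xu|^{\beta+2}$ and $|\X\eta|^2\weight|\Xu|^{\beta+2}$ needed on the right-hand side. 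The bookkeeping is the delicate point: the integrations by parts in this step produce several mixed terms containing $\X\Xu$ together with $Tu$ or $\X\eta$, and each must be split by Young's inequality so that the $\X\Xu$-factor reabsorbs into the principal integral on the left while the remaining factor goes into $\eta^2\weight|\Xu|^\beta|Tu|^2$ or into $(|\X\eta|^2+\eta|T\eta|)\weight|\Xu|^{\beta+2}$; careful counting of the resulting combinatorial constants shows that the coefficient of the $|Tu|^2$-term blows up at most like $(\beta+1)^4$, while the coefficient in front of the $|\X\eta|$/$|T\eta|$-term remains a universal $c$. All constants are independent of $\delta$. After collecting the surviving terms, the claimed inequality follows.
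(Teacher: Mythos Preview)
Your overall strategy is right, but the test function you propose is the wrong one and would not produce the sharper right-hand side claimed in the lemma. With
\[
\varphi_l=\eta^2\,(\delta+|\Xu|^2)^{\beta/2}\,X_lu
\]
every error term you generate carries the factor $(\delta+|\Xu|^2)^{\beta/2}$ rather than $|\Xu|^\beta$. For example, when $X_i$ lands on $\eta^2$ and you apply Young's inequality, the residual term is (up to constants)
\[
|\X\eta|^2\,\weight\,(\delta+|\Xu|^2)^{\beta/2}\,|\Xu|^2,
\]
and since $(\delta+|\Xu|^2)^{\beta/2}\ge|\Xu|^\beta$ with strict inequality whenever $\delta>0$, this quantity is \emph{not} dominated by $|\X\eta|^2\,\weight\,|\Xu|^{\beta+2}$. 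The same obstruction appears in the $\eta|T\eta|$-term and in the $|Tu|^2$-term. The improved bound $|Df(z)|\le L(\delta+|z|^2)^{(p-2)/2}|z|$ buys you one extra factor of $|\Xu|$, but it cannot convert the remaining $(\delta+|\Xu|^2)^{\beta/2}$ into $|\Xu|^\beta$. In short, your argument reproduces (a slight sharpening of) Lemma~3.4 of \cite{Zh}, not the present lemma.

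The paper addresses exactly this point: it keeps the proof of Lemma~3.4 of \cite{Zh} intact but replaces the test function by
\[
\varphi_l=\eta^2\,|\Xu|^\beta\,X_lu.
\]
With this choice every occurrence of the $\beta$-weight is already $|\Xu|^\beta$, so the error terms after Young's inequality are genuinely of the form $|\X\eta|^2\,\weight\,|\Xu|^{\beta+2}$, $\eta|T\eta|\,\weight\,|\Xu|^{\beta+2}$, and $\eta^2\,\weight\,|\Xu|^\beta|Tu|^2$. The principal contribution is still coercive because
\[
\sum_l X_jX_lu\,X_i\big(|\Xu|^\beta X_lu\big)
=|\Xu|^\beta\sum_l X_jX_lu\,X_iX_lu+\tfrac{\beta}{4}\,|\Xu|^{\beta-2}X_i(|\Xu|^2)X_j(|\Xu|^2),
\]
which, paired with $D_jD_if(\Xu)$, is bounded below by $\weight|\Xu|^\beta|\X\Xu|^2$. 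If you rerun your argument with this test function, the rest of your outline (the handling of the $T$-term via integration by parts, the Young splittings, and the $(\beta+1)^4$ bookkeeping) goes through exactly as you describe.
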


The proof of Lemma \ref{caccioppoli:horizontal:sigma} follows the same line as that of Lemma 3.4 of \cite{Zh} with minor modifications. To prove Lemma 3.4 of \cite{Zh}, one uses $\varphi=\eta^2\big(\delta+\vert\X u\vert^2\big)^{\beta/2}X_l u$ as a testing function for equations (\ref{equation:horizontal}) when
$l=1,2,...,n$ and for equation (\ref{equation:horizontal2}) when $l=n+1,n+2,...,2n$. Now, to prove Lemma \ref{caccioppoli:horizontal:sigma}, we use instead the testing function $\varphi=\eta^2\vert\X u\vert^\beta X_lu$. The proof then is the same as that of Lemma 3.4 of \cite{Zh} with obvious changes. To get through the proof, we remark that the structure condition (\ref{structure}) is essential. We omit the details of the proof of Lemma \ref{caccioppoli:horizontal:sigma}.  

The following lemma is a stronger version of Lemma 3.5 of \cite{Zh}. 
\begin{lemma}\label{caccioppoli:horizontal:T}
For any $\beta\ge 2$ and all non-negative $\eta\in C^\infty_0(\Omega)$, we have
\begin{equation*}
\begin{aligned}
\int_\Omega \eta^{\beta+2}&\weight| Tu|^{\beta}|\X\X u|^2\, dx\\
&\le
c(\beta+1)^2\|  \X\eta\| _{L^\infty}^2\int_\Omega\eta^\beta
\weight |\X u|^2  | Tu|^{\beta-2}| \X\Xu|^2\, dx,
\end{aligned}
\end{equation*}
where $c=c(n,p,L)>0$.
\end{lemma}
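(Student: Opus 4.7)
The plan is to test the equations \eqref{equation:horizontal} and \eqref{equation:horizontal2} of Lemma \ref{ws:horizontal} with
\[ \varphi_l = \eta^{\beta+2} |Tu|^\beta v_l, \qquad v_l = X_l u, \quad l = 1,\ldots,2n, \]
and to sum over $l$. The a priori regularity \eqref{apregularity} together with $\beta \ge 2$ makes $\varphi_l$ admissible (after standard approximation). Expanding
\[ X_i\varphi_l = \eta^{\beta+2}|Tu|^\beta X_i v_l + (\beta+2)\eta^{\beta+1} X_i\eta\, |Tu|^\beta v_l + \beta\eta^{\beta+2}|Tu|^{\beta-2}Tu\, X_i(Tu)\, v_l, \]
the principal term, combined with the ellipticity in \eqref{structure} and the identity $\sum_l|\X v_l|^2 = |\X\X u|^2$, yields precisely the LHS $\int_\Omega \eta^{\beta+2}\weight |Tu|^\beta |\X\X u|^2\, dx$ of the claimed inequality.

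The error terms split into three groups: (a) those with $X_i\eta$ factors; (b) those with $X_i(Tu)$ factors coming from differentiating $|Tu|^\beta$; and (c) those arising from integration by parts in $T$ of the last summands $T(D_{n+l}f(\X u))$. Using Cauchy--Schwarz in the $(i,j)$ indices with the upper bound in \eqref{structure} and Young's inequality with small parameter, the errors in (a) contribute an absorbable fraction of the LHS plus a term bounded by $c(\beta+1)^2\|\X\eta\|_{L^\infty}^2 \int_\Omega \eta^\beta \weight |\X u|^2 |Tu|^\beta\, dx$. The errors in (b), after applying the identity $\sum_l v_l X_j v_l = \tfrac12 X_j|\X u|^2$ and a Young splitting, contribute an absorbable fraction of the LHS plus the auxiliary integral
\[ B := \int_\Omega \eta^{\beta+2}\weight |\X u|^2 |Tu|^{\beta-2} |\X(Tu)|^2\, dx. \]

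To control $B$, I would test the equation \eqref{equation:T} for $Tu$ (Lemma \ref{ws:T}) with $\psi = \eta^{\beta+2} |\X u|^2 |Tu|^{\beta-2} Tu$: its principal term is (up to a constant) $B$, while its cross errors (from $X_i\eta$ and $X_i|\X u|^2$), treated as above via Young, are bounded by a small fraction of $B$, a small fraction of the original LHS, and $c(\beta+1)^2\|\X\eta\|_{L^\infty}^2 \int_\Omega \eta^\beta \weight |\X u|^2 |Tu|^\beta\, dx$. Solving the resulting coupled pair of inequalities for the LHS and $B$ by absorption yields
\[ \int_\Omega \eta^{\beta+2}\weight |Tu|^\beta|\X\X u|^2\, dx \leq c(\beta+1)^2 \|\X\eta\|_{L^\infty}^2 \int_\Omega \eta^\beta \weight |\X u|^2 |Tu|^\beta\, dx. \]
Finally, the Heisenberg commutator identity $[X_i,X_{n+i}] = T$ gives the pointwise bound $|Tu| \le 2|\X\X u|$, hence $|Tu|^\beta \le 4|Tu|^{\beta-2}|\X\X u|^2$ for $\beta \ge 2$; substituting this on the right-hand side above produces the claimed inequality.

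The main obstacle is the treatment of the errors in (c): a naive integration by parts in $T$ would produce $\|T\eta\|_{L^\infty}$ factors that are absent from the statement. These are eliminated by summing the contributions of \eqref{equation:horizontal} and \eqref{equation:horizontal2} for conjugate indices $l$ and $n+l$ and using $T\eta = X_l X_{n+l}\eta - X_{n+l} X_l \eta$: further horizontal integrations by parts transfer the derivatives off $\eta$ and leave only $\X\eta$ factors paired with horizontal second derivatives that slot into the same coupled scheme as (a)--(b). The residual $T(Tu)$ pieces coming from $T\varphi_l$ are likewise reduced by another application of the equation \eqref{equation:T} for $Tu$.
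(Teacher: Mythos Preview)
Your overall plan (test \eqref{equation:horizontal}--\eqref{equation:horizontal2} with $\varphi_l=\eta^{\beta+2}|Tu|^\beta X_lu$, sum in $l$, then invoke \eqref{equation:T} to control the $\X(Tu)$ contributions, and finish with $|Tu|\le c|\X\X u|$) is the right one, and it is the approach behind Lemma~3.5 of \cite{Zh} to which the paper defers. The final pointwise step $|Tu|^\beta\le c|Tu|^{\beta-2}|\X\X u|^2$ is fine.

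There is, however, a real gap in your ``solve the coupled pair by absorption'' step. Write $A=\int\eta^{\beta+2}\weight|Tu|^\beta|\X\X u|^2$ and $B=\int\eta^{\beta+2}\weight|\X u|^2|Tu|^{\beta-2}|\X(Tu)|^2$. Your error (b) is bounded by $c\beta\,A^{1/2}B^{1/2}$. In your auxiliary test of \eqref{equation:T} with $\psi=\eta^{\beta+2}|\X u|^2|Tu|^{\beta-2}Tu$, the principal term is $(\beta-1)$ times (a quantity $\ge B$), while the cross error from $X_i|\X u|^2$ has coefficient $O(1)$ and is bounded only by $c\,A^{1/2}B^{1/2}$. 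By Cauchy--Schwarz and Young you get $B^{1/2}\le \tfrac{c}{\beta-1}A^{1/2}+c\|\X\eta\|_\infty R^{1/2}$, and plugging this back into error (b) yields a contribution $\tfrac{c\beta}{\beta-1}A$, i.e.\ an $O(1)$ multiple of $A$ with constant depending on $L$. This \emph{cannot} be absorbed into the principal term. Your assertion that the $X_i|\X u|^2$ cross error contributes only ``a small fraction of the original LHS'' is therefore incorrect: one cannot make both Young pieces small simultaneously.

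The remedy used in \cite{Zh} is not a naive two-inequality decoupling. One exploits that, by symmetry of $D^2f$, the integrand in your error (b) and the $X_i|\X u|^2$ cross term in the $\psi$-test are \emph{the same} quantity, namely $\eta^{\beta+2}|Tu|^{\beta-2}Tu\,\langle D^2f(\X u)\,\X|\X u|^2,\X(Tu)\rangle$, appearing with coefficients $\beta/2$ and $1$ respectively. Adding a suitable multiple of the $\psi$-identity to the first test replaces error (b) exactly by $-\tfrac{\beta(\beta-1)}{2}\int\eta^{\beta+2}|\X u|^2|Tu|^{\beta-2}\langle D^2f\X(Tu),\X(Tu)\rangle\le 0$ plus an $\X\eta$ term, so the bad cross-coupling disappears by sign rather than by absorption. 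You should rework the argument along these lines. Also, your sketch for (c) (eliminating $T\eta$ and handling $T(Tu)$) needs to be carried out in detail; the horizontal integration-by-parts route $T=[X_k,X_{n+k}]$ used in the Appendix for $I_4^l$ is the right template, but the bookkeeping is not trivial and should be written out.
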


The proof of Lemma \ref{caccioppoli:horizontal:T} is almost the same as that of Lemma 3.5, with obvious minor changes. The only difference is that we use the structure condition (\ref{structure}) whenever the structure condition (\ref{structureprime}) is used in the proof of Lemma 3.5 in \cite{Zh}. We omit the details.

Once Lemma \ref{caccioppoli:horizontal:sigma} and Lemma \ref{caccioppoli:horizontal:T} are established, the proof of Lemma \ref{cor:Tu:high} is exactly the same as that of Lemma \ref{cor:Tu:high:original} in \cite{Zh}.

\section{Proof of the main lemma, Lemma \ref{lemma:main}}\label{proofoflemma}
Throughout this section, $u\in HW^{1,p}(\Omega)$ is a weak solution of equation (\ref{equation:main}) satisfying the structure condition (\ref{structure})
with $\delta>0$.
For any ball $B_r\subset \Omega$, we denote for $i=1,2,...,2n$,
\begin{equation}\label{def:mu}
\mu_i(r)=\sup_{{B_r}} \vert X_i u\vert, \quad \mu(r)=\max_{1\le i\le 2n}\mu_i(r).
\end{equation}
Now fix $l\in \{ 1,2,.., 2n\}$. We consider the following double truncation 
of $X_l u$ 
\begin{equation}\label{def:v}
v=\min\big(\mu(r)/8,\max(\mu(r)/4-X_lu,0)\big).
\end{equation}
We denote 
\begin{equation}\label{setE}
E=\{ x\in \Omega: \mu(r)/8< X_lu<\mu(r)/4\}.
\end{equation}
We note the following  trivial inequality, which we use several times in this section
\begin{equation}\label{comparable1}
\mu(r)/8\le |\Xu|\le (2n)^{1/2}\mu(r) \quad\text{in } E\cap B_r.
\end{equation}
It follows from the regularity results (\ref{apregularity}) that
\begin{equation}\label{reg:v} \X v\in L^2_\loc(\Omega;\R^{2n}), \quad Tv\in L^2_\loc(\Omega)
\end{equation}
and moreover
\begin{equation}\label{vis0}
\X v=\begin{cases}
-\X X_lu \ & \text{a.e. in } E;\\
0 &\text{a.e. in } \Omega\setminus E,
\end{cases}
\quad\quad 
T v=\begin{cases}
-T X_l u \ & \text{a.e. in } E;\\
0 &\text{a.e. in } \Omega\setminus E.
\end{cases}
\end{equation}

We note that the function 
\[ h(t)=\big(\delta+t^2\big)^{\frac{p-2}{2}} t^q\]
is non-decreasing on $[0,\infty)$ if $\delta\ge 0$ and $q\ge 0$ such that
$p+q-2\ge 0$. Thus we have the following inequality, which is used several times in this section 
\begin{equation}\label{trivial}
\big(\delta+\vert\X u\vert^2\big)^{\frac{p-2}{2}}\vert\X u\vert^q\le c(n,p,q)\big(\delta+\mu(r)^2\big)^{\frac{p-2}{2}} \mu(r)^q \quad \text{in }
B_r,
\end{equation}
where $c(n,p,q)=(2n)^{(q+p-2)/2}$ if $p\ge 2$ and $c(n,p,q)=(2n)^{q/2}$ if $1<p< 2$.

To prove Lemma \ref{lemma:main}, we need the following two lemmas. The first lemma is similar to Lemma 3.3 of \cite{Zh}. 
In this lemma, we prove a weighted Caccioppoli inequality for $\X u$ involving $v$. It has an
extra weight $\vert \X u\vert^2$, comparing to that in Lemma 3.3 of \cite {Zh}. 
This is essential for us to deal with the case $1<p<2$ when equation (\ref{equation:main}) is singular. 
The proof  is also similar to that of Lemma 3.3 of \cite{Zh}. It is standard, but lengthy. We give a detailed proof in the Appendix.
\begin{lemma}\label{lem:start}
Let $1<p<\infty$. For any $\beta\ge 0$ and all non-negative $\eta\in C^\infty_0(\Omega)$, we have that
\begin{equation}\label{ineq1}
\begin{aligned}
\int_\Omega \eta^{\beta+2}v^{\beta+2}&\weight |\X u|^2|\X\X u|^2\, dx\\
\le & \ c (\beta+2)^2\int_\Omega \eta^\beta \big(|\X \eta|^2+\eta| T\eta|\big)v^{\beta+2}
\weight |\X u|^4 \, dx\\
& +  c(\beta+2)^2\int_\Omega \eta^{\beta+2} v^ \beta\weight |\X u|^4| \X v|^2\, dx\\
& +
c\int_\Omega\eta^{\beta+2} v^{\beta+2}\weight |\X u|^2| Tu|^2\, dx,
\end{aligned}
\end{equation}
where $c=c(n,p,L)>0$.
\end{lemma}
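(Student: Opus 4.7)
The strategy adapts that of Lemma~3.3 in \cite{Zh} by multiplying the usual test function by $|\X u|^2$, which is precisely the source of the additional weight on the left of \eqref{ineq1}. For each $l=1,\ldots,2n$ I would use the equation for $v_l=X_lu$ from Lemma~\ref{ws:horizontal} (equation \eqref{equation:horizontal} when $l\le n$, and \eqref{equation:horizontal2} when $l>n$), tested against
\begin{equation*}
\varphi_l=\eta^{\beta+2}\,v^{\beta+2}\,X_l u\,|\X u|^2,
\end{equation*}
which is admissible by \eqref{apregularity} and Theorem~\ref{thm:lip}, and then sum the resulting identities over $l$.

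By the product rule, $X_i\varphi_l$ decomposes into four pieces, coming from differentiating $X_lu$, $|\X u|^2$, $v^{\beta+2}$, and $\eta^{\beta+2}$, respectively. Paired with the quadratic form $\sum_{i,j}D_jD_if\cdot X_jX_lu$ and summed over $l$, the piece from the derivative of $X_lu$ yields, by the ellipticity in \eqref{structure}, exactly $\int \eta^{\beta+2}v^{\beta+2}\weight|\X u|^2|\X\X u|^2\,dx$, which is the left-hand side of \eqref{ineq1}. The piece from the derivative of $|\X u|^2$ reassembles into $\tfrac12\,\eta^{\beta+2}v^{\beta+2}\sum_{i,j}D_jD_if\,(X_i|\X u|^2)(X_j|\X u|^2)\ge 0$ and is discarded by ellipticity. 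The pieces from the derivatives of $v^{\beta+2}$ and $\eta^{\beta+2}$, estimated via $|D^2f|\le L\weight$ from \eqref{structure}, Cauchy--Schwarz, and Young's inequality, are bounded by a small multiple of the LHS (absorbed) plus constant multiples of the first two RHS terms of \eqref{ineq1}.

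The remaining contributions from equations \eqref{equation:horizontal}--\eqref{equation:horizontal2} are the divergence $Tu$-term $\sum_i X_i(D_{n+l}D_if\cdot Tu)$ (and its sign-changed analog for $l>n$) and the $T$-derivative $T(D_{n+l}f)$ (resp.\ $-T(D_lf)$). The former, paired with $\varphi_l$, reduces by Cauchy--Schwarz and Young to a small multiple of the LHS plus a constant multiple of the third RHS term of \eqref{ineq1}. The latter becomes $-\int D_{n+l}f\cdot T\varphi_l\,dx$ after integration by parts in $T$; the awkward sub-term here involves $Tv$, which vanishes outside $E$ and equals $-X_{l_0}Tu$ on $E$. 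I would handle it using the identity $v^{\beta+1}Tv=\tfrac{1}{\beta+2}T(v^{\beta+2})$, integrating by parts once more in $T$ to move $T$ off $v^{\beta+2}$, and then exploiting the commutativity $[T,X_i]=0$ together with further integrations by parts in the horizontal directions to shift every appearing $X$-derivative of $Tu$ onto the smooth coefficients. The remaining pointwise expressions, bounded by $\weight\,|\X u|^3\,|\X\X u|\,|Tu|$ and $\weight\,|\X u|^2\,|Tu|^2$ times appropriate powers of $\eta$ and $v$, are split by Young's inequality into an absorbable LHS piece plus the third RHS term.

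The main obstacle is the bookkeeping: tracking the many terms produced by the product rule on the four-factor test function $\varphi_l$, and choosing absorption constants in Young's inequality so that only the three RHS forms of \eqref{ineq1} survive with the quadratic dependence $(\beta+2)^2$ preserved. This explains why the detailed computation is relegated to the Appendix rather than carried out in the body of the paper.
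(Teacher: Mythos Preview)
Your test function $\varphi_l=\eta^{\beta+2}v^{\beta+2}|\X u|^2X_lu$ and the overall structure (sum over $l$, absorb via ellipticity, Young's inequality on the $\X\eta$-, $\X v$- and $Tu$-divergence pieces) are exactly what the paper does. The gap is in your treatment of the term $I_4^l=\int T(D_{n+l}f(\X u))\,\varphi_l\,dx$. After one integration by parts in $T$ you write $I_4^l=-\int D_{n+l}f\,T\varphi_l$, and the sub-term carrying $T(v^{\beta+2})$ is then, by your plan, integrated by parts in $T$ once more. But this second $T$-integration by parts returns \emph{exactly} $I_4^l$ plus the negatives of the other two sub-terms: writing $\varphi_l=v^{\beta+2}\psi$ with $\psi=\eta^{\beta+2}|\X u|^2X_lu$, one has
\[
-\int D_{n+l}f\,T(v^{\beta+2})\,\psi
=\int T(D_{n+l}f\,\psi)\,v^{\beta+2}
=I_4^l+\int D_{n+l}f\,v^{\beta+2}\,T\psi,
\]
so the identity collapses to $I_4^l=I_4^l$ and yields no estimate. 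If instead you try to avoid this by writing $T(D_{n+l}f)=\sum_jD_jD_{n+l}f\cdot X_jTu$ and integrating by parts in $X_j$, the derivative lands on $D_jD_{n+l}f(\X u)$ and produces third derivatives of $f$, which are not assumed ($f\in C^2$ only).

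The paper circumvents both issues by never integrating by parts in $T$ directly. It writes $T=X_1X_{n+1}-X_{n+1}X_1$ and performs two successive \emph{horizontal} integrations by parts on $I_4^l$. Splitting $\varphi_l=v^{\beta+2}w$ with $w=\eta^{\beta+2}|\X u|^2X_lu$, the first horizontal IBP gives terms containing $X_i(D_{n+l}f)$ (only second derivatives of $f$) paired with $X_i\varphi_l$; the $\X v$-contributions are controlled directly, while on the remaining piece one integrates by parts horizontally again, producing further harmless $\X v$-terms and a residual $-\int D_{n+l}f\,v^{\beta+2}\,Tw$. Crucially, $T$ now acts on $w$ \emph{but not on} $v^{\beta+2}$: since $Tw$ involves only $T\eta$ and $X_iTu$, one final horizontal IBP moves $X_i$ off $Tu$ onto $D_{n+l}f$ (again only second derivatives of $f$) and the remaining smooth factors. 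This is the step your sketch is missing.
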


In the following is the second lemma that we need for the proof of Lemma \ref{lemma:main}, where we prove  
a weighted Caccioppoli inequality for $Tu$ involving $v$. It has a weight $\vert \X u\vert^4$, which is needed for us to deal with the case $1<p<2$. 
To state the lemma, 
we fix, throughout the rest of this section, a ball $B_r\subset\Omega$ and a cut-off function $\eta\in C^\infty_0(B_r)$ that satisfies 
\begin{equation}\label{etaBr1}
0\le \eta\le 1 \quad \text{in }B_r, \quad
\eta=1 \quad \text{in }B_{r/2}
\end{equation} 
and  
\begin{equation}\label{etaBr2} 
|\X \eta|\le 4/r,\quad |\X \X \eta|\le 16n/r^2, \quad | T\eta|\le 32n/r^2\quad \text{in }B_r.
\end{equation}

\begin{lemma}\label{lem:tech}
Let $B_r \subset \Om$ be a ball and $\eta\in C^\infty_0(B_r)$ be a cut-off function satisfying \eqref{etaBr1} and \eqref{etaBr2}. 
Let $\tau \in (1/2,1)$ and $\gamma\in(1,2)$ be two fixed numbers. 
Then, for any $\beta \geq0$, 
we have  
\begin{equation}\label{eq:tech}
 \begin{aligned}
  \int_\Omega \eta^{\tau(\beta+2)+4}\,&v^{\tau(\beta+4)} 
\weight |\X u|^4|\X (Tu)|^2\, dx\\
 &\leq c(\beta+2)^{2\tau}\frac{|B_r|^{1-\tau}}{r^{2(2-\tau)}}\big(\delta+\mu(r)^2\big)^{\frac{p-2}{2}}
 \mu(r)^6
 \,J^{\tau},
 \end{aligned}
\end{equation}
where $c = c(n,p,L,\tau,\gamma)>0$ and 
\begin{equation}\label{def:J}
J= \int_{B_r} \eta^{\beta+4}v^{\beta+2} |\X v|^2\dx 
 \ +\ \mu(r)^4\frac{|B_r|^{1-\frac{1}{\gamma}}}{r^2}
\Big(\int_{B_r} \eta^{\gamma\beta}v^{\gamma\beta}\dx\Big)^\frac{1}{\gamma}.
\end{equation}
\end{lemma}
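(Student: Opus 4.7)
The plan is to apply H\"older's inequality with conjugate exponents $1/\tau$ and $1/(1-\tau)$ to isolate the $J^\tau|B_r|^{1-\tau}$ structure on the right-hand side, and then bound the two resulting factors separately: one by a weighted Caccioppoli argument derived from the weak equation for $Tu$ (Lemma~\ref{ws:T}), and the other by elementary volume/pointwise estimates together with Lemmas~\ref{caccioppoli:T} and~\ref{cor:Tu:high}.

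Concretely, I decompose the integrand pointwise as $f^{\tau}g^{1-\tau}$ with
\[
f=\eta^{\beta+2}v^{\beta+4}\weight|\Xu|^2|\X(Tu)|^2,\qquad
g=\eta^{4/(1-\tau)}\weight|\Xu|^{(4-2\tau)/(1-\tau)}|\X(Tu)|^2,
\]
so that H\"older's inequality yields $\text{LHS}\le I_1^{\tau}\,I_2^{1-\tau}$ with $I_1=\int f\,dx$ and $I_2=\int g\,dx$. The factor $I_2$ carries no $v$-weight and no $|\X v|$, so I use \eqref{trivial} to bound $\weight|\Xu|^{(4-2\tau)/(1-\tau)}\le c(\delta+\mu(r)^2)^{(p-2)/2}\mu(r)^{(4-2\tau)/(1-\tau)}$ in $B_r$ and reduce $I_2$ to an estimate of $\int\eta^{4/(1-\tau)}|\X(Tu)|^2\,dx$. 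This residual integral is controlled by Lemma~\ref{caccioppoli:T} applied to the cutoff $\eta^{2/(1-\tau)}$ with $\beta=0$, followed by Lemma~\ref{cor:Tu:high} with $\beta=0$; after raising to the power $1-\tau$, this produces exactly the factor $|B_r|^{1-\tau}/r^{2(2-\tau)}$ together with the appropriate $\mu(r)$- and $(\delta+\mu(r)^2)^{(p-2)/2}$-prefactors.

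For $I_1$, I test the equation for $Tu$ from Lemma~\ref{ws:T} against (a smooth approximation of) $\varphi=\eta^{\beta+2}v^{\beta+4}|\Xu|^2\,Tu$. Ellipticity of $D^2f$ bounds the resulting expression below by $I_1$, and expanding $\X\varphi$ followed by Young's inequality with absorption produces three boundary terms $A_\eta$, $A_v$, and $A_{XX}$ coming respectively from the derivatives of $\eta^{\beta+2}$, $v^{\beta+4}$, and $|\Xu|^2$; each is of the form $\int(\text{cutoff factor})\,\weight|\Xu|^2|Tu|^2\,dx$ multiplied by $|\X\eta|^2$, $|\X v|^2$, or $|\X\Xu|^2$ respectively. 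The crucial observation \eqref{comparable1}---namely $|\Xu|\sim\mu(r)$ on $\operatorname{supp}(\X v)\subset E$---is used in $A_v$: the factor $|\Xu|^2\weight$ is pointwise comparable to $\mu(r)^2(\delta+\mu(r)^2)^{(p-2)/2}$ there, so after pulling these out and bounding the $|Tu|^2$ factor by Lemma~\ref{cor:Tu:high}, $A_v$ matches the first term of $J$. For $A_\eta$, I use $|\X\eta|\le 4/r$ and apply H\"older's inequality with exponent $\gamma$ on the $v$-integral (splitting $v^{\beta+4}$ appropriately into a $v^{\gamma\beta}$-piece and a remainder), combined with Lemma~\ref{cor:Tu:high} to control the $|Tu|^2$ factor; this recovers the second term of $J$ together with $\mu(r)^4$ and $|B_r|^{1-1/\gamma}/r^2$.

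The main obstacle is the Hessian contribution $A_{XX}=c\int\eta^{\beta+2}v^{\beta+4}\weight|\X\Xu|^2|Tu|^2\,dx$, which does not appear in the expression for $J$. To control it I first invoke Lemma~\ref{caccioppoli:horizontal:T} with parameter $\beta=2$, which trades the $|Tu|^2$ factor for a $|\Xu|^2$ factor at the cost of $\|\X\eta\|_\infty^2\le c/r^2$; the remaining $|\X\Xu|^2$-integral is then absorbed via Lemma~\ref{lem:start} with suitably chosen exponent $\beta$, which bounds it in terms of the $|\X v|^2$-integral appearing in $J$ together with boundary $|\X\eta|^2$- and $|T\eta|$-contributions and an additional $|Tu|^2$-integral of the same type as $A_\eta$ already handled. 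The careful bookkeeping of the factors $(\beta+2)^2$, $(\delta+\mu(r)^2)^{(p-2)/2}$, and $\mu(r)^6$ through this iteration, so that the final constants match the statement, is the most delicate part of the proof.
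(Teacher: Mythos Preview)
Your initial H\"older splitting $M\le I_1^\tau I_2^{1-\tau}$ is algebraically correct, but the subsequent treatment of both factors has genuine gaps.

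\textbf{The $A_v$ term cannot be closed.} After testing \eqref{equation:T} with $\varphi=\eta^{\beta+2}v^{\beta+4}|\Xu|^2\,Tu$ and absorbing via Young, you obtain
\[
A_v \sim (\beta+4)^2\int_E \eta^{\beta+2}v^{\beta+2}\weight|\Xu|^2\,|Tu|^2\,|\X v|^2\,dx.
\]
You then say that ``bounding the $|Tu|^2$ factor by Lemma~\ref{cor:Tu:high}'' matches this to the first term of $J$. But Lemma~\ref{cor:Tu:high} is an \emph{integral} estimate, not a pointwise one; there is no control of $|Tu|$ in $L^\infty$ independent of $\delta$. Since $|Tu|^2$ and $|\X v|^2$ sit in the same integral, any H\"older separation destroys the structure $\int\eta^{\beta+4}v^{\beta+2}|\X v|^2\,dx$ required by $J$. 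The same obstruction recurs in your plan for $A_{XX}$: Lemma~\ref{caccioppoli:horizontal:T} carries no $v$-weights, and inserting $v^{\beta+4}$ into its test function produces new $|\X v|$ terms of exactly the $A_v$ type. The paper avoids this trap by \emph{not} using Young with absorption on the cross terms. Instead it applies Cauchy--Schwarz so that the product $|Tu|\,|\X(Tu)|$ stays together inside an auxiliary integral $\tilde K$ (see \eqref{def:tilde K}), and then a second H\"older gives $\tilde K\le M^{(2\tau-1)/\tau}G^{(1-\tau)/\tau}$; this yields the self-referential bound $M\le cM^{(2\tau-1)/(2\tau)}G^{(1-\tau)/(2\tau)}I^{1/2}$, which solves to $M\le cG^{1-\tau}I^\tau$. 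The appearance of $|Tu|$-powers (not $|\Xu|$-powers) in $G$ is precisely what makes Corollary~\ref{cor:TX} applicable.

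\textbf{The $I_2$ estimate is also incomplete.} You invoke Lemma~\ref{cor:Tu:high} with $\beta=0$, but that lemma requires $\beta\ge 2$. Working around this by H\"older introduces a factor $\int_{B_r}\weight\,dx$, which for $1<p<2$ is only bounded by $\delta^{(p-2)/2}|B_r|$ and so forces $\delta$-dependence into the constant. The paper's quantity $G$ in \eqref{def:M} carries the factor $|Tu|^{2\tau/(1-\tau)}$, so Corollary~\ref{cor:TX} (with $q=2/(1-\tau)\ge 4$) applies directly, with no such loss.
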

\begin{proof}
We denote by $M$ the left hand side of \eqref{eq:tech} 
\begin{equation}\label{def:K}
 M=\ \int_\Omega \eta^{\tau(\beta+2)+4}\,v^{\tau(\beta+4)} 
\weight |\X u|^4|\X (Tu)|^2\, dx,
\end{equation}
where $1/2<\tau<1$. 
We use the following function
$$\varphi = \eta^{\tau(\beta+2)+4}\,v^{\tau(\beta+4)}|\X u|^4\, Tu$$ 
as a testing function for  equation 
\eqref{equation:T}. We obtain that 
\begin{equation}\label{est:Ki}
 \begin{aligned}
 &\int_\Omega \sum_{i,j=1}^{2n}\eta^{\tau(\beta+2)+4}\,v^{\tau(\beta+4)} 
|\X u|^4 D_jD_if(\X u)X_jTu\,X_iTu\, dx\\
=&-(\tau(\beta+2)+4)\int_\Omega \sum_{i,j=1}^{2n} \eta^{\tau(\beta+2)+3}\,v^{\tau(\beta+4)} 
|\X u|^4 Tu D_jD_if(\X u)X_jTu\,X_i\eta\dx\\
&-\tau(\beta+4)\int_\Omega \sum_{i,j=1}^{2n}\eta^{\tau(\beta+2)+4}
\,v^{\tau(\beta+4)-1} 
|\X u|^4 Tu D_jD_if(\X u)X_jTu\,X_i v\dx\\
&-4\int_\Omega \sum_{i,j,k=1}^{2n} \eta^{\tau(\beta+2)+4}\,v^{\tau(\beta+4)} 
|\X u|^2 X_ku Tu D_jD_if(\X u)X_jTu\,X_iX_ku\dx\\
=& K_1 +K_2+ K_3,
\end{aligned}
\end{equation}
where the integrals in the right hand side of \eqref{est:Ki} are denoted by
$K_1,K_2,K_3$ in order, respectively.
We estimate both sides of (\ref{est:Ki}) as follows. For the left hand side, 
we have by the structure condition \eqref{structure} that
\begin{equation}\label{est:K}
 \text{left of \eqref{est:Ki}} \geq \int_\Omega \eta^{\tau(\beta+2)+4}\,v^{\tau(\beta+4)} 
\weight |\X u|^4|\X (Tu)|^2\, dx = M.
\end{equation}

For the right hand side of (\ref{est:Ki}), we estimate each item $K_i, i=1,2,3$, one by one. To this end, 
we denote  
\begin{equation}\label{def:tilde K}
\tilde K = \int_\Om \eta^{(2\tau-1)(\beta+2)+6} 
\,v^{(2\tau-1)(\beta+4)} \weight |\X u|^4 |Tu|^2 |\X(Tu)|^2\dx.
\end{equation} 
First, we estimate $K_1$ by the structure condition \eqref{structure} and 
H\"older's inequality. We have
\begin{equation}\label{est:K1}
\begin{aligned}
\vert K_1\vert \leq & c(\beta+2)\int_\Omega \eta^{\tau(\beta+2)+3}\,v^{\tau(\beta+4)} 
\weight |\X u|^4|Tu||\X (Tu)||\X\eta|\, dx\\
\leq & c(\beta+2) \tilde K^\frac{1}{2} 
\Big(\int_\Om \eta^{\beta+2} v^{\beta+4} 
\weight |\X u|^4 |\X\eta|^2\dx\Big)^\frac{1}{2},
\end{aligned}
\end{equation}
where $c = c(n,p,L,\tau)>0$. 

Second, we estimate $K_2$ also by the structure condition \eqref{structure} and 
H\"older's inequality. We have 
\begin{equation}\label{est:K2}
\begin{aligned}
 \vert K_2\vert \leq & c(\beta+2)\int_\Omega \eta^{\tau(\beta+2)+4}\,v^{\tau(\beta+4)-1} 
\weight |\X u|^4|Tu||\X (Tu)||\X v|dx\\
 \leq & c(\beta+2) \tilde K^\frac{1}{2} 
\Big(\int_\Om \eta^{\beta+4} v^{\beta+2} 
\weight |\X u|^4 |\X v|^2\dx\Big)^\frac{1}{2}.
\end{aligned}
\end{equation}

Finally, we estimate $K_3$. In the following, the first inequality
follows from the
structure condition \eqref{structure}, the second from 
H\"older's inequality and the third from Lemma \ref{lem:start}. We have 
\begin{equation}\label{est:K3}
\begin{aligned}
\vert K_3\vert &\leq c\int_\Omega \eta^{\tau(\beta+2)+4}\,v^{\tau(\beta+4)} 
\weight |\X u|^3|Tu||\X (Tu)||\X\X u|\, dx\\
&\leq c \tilde K^\frac{1}{2} 
\Big(\int_\Om \eta^{\beta+4} v^{\beta+4} 
\weight |\X u|^2 |\X\X u|^2\dx\Big)^\frac{1}{2}\\
&\leq c\, \tilde K^\frac{1}{2}\, I^\frac{1}{2}, 
\end{aligned}
\end{equation}
where $I$ is the right hand side of \eqref{ineq1} in Lemma \ref{lem:start}
\begin{equation}\label{def:I}
\begin{aligned}
I=&  c (\beta+2)^2\int_\Omega \eta^{\beta+2}
v^{\beta+4}
\weight |\X u|^4\big(|\X \eta|^2+\eta| T\eta|\big)\, dx\\
& +  c(\beta+2)^2\int_\Omega \eta^{\beta+4} v^{\beta+2}\weight |\X u|^4
| \X v|^2\, dx\\
& +
c\int_\Omega\eta^{\beta+4} v^{\beta+4}\weight |\X u|^2 | Tu|^2\, dx. 
\end{aligned}
\end{equation}
and $c=c(n,p,L)>0$.
Notice that the integrals on the right hand side of \eqref{est:K1} and \eqref{est:K2} are both controlled from above by $I$. 
Hence, we can combine \eqref{est:K1}, \eqref{est:K2} and \eqref{est:K3} to obtain that 
\[\vert K_1\vert+\vert K_2\vert+\vert K_3\vert \leq\ c \tilde K^\frac{1}{2} I^\frac{1}{2}, 
\]
from which,  together with the estimate \eqref{est:K} for the left hand side of (\ref{est:Ki}), it follows that
\begin{equation}\label{est:K'}
M\le c {\tilde K}^\frac{1}{2} I^\frac{1}{2},
\end{equation}
where $c = c(n,p,L,\tau)>0$. 
Now, we estimate $\tilde K$ by H\"older's inequality as follows. 
\begin{equation}\label{est:tilde K}
 \begin{aligned}
  \tilde K\ \leq & \Big(\int_\Omega\eta^{\tau(\beta+2)+4}\,v^{\tau(\beta+4)} 
\weight |\X u|^4|\X (Tu)|^2\, dx\Big)^{\frac{2\tau-1}{\tau}}\\
& \times\Big(\int_\Omega\eta^{\frac{2\tau}{1-\tau}+4}
\weight |\X u|^4 |Tu|^{\frac{2\tau}{1-\tau}}
|\X(Tu)|^2\dx\Big)^\frac{1-\tau}{\tau}\\
=& M^{\frac{2\tau-1}{\tau}}\, G^\frac{1-\tau}{\tau}, 
 \end{aligned}
\end{equation}
where $M$ is as in \eqref{def:K} and we denote by $G$ the second integral on the right hand side of (\ref{est:tilde K})
\begin{equation}\label{def:M}
 G=\int_\Omega\eta^{\frac{2\tau}{1-\tau}+4}
\weight |\X u|^4 |Tu|^{\frac{2\tau}{1-\tau}}
|\X(Tu)|^2\dx.
\end{equation}
Now \eqref{est:tilde K} and  \eqref{est:K'} yield that
\begin{equation}\label{est:K''}
 M \leq c G^{1-\tau} I^{\tau},
\end{equation}
where $c = c(n,p,L,\tau)>0$. 
To estimate $K$, we estimate $G$ and $I$ from above. 
We estimate $G$ by Corollary \ref{cor:TX} with $q = 2/(1-\tau)$, and we  obtain
that
\begin{equation}\label{est:M}
\begin{aligned}
G\leq & c\mu(r)^4\int_\Omega \eta^{q+2} \weight | Tu|^{q-2}|\X(Tu)|^2\, dx\\
\le & \frac{c}{r^{q+2}}\mu(r)^4\int_{B_r}\weight |\X u|^q \, dx\\
 \le & \frac{c}{r^{q+2}}| B_r|\big(\delta+\mu(r)^2\big)^{\frac{p-2}{2}}
 \mu(r)^{q+4},
\end{aligned}
\end{equation}
where $c = c(n,p,L,\tau)>0$ and in the last inequality we used (\ref{trivial}). 

Now, we fix 
$1<\gamma<2$ and estimate each term of $I$ in \eqref{def:I} as follows. 
For the first term of $I$, we have by H\"older's inequality and (\ref{trivial}) that 
\begin{equation}\label{I1}
\begin{aligned}
\int_\Omega \eta^{\beta+2}
v^{\beta+4}
&\weight |\X u|^4\big(|\X \eta|^2+\eta| T\eta|\big)\, dx\\
& \leq\ \frac{c}{r^2}\big(\delta+\mu(r)^2\big)^{\frac{p-2}{2}}\mu(r)^8
|B_r|^{1-\frac{1}{\gamma}}
\Big(\int_{B_r} \eta^{\gamma\beta}v^{\gamma\beta}\dx\Big)^\frac{1}{\gamma}.
\end{aligned}
\end{equation}
For the second term of $I$, we have by (\ref{trivial}) that 
\begin{equation}\label{I2}
\begin{aligned}
\int_\Omega \eta^{\beta+4} v^{\beta+2}&\weight |\X u|^4
| \X v|^2\, dx\\
&\leq  c\big(\delta+\mu(r)^2\big)^{\frac{p-2}{2}}\mu(r)^4
\int_{B_r} \eta^{\beta+4} v^{\beta+2}| \X v|^2\, dx
\end{aligned}
\end{equation}
For the third term of $I$, we have that
\begin{equation}\label{I3}
\begin{aligned}
\int_\Omega 
\eta^{\beta+4} v^{\beta+4}
&\weight |\X u|^2|Tu|^2\dx\\
\ \leq\ & \Big(\int_\Om \eta^\frac{2\gamma}{\gamma-1}\weight |\X u|^2
|Tu|^\frac{2\gamma}{\gamma-1}\dx\Big)^{1-\frac{1}{\gamma}}\\
&\times\Big( \int_\Om \eta^{\gamma(\beta+2)}v^{\gamma(\beta+4)}
\weight |\X u|^2 \dx\Big)^\frac{1}{\gamma}\\
\ \leq\ & \frac{c}{r^2}\big(\delta+\mu(r)^2\big)^{\frac{p-2}{2}}\mu(r)^8
|B_r|^{1-\frac{1}{\gamma}}
\Big(\int_{B_r} \eta^{\gamma\beta}v^{\gamma\beta}\dx\Big)^\frac{1}{\gamma}
\end{aligned}
\end{equation}
where $c = c(n,p,L,\gamma)>0$. Here in the above inequalities, the first one follows from H\"older's inequality and the second from Lemma \ref{cor:Tu:high} 
and (\ref{trivial}).  
Therefore, the estimates for three items of $I$ above \eqref{I1}, \eqref{I2} and \eqref{I3} give us the following one for $I$
 \begin{equation}\label{est:I}
\begin{aligned}
I \leq c(\beta+2)^2 \big(\delta+\mu(r)^2\big)^{\frac{p-2}{2}}\mu(r)^4 J,
\end{aligned}
\end{equation}
where $J$ is defined as in \eqref{def:J}   
\begin{equation*}
 J = \int_{B_r} \eta^{\beta+4} v^{\beta+2} |\X v|^2\dx 
\ +\ \mu(r)^4\frac{|B_r|^{1-\frac{1}{\gamma}}}{r^2} 
\Big(\int_{B_r} \eta^{\gamma\beta}v^{\gamma\beta}\dx\Big)^\frac{1}{\gamma}.
\end{equation*}
Now from  the estimates \eqref{est:M} for $G$ and  
\eqref{est:I} for $I$, we obtain the desired estimate for $M$ by \eqref{est:K''}. Combing \eqref{est:M}, \eqref{est:I} and \eqref{est:K''}, we end up with 
\begin{equation}\label{est:K f}
M \leq c (\beta+2)^{2\tau}\frac{|B_r|^{1-\tau}}{r^{2(2-\tau)}}\big(\delta+\mu(r)^2\big)^{\frac{p-2}{2}}
 \mu(r)^6 J^{\tau},
\end{equation}
where $c = c(n,p,L,\tau,\gamma)>0$. This completes the proof. 
\end{proof}
Now we prove the main lemma, Lemma \ref{lemma:main}. 
We restate Lemma \ref{lemma:main} here.
\begin{lemma}\label{lemma:mainprime}
Let $\gamma>1$ be a number and for $B_r\subset\Omega$, $\eta\in C^\infty_0(B_r)$, be a cut-off function satisfying (\ref{etaBr1}) and (\ref{etaBr2}). We have the following Caccioppoli type inequality
 \begin{equation}\label{C_v1}
  \int_{B_r} \eta^{\beta+4}v^{\beta+2}|\X v|^2\, dx\ \leq\ c(\beta+2)^2\mu(r)^4
\frac{| B_r|^{1-1/\gamma}}{r^2}
  \Big(\int_{B_r}\eta^{\gamma\beta}v^{\gamma\beta}\, dx\Big)^{1/\gamma}
\end{equation}
for all $\beta\geq0$, where $c=c(n,p, L,\gamma)>0$.
\end{lemma}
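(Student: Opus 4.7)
I plan to test the equation for $v_l = X_l u$ --- that is, \eqref{equation:horizontal} if $1\le l\le n$, or \eqref{equation:horizontal2} if $n+1\le l\le 2n$ --- against the standard truncation test function $\varphi = \eta^{\beta+4}v^{\beta+3}$. Since $v$ is a double truncation of $X_l u$, the identities \eqref{vis0} give $\X v = -\X v_l$ and $Tv = -Tv_l$ on the set $E$ of \eqref{setE}, and both vanish on $\Omega\setminus E$; crucially, \eqref{comparable1} makes $|\X u|$ comparable to $\mu(r)$ on $E$. After expanding $\X\varphi$ and $T\varphi$, and integrating by parts the last term of the equation (moving the $T$-derivative onto $\varphi$), the ellipticity lower bound of \eqref{structure} produces on the left-hand side the main positive contribution
\[
(\beta+3)\int_E \eta^{\beta+4}v^{\beta+2}\weight|\X v|^2\dx,
\]
while the remaining pieces become error terms on the right.

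These errors split into three families: (a) standard cross terms carrying $\X\eta$ or $T\eta$, routinely controlled via \eqref{etaBr2} and Young's inequality; (b) integrals coming from the two $Tu$-contributions of the equation for $v_l$, containing $|Tu|$ and (after the integration by parts) $|\X(Tu)|$; and (c) a horizontal-Hessian term involving $|\X\X u|$ which appears when $D^2 f(\X u)$ is differentiated in the cross pieces. I would bound (b) and (c) by repeated H\"older and Cauchy--Schwarz splittings, designed so that every resulting factor matches exactly the left-hand side of one of the three tools we already have: Lemma \ref{cor:Tu:high} for the pure $|Tu|$-integrals, Lemma \ref{lem:start} for the $|\X\X u|$-integral, and Lemma \ref{lem:tech} for the $|\X(Tu)|^2$-integral. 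The inequalities \eqref{comparable1} and \eqref{trivial} then convert every weighted factor into explicit powers of $\mu(r)$ and $(\delta+\mu(r)^2)^{(p-2)/2}$, and the right-hand side takes, up to the prefactor $(\delta+\mu(r)^2)^{(p-2)/2}\mu(r)^4$, exactly the form of the quantity $J$ of \eqref{def:J} (this is essentially the content of \eqref{est:I}).

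The decisive ingredient is that Lemma \ref{lem:tech} is invoked with an exponent $\tau\in(1/2,1)$ strictly less than one, so the quantity $J$ appears raised to a power less than one. Young's inequality then yields, for any $\varepsilon>0$, a bound of the form $\varepsilon\int_{B_r}\eta^{\beta+4}v^{\beta+2}|\X v|^2\dx$ plus a $C_\varepsilon$-multiple of the claimed right-hand side of \eqref{C_v1}. To absorb the small term into the left-hand side, I first use \eqref{comparable1} to replace $\weight$ on $E$ by a constant multiple of $(\delta+\mu(r)^2)^{(p-2)/2}$: this is immediate from $|\X u|\ge\mu(r)/8$ when $p\ge 2$, and from $|\X u|^2\le 2n\mu(r)^2$ together with $(p-2)/2<0$ when $1<p<2$. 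The factors of $(\delta+\mu(r)^2)^{(p-2)/2}$ on the two sides then cancel, and division by $\beta+3$ yields \eqref{C_v1}.

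The main obstacle will be the detailed bookkeeping through families (b) and (c): recognizing each integral as the left-hand side of exactly one of the three auxiliary lemmas, and choosing the H\"older exponents --- in particular $\tau$ in Lemma \ref{lem:tech} --- so that the combined use of Young's inequality simultaneously produces the required $\mu(r)^4$-factor on the right and permits absorption of all residual copies of the left-hand side. A subtlety I expect to encounter is that off-$E$ contributions from the cross term $X_i\varphi = (\beta+4)\eta^{\beta+3}v^{\beta+3}X_i\eta$ on the region $\{X_l u\le\mu(r)/8\}$, where $v\equiv\mu(r)/8$ but $\X v_l$ need not vanish, must also be reduced to $J$-type quantities; this too is achieved by applying Cauchy--Schwarz to pair the off-$E$ $|\X\X u|$-factor with Lemma \ref{lem:start} while the remaining factor $v^{\beta+3}\le(\mu(r)/8)v^{\beta+2}$ feeds into $J$ through the $v^{\gamma\beta}$ integral on the right of \eqref{C_v1}.
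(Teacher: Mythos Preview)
Your overall strategy---testing \eqref{equation:horizontal} with $\varphi=\eta^{\beta+4}v^{\beta+3}$, isolating the coercive term on $E$ via \eqref{comparable1}, and controlling the $|Tu|$- and $|\X(Tu)|$-pieces through Lemma~\ref{cor:Tu:high} and Lemma~\ref{lem:tech} with $\tau<1$---matches the paper's. There is, however, a genuine gap in your treatment of the cross term
\[
(\beta+4)\int_\Omega\eta^{\beta+3}v^{\beta+3}\sum_{i,j}D_jD_if(\Xu)\,X_jX_lu\,X_i\eta\,dx,
\]
specifically its contribution on $\{X_lu\le\mu(r)/8\}$, where $v\equiv\mu(r)/8$ but $|\Xu|$ may be arbitrarily small. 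Your plan to pair this with Lemma~\ref{lem:start} does not close: that lemma controls $\int\eta^{\beta+2}v^{\beta+2}\weight|\Xu|^2|\X\X u|^2$, and the extra factor $|\Xu|^2$ cannot be dropped without a lower bound on $|\Xu|$. In the singular range $1<p<2$ the bare weight $\weight$ is unbounded as $|\Xu|\to 0$ (with $\delta\to 0$), so any Cauchy--Schwarz splitting that leaves $\weight$ unaccompanied by a positive power of $|\Xu|$ yields a $\delta$-dependent constant, which is not allowed.

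The paper avoids this problem by a commutator-plus-integration-by-parts device that your proposal does not mention. One adds to the above cross term its companion $(\beta+4)\int\eta^{\beta+3}v^{\beta+3}\sum_iD_{n+l}D_if(\Xu)\,Tu\,X_i\eta$ (coming from the second piece of \eqref{equation:horizontal}); since $X_jX_lu=X_lX_ju-\delta_{j,n+l}Tu$ for $l\le n$, their sum collapses to
\[
(\beta+4)\int_\Omega\eta^{\beta+3}v^{\beta+3}\sum_i X_l\big(D_if(\Xu)\big)X_i\eta\,dx,
\]
and one now integrates by parts in $X_l$. After this only $D_if(\Xu)$ survives, bounded via \eqref{structure} by $\weight|\Xu|$, so the singular weight is automatically paired with $|\Xu|$ and \eqref{trivial} applies; all second horizontal derivatives of $u$ have disappeared, and the remaining factors are $|\X\eta|^2$, $|\X\X\eta|$, or $X_lv$ (the last supported on $E$). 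Thus Lemma~\ref{lem:start} is never called directly in the proof of Lemma~\ref{lemma:mainprime}; it enters only through Lemma~\ref{lem:tech}, which you correctly plan to apply to the $Tv$-piece arising from $T\varphi$.
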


\begin{proof} We note that we may assume that $\gamma<3/2$, since otherwise we can apply H\"older's inequality to the integral in the right hand side of the claimed inequality (\ref{C_v1}). So, we fix $ 1<\gamma < 3/2$.  
Recall that
\[v=\min\big(\mu(r)/8,\max(\mu(r)/4-X_lu,0)\big),\]
where $l\in\{ 1,2,...,2n\}$. We only prove the lemma for $l\in\{ 1,2,...,n\}$; we can prove the lemma similarly for $l\in \{ n+1,n+2,...,2n\}$. 
Now fix $l\in \{ 1,2,...,n\}$. Let $\beta\ge 0$ and $\eta\in C^\infty_0(B_r)$ be a cut-off function satisfying (\ref{etaBr1}) and (\ref{etaBr2}). We use 
\[ \varphi=\eta^{\beta+4}v^{\beta+3} \]
as a test function for equation (\ref{equation:horizontal}) to obtain that
\begin{equation}\label{lemest1}
\begin{aligned}
-\int_\Omega \sum_{i,j=1}^{2n} D_jD_if(\X u)X_jX_luX_i\varphi\, dx= & \int_\Omega \sum_{i=1}^{2n} D_{n+l}D_if(\X u)Tu X_i\varphi\, dx\\
& -\int_\Omega T\big( D_{n+l}f(\X u)\big) \varphi\, dx.
\end{aligned}
\end{equation}
Note that 
$$X_i \varphi=(\beta+3)\eta^{\beta+4}v^{\beta+2} X_i v+(\beta+4)\eta^{\beta+3}v^{\beta+3}X_i \eta.$$ 
Thus (\ref{lemest1}) becomes
\begin{equation}\label{lemest2}
\begin{aligned}
-(\beta+3)&\int_\Omega \sum_{i,j=1}^{2n}\eta^{\beta+4}v^{\beta+2} D_jD_if(\X u)X_jX_luX_i v\, dx\\
\ =\ & (\beta+4)\int_\Omega \sum_{i,j=1}^{2n} \eta^{\beta+3}v^{\beta+3} D_jD_if(\X u) X_jX_luX_i\eta\, dx\\
& + (\beta+4)\int_\Omega\sum_{i=1}^{2n} \eta^{\beta+3}v^{\beta+3} D_{n+l}D_if(\X u)Tu X_i\eta\, dx\\
&+(\beta+3)\int_\Omega \sum_{i=1}^{2n} \eta^{\beta+4} v^{\beta+2} D_{n+l}D_if(\X u)X_i v \,Tu\, dx\\
&-\int_\Omega \eta^{\beta+4}v^{\beta+3}\,T\big( D_{n+l}f(\X u)\big)\, dx.
\end{aligned}
\end{equation}
Note that
\[ X_jX_l-X_lX_j=0, \quad \text{ if } j\neq n+l,\]
and that
\[ X_{n+l}X_l-X_lX_{n+l}=-T.\]
Therefore we have
\begin{equation*}
\begin{aligned}
&\sum_{i,j=1}^{2n}D_jD_if(\X u) X_jX_luX_i\eta+ \sum_{i=1}^{2n} D_{n+l}D_if(\X u)Tu X_i\eta\\ 
=&\sum_{i,j=1}^{2n}D_jD_if(\X u) X_lX_juX_i\eta=\sum_{i=1}^{2n} X_l\big(D_if(\X u)\big)X_i\eta.
\end{aligned}
\end{equation*}
Now we can combine the first two integrals in the right hand side of (\ref{lemest2}) by the above equality. Then (\ref{lemest2}) becomes
\begin{equation}\label{lemest3}
\begin{aligned}
-(\beta+3)&\int_\Omega \sum_{i,j=1}^{2n}\eta^{\beta+4}v^{\beta+2} D_jD_if(\X u)X_jX_luX_i v\, dx\\
= \ 
& (\beta+4)\int_\Omega\sum_{i=1}^{2n} \eta^{\beta+3}v^{\beta+3} X_l\big(D_if(\X u)\big)X_i\eta \, dx\\
&+(\beta+3)\int_\Omega \sum_{i=1}^{2n} \eta^{\beta+4} v^{\beta+2} D_{n+l}D_if(\X u)X_ivTu\, dx\\
& -\int_\Omega \eta^{\beta+4}v^{\beta+3}T\big( D_{n+l}f(\X u)\big)\, dx\\
=\  &\, I_1+I_2+I_3.
\end{aligned}
\end{equation}
Here we denote the terms in the right hand side of (\ref{lemest3}) by $I_1,I_2,I_3$, respectively. 

We will estimate both sides of (\ref{lemest3}) as follows. For the left hand side, we have by the structure condition (\ref{structure}) that
\begin{equation}\label{est:lemleft}
\begin{aligned}
\text{left of } (\ref{lemest3}) & \ge (\beta+3)\int_\Omega \eta^{\beta+4}
v^{\beta+2}\weight |\X v|^2\, dx\\
&\ \ge\ c_0(\beta+2)\big(\delta+\mu(r)^2\big)^{\frac{p-2}{2}}\int_{B_r}\eta^{\beta+4}v^{\beta+2}|\X v|^2\, dx,
\end{aligned}
\end{equation}
where $c_0=c_0(n,p, L)>0$. Here we used \eqref{vis0} and \eqref{comparable1}.

For the right hand side of (\ref{lemest3}), we claim that
each item $I_1,I_2,I_3$ satisfies the following estimate 
\begin{equation}\label{lem:claim}
\begin{aligned}
| I_m|\ \le &\ \frac{c_0}{6}(\beta+2)\big(\delta+\mu(r)^2\big)^{\frac{p-2}{2}}\int_{B_r}\eta^{\beta+4}v^{\beta+2}|\X v|^2\, dx\\
&\ +c(\beta+2)^3
\frac{| B_r|^{1-1/\gamma}}{r^2}
  \big(\delta+\mu(r)^2\big)^{\frac{p-2}{2}}\mu(r)^4\Big(\int_{B_r}\eta^{\gamma\beta}v^{\gamma\beta}\, dx\Big)^{1/\gamma},
\end{aligned}
\end{equation}
where $m=1,2,3$, $1<\gamma<3/2$ and  $c$  is a constant depending only on $n,p,L$ and $\gamma$.
Then the lemma follows from the estimate (\ref{est:lemleft}) for the
left hand side of (\ref{lemest3}) and the above claim
(\ref{lem:claim}) for each item in the right. This completes the proof of the lemma, modulo the proof of the claim (\ref{lem:claim}). 

In the rest of the proof, we estimate
$I_1, I_2,I_3$ one by one. 
First, for $I_1$, we have by integration by parts that
\[ I_1=-(\beta+4)\int_\Omega\sum_{i=1}^{2n} D_if(\X u)X_l\big( \eta^{\beta+3}
v^{\beta+3}X_i\eta\big)\, dx,\]
from which it follows by the structure condition (\ref{structure}) that
\begin{equation}\label{lemest4}
\begin{aligned}
| I_1|\le &\, c(\beta+2)^2\int_\Omega \eta^{\beta+2} v^{\beta+3}\weight |\X u|\big(|\X\eta|^2+\eta|\X\X\eta|\big)\, dx\\
&+ c(\beta+2)^2\int_\Omega \eta^{\beta+3}v^{\beta+2}\weight |\X u||\X v\|\X\eta|\, dx\\
\le & \, \frac{c}{r^2}(\beta+2)^2\big(\delta+\mu(r)^2\big)^{\frac{p-2}{2}}\mu(r)^4\int_{B_r} \eta^\beta
v^\beta\, dx\\
&\, +\frac{c}{r}(\beta+2)^2\big(\delta+\mu(r)^2\big)^{\frac{p-2}{2}}
\mu(r)^2\int_{B_r}\eta^{\beta+2}v^{\beta+1}|\X v|\, dx,
\end{aligned}
\end{equation}
where $c=c(n,p,L)>0$. Here the second inequality follows from (\ref{trivial}), from the definitions of $\mu(r)$ and $v$, and from the factor that the support of $\eta$ lies in $B_r$. 
Now we apply Young's inequality to the last term of inequality (\ref{lemest4}) to end up with the following estimate for $I_1$.
\begin{equation}\label{lemest:I1}
\begin{aligned}
| I_1|\le &\, \frac{c_0}{6} (\beta+2)\big(\delta+\mu(r)^2\big)^{\frac{p-2}{2}}
\int_{B_r}\eta^{\beta+4}v^{\beta+2}|\X v|^2\, dx\\
&\, +\frac{c}{r^2}(\beta+2)^3\big(\delta+\mu(r)^2\big)^{\frac{p-2}{2}}\mu(r)^4\int_{B_r}\eta^\beta v^\beta\, dx,
\end{aligned}
\end{equation}
where $c=c(n,p,L)>0$ and $c_0$ is the same constant as in (\ref{est:lemleft}).
Now the claimed estimate (\ref{lem:claim}) for $I_1$ follows from the above estimate (\ref{lemest:I1}) and H\"older's inequality. 
 
Second, to estimate $I_2$, we have by the structure condition (\ref{structure}) that
\[ | I_2| \le c(\beta+2) \int_\Omega \eta^{\beta+4} v^{\beta+2} \weight 
| \X v| | Tu|\, dx,
\]
from which it follows by H\"older's inequality that
\begin{equation}\label{lemest5}
\begin{aligned}
| I_2| \le  &\, c(\beta+2) \Big(\int_E \eta^{\beta+4}v^{\beta+2}\weight|\X v|^2\, dx\Big)^{\frac{1}{2}}\\
&\qquad\quad \times \Big(\int_E \eta^{\gamma(\beta+2)}v^{\gamma(\beta+2)}\weight \, dx\Big)^{\frac{1}{2\gamma}}\\
& \qquad \quad \times\Big(\int_\Omega \eta^q \weight| Tu|^q\, dx\Big)^{\frac{1}{q}},
\end{aligned}
\end{equation}
where $q=2\gamma/(\gamma-1)$. 
Here we used (\ref{vis0}) so that in the second integral we can put the integration domain to be the set $E$, defined as in (\ref{setE}). This is critical, otherwise 
we would not have estimate for this integral and for the first integral in the case $1<p<2$. But now in set 
$E$ we have (\ref{comparable1}), and we have the following estimates for these two integrals for the full range $1<p<\infty$.
\begin{align}
\label{lemest6}
\int_E \eta^{\beta+4}v^{\beta+2}\weight|\X v|^2\, dx
 \le c \big(\delta+\mu(r)^2\big)^{\frac{p-2}{2}} \int_{B_r} \eta^{\beta+4}
v^{\beta+2}|\X v|^2\, dx, 
\end{align}
and 
\begin{align}
\label{lemest7}
\int_E \eta^{\gamma(\beta+2)}v^{\gamma(\beta+2)}\weight \, dx\le c \big(\delta+\mu(r)^2\big)^{\frac{p-2}{2}}\mu(r)^{2\gamma}\int_{B_r} \eta^{\gamma\beta} v^{\gamma\beta}\, dx,
\end{align}
where $c=c(n,p)>0$. We estimate the last integral in the right hand side of 
(\ref{lemest5}) by Lemma \ref{cor:Tu:high}. We have
\begin{equation}\label{lemest8}
\begin{aligned}
\int_\Omega \eta^q \weight| Tu|^q\, dx& \le \frac{c}{r^q}\int_{B_r}\weight |\X u|^q\, dx\\
&\le \frac{c| B_r|}{r^q}\big(\delta+\mu(r)^2\big)^{\frac{p-2}{2}}\mu(r)^q,
\end{aligned}
\end{equation}
where $c=c(n,p,L, \gamma)>0$. Here we used (\ref{trivial}) again.  Now combining the above three estimates (\ref{lemest6}), (\ref{lemest7}) and (\ref{lemest8}) for the three integrals in (\ref{lemest5}) respectively, we end up with the following estimate for $I_2$ 
\begin{equation*}
| I_2| \le c(\beta+2)\frac{| B_r|^{\frac{1}{q}}}{r}\big(\delta+\mu(r)^2\big)^{\frac{p-2}{2}}
\mu(r)^2\Big(\int_{B_r}\eta^{\beta+4}v^{\beta+2}|\X v|^2\, dx\Big)^{\frac{1}{2}}
\Big(\int_{B_r}\eta^{\gamma\beta}v^{\gamma\beta}\, dx\Big)^{\frac{1}{2\gamma}},
\end{equation*}
from which, together with Young's inequality, the claim 
(\ref{lem:claim}) for $I_2$ follows.

Finally, we prove (\ref{lem:claim}) for $I_3$.  Recall that
\[ I_3=-\int_\Omega\eta^{\beta+4}v^{\beta+3}T\big(D_{n+l}f(\X u)\big)\, dx.\]
Due to the regularity (\ref{reg:v}) for $v$,  integration by parts yields
\begin{equation}\label{lemest10}
\begin{aligned}
I_3 =&\, \int_\Omega D_{n+l}f(\X u) T\big(\eta^{\beta+4}v^{\beta+3}\big)\, dx\\
=&\, (\beta+4)\int_\Omega \eta^{\beta+3}v^{\beta+3} D_{n+l}f(\X u)T\eta\, dx\\
&\, +(\beta+3)\int_\Omega \eta^{\beta+4}v^{\beta+2} D_{n+l}f(\X u)Tv\, dx
= I_3^1+I_3^2,
\end{aligned}
\end{equation}
where we denote the last two integrals in the above equality by $I_3^1$ and $I_3^2$, respectively. The estimate for $I_3^1$ is easy. By the structure condition (\ref{structure}) and by (\ref{trivial}), we have
\begin{equation}\label{est:I31}
\begin{aligned}
| I_3^1|\le & \, c(\beta+2) \int_\Omega \eta^{\beta+3}v^{\beta+3}
\weight |\X u|| T\eta|\, dx\\
\le & \,\frac{c}{r^2}\big(\delta+\mu(r)^2\big)^{\frac{p-2}{2}}\mu(r)^4\int_{B_r}\eta^\beta
v^\beta\, dx.
\end{aligned}
\end{equation}
Thus by H\"older's inequality, $I_3^1$ satisfies estimate (\ref{lem:claim}). Now we  estimate $I_3^2$. We note that by (\ref{vis0}) and the 
structure condition (\ref{structure}) we have
\begin{equation}\label{lemest11}
| I_3^2|\le c(\beta+2)\int_E \eta^{\beta+4}v^{\beta+2}\weight |\X u|| \X (Tu)|\, dx,
\end{equation}
where the set $E$ is 
\[ E=\{ x\in \Omega: \mu(r)/8< X_lu<\mu(r)/4\},\]
defined as in (\ref{setE}). We continue to estimate $I_3^2$ by H\"older's inequality
\[\begin{aligned} | I_3^2|\le c(\beta+2)&\Big(\int_E \eta^{(2-\gamma)(\beta+2)+4}v^{(2-\gamma)(\beta+4)} \weight |\X u|^2|\X (Tu)|^2\, dx\Big)^{\frac 1 2}\\
& \times \Big(\int_E\eta^{\gamma(\beta+2)}v^{\gamma\beta+4(\gamma-1)}\weight\, dx\Big)^{\frac 1 2}.\end{aligned}\]
We remark that in set $E$ we have (\ref{comparable1}). Thus 
\begin{equation}\label{est:I32}
| I_3^2| \le c(\beta+2)\big(\delta+\mu(r)^2\big)^{\frac{p-2}{4}}\mu(r)^{2(\gamma-1)-1} M^{\frac 1 2}\Big(\int_{B_r} \eta^{\gamma\beta}
v^{\gamma\beta}\, dx\Big)^{\frac 1 2},
\end{equation}
where 
\begin{equation}\label{def:K new}
 M= \int_\Omega \eta^{(2-\gamma)(\beta+2)+4}\,v^{(2-\gamma)(\beta+4)} 
\weight |\X u|^4|\X (Tu)|^2\, dx.
\end{equation}
Now we are in a position to apply Lemma \ref{lem:tech} to estimate 
$M$ from above. 
Lemma \ref{lem:tech}
with $\tau = 2-\gamma$ gives us that
\begin{equation}\label{est:K final}
M \leq c (\beta+2)^{2(2-\gamma)}\frac{|B_r|^{\gamma-1}}{r^{2\gamma}}\big(\delta+\mu(r)^2\big)^{\frac{p-2}{2}}
 \mu(r)^6\,J^{2-\gamma}
\end{equation}
where $c = c(n,p,L,\gamma)>0$ and
$J$ is defined  as in  \eqref{def:J}     
\begin{equation}\label{def:J new}
 J = \int_{B_r} \eta^{\beta+4}v^{\beta+2} |\X v|^2\dx 
+ \mu(r)^4\frac{|B_r|^{1-\frac{1}{\gamma}}}{r^2} 
\Big(\int_{B_r} \eta^{\gamma\beta}v^{\gamma\beta}\dx\Big)^\frac{1}{\gamma}.
\end{equation}
Now, it follows from \eqref{est:K final} and  \eqref{est:I32} that 
$$ |I^2_3|\leq c(\beta+2)^{3-\gamma} 
\big(\delta+\mu(r)^2\big)^{\frac{p-2}{2}}\mu(r)^{2\gamma}\,
\frac{|B_r|^\frac{\gamma-1}{2}}{r^\gamma}\,
J^\frac{2-\gamma}{2}
\Big(\int_{B_r} 
\eta^{\gamma\beta}v^{\gamma\beta}\dx\Big)^\frac{1}{2}.$$
By Young's inequality, we end up with
\begin{equation*}
\begin{aligned}
|I^2_3|\leq & \frac{c_0}{12}(\beta+2)\big(\delta+\mu(r)^2\big)^{\frac{p-2}{2}} J\\
 & + c\,(\beta+2)^{\frac{4}{\gamma}-1}
 \big(\delta+\mu(r)^2\big)^{\frac{p-2}{2}}\mu(r)^4
\frac{|B_r|^{1-\frac{1}{\gamma}}}{r^2}
\Big(\int_{B_r} \eta^{\gamma\beta}v^{\gamma\beta}\dx\Big)^\frac{1}{\gamma},
\end{aligned}
\end{equation*}
where $c_0>0$ is the same constant as in \eqref{lem:claim}. 
Note that $ J$ is defined in \eqref{def:J new}. Thus $I_3^2$ satisfies 
a similar estimate to \eqref{lem:claim}. Now the desired claim \eqref{lem:claim}
for $I_3$ follows, since both $I_3^1$ and $I_3^2$ satisfy similar estimates.  
This concludes the proof of the claim \eqref{lem:claim}, and hence the proof of the lemma.
\end{proof}

\begin{remark}\label{remark1}
We can prove in the same way as that of Lemma \ref{lemma:main} that the conclusion (\ref{C_v1}) holds for
\[ v^\prime=\min\big(\mu(r)/8, \max(\mu(r)/4+X_lu,0)\big).\]
\end{remark}

The following corollary follows from Lemma \ref{lemma:main} by Moser's iteration. It is proved for the case $p\ge 2$ in \cite{Zh}, see Lemma 4.4 of \cite{Zh}.  Its proof is standard and is the same as
in the Euclidean setting, see 
Proposition 4.1 of \cite{Di} or Lemma 2 of \cite{To}. We include the proof here.

\begin{corollary}\label{prop:case1}
There exists a constant $\theta = \theta(n,p,L)>0$ such that the following statements hold. If we have 
\begin{equation}\label{condition1}
 \vert\{x\in B_r : X_lu<\mu(r)/4\} \vert\leq \theta |B_r|
\end{equation}
for an index $l\in\{1,\ldots,2n\}$ and for a ball $B_r\subset\Omega$,  then 
 \[\inf_{B_{r/2}}X_lu\ge 3\mu(r)/16;\]
Analogously, if we have 
\begin{equation}\label{condition2}
\vert\{x\in B_r : X_lu>-\mu(r)/4\}\vert\leq \theta |B_r|,
\end{equation}
for an index $l\in\{1,\ldots,2n\}$ and for a ball $B_r\subset\Omega$,  then  
\[\sup_{B_{r/2}}X_lu \le\, -3\mu(r)/16.\]
\end{corollary}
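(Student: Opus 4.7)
By Remark \ref{remark1}, the two statements are symmetric; I treat the first. It is enough to show $\|v\|_{L^\infty(B_{r/2})} \leq \mu(r)/16$, since by the definition of $v$ this forces $X_l u \geq 3\mu(r)/16$ on $B_{r/2}$: indeed, $v \leq \mu(r)/16 < \mu(r)/8$ rules out the plateau value $\mu(r)/8$, so $v = \max(\mu(r)/4 - X_l u, 0) \leq \mu(r)/16$. The plan is a Moser iteration starting from Lemma \ref{lemma:main}.

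Renormalise $\tilde v := v/\mu(r)\in [0,1/8]$. The factor $\mu(r)^{\beta+4}$ appears on both sides of \eqref{C_v1} after substitution and cancels, so the Caccioppoli inequality for $\tilde v$ no longer involves $\mu(r)$. Moreover, inspection of the proof of Lemma \ref{lemma:main} shows that the same inequality holds with $r^{-2}$ replaced by $(\rho_1-\rho_2)^{-2}$ for any smooth $\eta$ supported in a ball $B_{\rho_1}\subset B_r$, equal to $1$ on $B_{\rho_2}$, and satisfying the natural $(\rho_1-\rho_2)^{-1}$-type bounds on $|\X \eta|$ and $|T\eta|$. Fix $\gamma\in(1,\chi)$ where $\chi := Q/(Q-2) = 1+1/n$, and couple this Caccioppoli with the Sobolev inequality \eqref{sobolev} (with $q=2$) applied to $\eta^{(\beta+4)/2}\tilde v^{(\beta+4)/2}$. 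Absorbing the cross-term from $|\X \eta|$ via $\tilde v\leq 1/8$ and H\"older's inequality in the exponent $\gamma$, one obtains the averaged reverse H\"older inequality
\begin{equation*}
\Big(\intav_{B_{\rho_2}} \tilde v^{\chi(\beta+4)}\,dx\Big)^{1/\chi} \leq \frac{C\,(\beta+2)^4\,r^2}{(\rho_1-\rho_2)^2}\Big(\intav_{B_{\rho_1}} \tilde v^{\gamma\beta}\,dx\Big)^{1/\gamma},
\end{equation*}
valid for all $\beta\geq 0$ and $r/2\leq\rho_2<\rho_1\leq r$.

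Now iterate: take $\rho_j = r/2 + r/2^{j+1}$ and $\beta_j$ defined recursively by $\gamma\beta_{j+1} = \chi(\beta_j+4)$, starting from a fixed $\beta_0>0$. With $q_j := \gamma\beta_j$ and $g_j := (\intav_{B_{\rho_j}} \tilde v^{q_j}\,dx)^{1/q_j}$, the previous inequality rearranges to $g_{j+1}\leq K_j^{\chi/q_{j+1}}\, g_j^{a_j}$, where $K_j = C(\beta_j+2)^4\,4^{j+2}$ and $a_j = q_j/(q_j+4\gamma)\in(0,1)$. Since $\chi/\gamma>1$ the sequence $q_j$ grows geometrically, which makes $\sum_j (\chi/q_{j+1})\log K_j$ finite and $\alpha:=\prod_j a_j$ a strictly positive constant. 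Fixing $\gamma$ and $q_0$ in terms of $n,p,L$, iteration yields $\|\tilde v\|_{L^\infty(B_{r/2})}\leq C\,g_0^{\alpha}$ with $C=C(n,p,L)$. Since $\tilde v$ vanishes off $\{X_l u<\mu(r)/4\}$ and $\tilde v\leq 1/8$, hypothesis \eqref{condition1} gives $g_0\leq \tfrac{1}{8}\,\theta^{1/q_0}$, hence $\|v\|_{L^\infty(B_{r/2})} = \mu(r)\,\|\tilde v\|_\infty \leq C'\mu(r)\,\theta^{\alpha/q_0}$. Choosing $\theta = \theta(n,p,L)>0$ so small that $C'\theta^{\alpha/q_0}\leq 1/16$ concludes the proof.

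The main technical obstacle is the non-standard form of \eqref{C_v1}: the large prefactor $\mu(r)^4$ and the exponent mismatch between $v^{\beta+2}$ on the left and $v^{\gamma\beta}$ on the right. The prefactor is eliminated by the homogeneous rescaling $\tilde v = v/\mu(r)$, and the exponent mismatch forces the choice $1<\gamma<\chi$, which is precisely the condition that makes Moser iteration converge. A minor subtlety is the "$+4$" shift in $\chi(\beta+4)$: it makes the final exponent $\alpha$ strictly less than $1$, but any positive power of $\theta$ is enough to conclude.
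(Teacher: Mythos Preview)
Your argument is correct, but the route differs from the paper's in one notable way. You run the classical Moser iteration over a sequence of shrinking balls $B_{\rho_j}$ with corresponding cut-offs, which forces you to invoke a version of Lemma~\ref{lemma:main} for cut-offs other than the fixed one in \eqref{etaBr1}--\eqref{etaBr2}; you justify this ``by inspection of the proof''. That claim is true---tracing through Lemmas~\ref{lem:start}, \ref{lem:tech}, \ref{cor:Tu:high} and Corollary~\ref{cor:TX} one sees every occurrence of $r^{-2}$ arises from the bounds on $|\X\eta|$, $|T\eta|$, and the comparability \eqref{comparable1} on $E\cap B_r$ is inherited on any sub-ball---but it is not entirely trivial and deserves at least a sentence of justification.

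The paper instead keeps the \emph{single} cut-off $\eta$ of \eqref{etaBr1}--\eqref{etaBr2} throughout and iterates the quantity $(\eta v/\mu(r))^q$ rather than $v^q$ over shrinking balls. Concretely, applying Sobolev to $w=\eta^{(\beta+4)/2}v^{(\beta+4)/2}$ and Lemma~\ref{lemma:main} as stated yields
\[
\Big(\intav_{B_r}(\eta v)^{\chi(\beta+4)}\,dx\Big)^{1/\chi}\le c(\beta+2)^4\mu(r)^4\Big(\intav_{B_r}(\eta v)^{\gamma\beta}\,dx\Big)^{1/\gamma},
\]
and the same recursion $\gamma\beta_{i+1}=\chi(\beta_i+4)$ you use then gives $\sup_{B_r}\eta v\le c\,\mu(r)\,\theta^{(1-\gamma/\chi)/(4\chi)}$ directly. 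This is slightly cleaner: no family of cut-offs, no need to extend Lemma~\ref{lemma:main}, and no ratios $|B_{\rho_j}|/|B_{\rho_{j+1}}|$ to track. The trade-off is that your version is the textbook Moser scheme and perhaps more transparent to a reader unfamiliar with the ``fixed-$\eta$'' trick.
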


\begin{proof}
Suppose that (\ref{condition1}) holds for an index $l\in \{1,2,...,2n\}$.  We will apply Lemma \ref{lemma:mainprime} to prove Corollary \ref{prop:case1}.
The case that (\ref{condition2}) holds can be handled similarly by Lemma \ref{lemma:mainprime} for the function $v^\prime$, see Remark \ref{remark1}.  

Let $\beta\geq 0$ and 
 $$w=\eta^{\beta/2+2}v^{\beta/2+2},$$ 
where $\eta\in C^\infty_0(B_r)$ is a cut-off function satisfying (\ref{etaBr1}) and (\ref{etaBr2})
and $v$ is defined as in \eqref{def:v}. Then for any $\gamma>1$, we have that
 \begin{equation}\label{prest1}
 \begin{aligned}
  \int_{B_r} |\X w|^2\dx
   \leq & c(\beta+2)^2\Big(\int_{B_r}\eta^{\beta+2} v^{\beta+4}|\X\eta|^2\dx 
  + \int_{B_r}\eta^{\beta+4}v^{\beta+2}|\X v|^2\dx\Big) \\
  \leq & c(\beta+2)^4\mu(r)^4\frac{|B_r|^{1-\frac{1}{\gamma}}}{r^2}
\Big(\int_{B_r} \eta^{\gamma\beta}v^{\gamma\beta}\dx\Big)^\frac{1}{\gamma},
 \end{aligned}
 \end{equation}
where $c = c(n,p,L,\gamma)>0$. 
Here the second inequality follows from H\"older's inequality and Lemma \ref{lemma:mainprime}. 
By the 
Sobolev inequality \eqref{sobolev}, we also have that
\begin{equation}\label{prest2}
\Big(\intav_{B_r} |w|^{2\chi}\dx\Big)^\frac{1}{\chi}\ \leq\ 
c(n)\,r^2 \intav_{B_r} |\X w|^2\dx,
\end{equation}
where $\chi = Q/(Q-2) = (n+1)/n$. Combining \eqref{prest1} and \eqref{prest2}, we obtain that
\begin{equation}\label{prest3}
\Big(\intav_{B_r} (\eta v)^{\chi(\beta+4)}\dx\Big)^\frac{1}{\chi}
\ \leq\ c(\beta+2)^4\, \mu(r)^4
\Big(\intav_{B_r} (\eta v)^{\gamma\beta}\dx\Big)^\frac{1}{\gamma},
\end{equation}
where $c = c(n,p,L,\gamma)>0$. 
Now, we choose $\gamma = (n+2)/(n+1)$. Thus
$1<\gamma < \chi$. 
We will iterate inequality (\ref{prest3}). Let
\[ \beta_i= \frac{4\chi}{\chi-\gamma}\big(\big(\frac{\chi}{\gamma}\big)^{i+1}-1\big),
\quad i= 0,1,2,\ldots .\] 
Note that $\gamma\beta_{i+1} = \chi(\beta_i +4)$. Thus 
\eqref{prest3} with $\beta = \beta_i$ becomes
\begin{equation}\label{itm}
M_{i+1} \leq c_i
M_{i}^{\frac{\chi}{\gamma}\frac{\beta_i}{\beta_{i+1}}}
\end{equation}
for every $i= 0,1,2,\ldots$, where 
\[ c_i= c^{\frac{\chi}{\gamma}\frac{1}{\beta_{i+1}}}\beta_{i+1}^{\frac{4\chi}{\gamma}{\frac{1}{\beta_{i+1}}}},\]
and
\[ M_i\,  = \left(\,\intav_{B_r} 
\big( \eta v/\mu(r)\big)^{\gamma\beta_i}\dx\right)^\frac{1}{\gamma\beta_i}.\]

Iterating \eqref{itm}, we obtain that 
\begin{equation}\label{itm'}
 M_i \leq\ c M_0^{\big(\frac{\chi}{\gamma}\big)^i\,\frac{\beta_0}{\beta_{i}}},
\end{equation}
where $c = c(n,p,L)>0$.
Let $i\to \infty$, we end up with 
\[\lim\sup_{i\to \infty} M_i \leq c\,M_0^{1-\gamma/\chi},\] 
that is,   
\begin{equation}\label{prest4}
\sup_{B_{r}}\, \eta v/\mu(r)\leq c 
 \Big(\intav_{B_r} \big(\eta v/\mu(r)\big)^{4\chi}\dx\Big)^{\frac{1}{4\chi}(1-\gamma/\chi)},
\end{equation}
where $c = c(n,p,L)>0$. Now, since $\eta$ satisfies \eqref{etaBr1} and (\ref{etaBr2}), we derive from (\ref{prest4}) by our assumption (\ref{condition1}) that
\[ \sup_{B_{r/2}}\, v
\leq c\mu(r)\,\theta^{\frac{1}{4\chi}(1-\gamma/\chi)}\le \mu(r)/16, \]
provided that $\theta$ is small enough. This implies that 
$X_l u\ge 3\mu(r)/16$ in $B_{r/2}$. The proof is finished.
\end{proof}

\section{H\"older continuity of the horizontal gradient\label{section:Holder}}
In this section, we prove Theorem \ref{thm:main}. This proof is divided into two cases, 
$\delta >0$ and $\delta =0$,
in subsection \ref{subsection:proof} and subsection \ref{subsection:proof0}, respectively.  The proof for the case $\delta>0$ is the same as that of Theorem 1.2 of \cite{Zh}, with minor modifications. The proof for the case $\delta=0$ follows from an approximation arguments, see \cite{Zh}. We include the proof here.

\subsection{Proof of Theorem
\ref{thm:main} for the case $\delta >0$}\label{subsection:proof}
Let $u\in HW^{1,p}(\Omega)$ be a weak solution of equation (\ref{equation:main}) satisfying the structure condition (\ref{structure}) with $\delta>0$.
We fix a ball $B_{r_0}\subset\Omega$. For all balls
 $B_r, 0<r<r_0$, with the same center as $B_{r_0}$, we denote for $l=1,2,...,2n,$
\[ \mu_l(r)=\sup_{B_r} \vert X_lu\vert, \quad \mu(r)=\max_{1\le l\le 2n}\mu_l(r),\]
and 
\[ \omega_l(r)=\osc_{B_r} X_lu, \quad \omega(r)=\max_{1\le l\le 2n}\omega_l(r).\]
Clearly, we have $\omega(r)\le 2\mu(r)$.

We define  for any function $w$
$$ A_{k,\rho}^+(w) =\{ x\in B_\rho: (w(x)-k)^+=\max (w(x)-k,0)>0\};$$ 
and we define $A_{k,\rho}^-(w)$ similarly.  
To prove Theorem \ref{thm:main}, we need the following lemma. 

\begin{lemma}\label{lemma:cacci:k}
Let $B_{r_0}\subset\Omega$ be a  ball and $0<r<r_0/2$. Suppose that there is $\tau>0$ such that 
 \begin{equation}\label{comparable'}
 \vert \X u\vert\ge \tau \mu(r) \quad \text{in }\, A_{k,r}^+(X_l u)
 \end{equation}
 for an index $l\in\{1,2,...,2n\}$ and for a constant $k\in \R$. Then for any $q\ge 4$ and any $0<r^{\prime\prime}<r^\prime\le r$, we have 
\begin{equation}\label{HDG}
\begin{aligned}
 &\int_{B_{r^{\prime\prime}}} \weight | \X(X_lu-k)^{+}|^2\, dx\\  \le & 
 \frac{c}{(r^\prime-r^{\prime\prime})^2}\int_{B_{r^{\prime}}} \weight
 |(X_lu-k)^{+}|^2\, dx+ cK| A^+_{k,r^\prime}(X_lu)|^{1-\frac{2} {q}}
\end{aligned}
\end{equation}
where $K = r_0^{-2}|B_{r_0}|^{2/q}
\big(\delta+\mu(r_0)^2\big)^{p/2}$ 
and $c\,=\,c(n,p,L,q, \tau)>0$. 
\end{lemma}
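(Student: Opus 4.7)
The argument is a Caccioppoli estimate for the truncation $w = (X_l u - k)^+$. I would test equation \eqref{equation:horizontal} (if $l\le n$; the case $l>n$ is identical using \eqref{equation:horizontal2}) with $\varphi = \eta^2 w$, where $\eta\in C_0^\infty(B_{r'})$ is a standard cut-off with $\eta=1$ on $B_{r''}$, $|\X\eta|\le c/(r'-r'')$ and $|T\eta|\le c/(r'-r'')^2$. Since $X_i w = X_iX_lu$ and $Tw = TX_lu$ almost everywhere on $\{w>0\}$, the weak formulation expands into
$$\int \eta^2\bigl\langle D^2f(\X u)\,\X w,\,\X w\bigr\rangle\,dx \;=\; -R_1 - R_2 - R_3 - R_4 - R_5,$$
where $R_1$ is the usual cross term $2\sum_{i,j}\int \eta w\,X_i\eta\,D_jD_if(\X u)\,X_j w\,dx$; $R_2$ and $R_3$ are the two $Tu$-terms generated by the second sum in \eqref{equation:horizontal}, namely $\sum_i\int \eta^2 X_iw\,D_{n+l}D_if(\X u)\,Tu\,dx$ and its $\eta X_i\eta\,w$-analog; $R_4 = 2\int \eta T\eta\,w\,D_{n+l}f(\X u)\,dx$; and $R_5 = \int \eta^2 Tw\,D_{n+l}f(\X u)\,dx$. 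By the lower ellipticity in \eqref{structure}, the LHS dominates $\int \eta^2 \weight |\X w|^2\,dx$.

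For $R_1$, the bound $|D^2f|\le L\weight$, Cauchy--Schwarz, and Young's inequality absorb a small multiple of $\int\eta^2\weight|\X w|^2$ and leave the desired term $\frac{c}{(r'-r'')^2}\int_{B_{r'}}\weight\,w^2\,dx$. For $R_5$ I would integrate by parts once more in $T$ (using $[T,X_k]=0$ and skew-adjointness of $T$) to get
$$R_5 \;=\; -2\int \eta T\eta\,w\,D_{n+l}f(\X u)\,dx \;-\; \sum_k\int \eta^2 w\,D_kD_{n+l}f(\X u)\,X_k(Tu)\,dx,$$
the first piece joining $R_4$, and the second bounded pointwise by $c\int \eta^2 w\weight|\X(Tu)|\,dx$ through \eqref{structure}. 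Every remaining integrand is now supported in $A_{k,r'}^+(X_l u)$, and here hypothesis \eqref{comparable'} is decisive: it gives $|\X u|\ge \tau\mu(r)$, whence $\weight$ is comparable, with constants depending only on $\tau,n,p$, to a fixed power of $(\delta+\mu(r_0)^2)$. This uniform comparability on the relevant set is precisely what tames the singular weight in the range $1<p<2$ and allows the argument to proceed for every $1<p<\infty$.

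The main technical obstacle is the $\X(Tu)$-integral coming from $R_5$, together with the $Tu$-integrals from $R_2$--$R_4$. I would handle each of them by first restricting to $A_{k,r'}^+$ and then applying Hölder's inequality with the conjugate pair $\bigl(q/(q-2),\,q/2\bigr)$; this produces the geometric factor $|A_{k,r'}^+|^{1-2/q}$ together with an $L^{q/2}(B_{r_0})$-norm of a suitable power of $Tu$ or of $\weight^{1/2}|\X(Tu)|$, which in turn is controlled by Lemma \ref{cor:Tu:high} and Corollary \ref{cor:TX} applied on $B_{r_0}$ with a cut-off of scale $r_0$. The resulting bounds supply precisely the constant $r_0^{-2}|B_{r_0}|^{2/q}(\delta+\mu(r_0)^2)^{p/2}=K$. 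The delicate step is balancing the powers of $\eta$, $|Tu|$, and $\weight$ in a three-factor Hölder decomposition so that the integral matches exactly the form bounded by Corollary \ref{cor:TX} and so that the residual factors collapse cleanly into the right-hand side of \eqref{HDG}; this bookkeeping is what makes the $1<p<2$ case in $\bH^n$ genuinely harder than its Euclidean analog.
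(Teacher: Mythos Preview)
Your outline is correct and matches the approach the paper indicates: the paper gives no proof of its own, stating only that the argument is identical to that of Lemma~4.3 in \cite{Zh} with assumption \eqref{comparable'} replacing the hypothesis $p\ge 2$ at the two places it is used, and your plan---test \eqref{equation:horizontal} with $\eta^2(X_lu-k)^+$, absorb the elliptic cross term by Young, use \eqref{comparable'} to make $\weight$ two-sided comparable on $A^+_{k,r}$, and extract $|A^+_{k,r'}|^{1-2/q}$ via H\"older together with the $Tu$-integrability estimates of Lemma~\ref{cor:Tu:high} and Corollary~\ref{cor:TX} on a cut-off of scale $r_0$---is exactly this route. The one place to be careful is your $R_5$: Corollary~\ref{cor:TX} carries the weight $|Tu|^{q-2}$, so the H\"older split there needs an extra insertion of $|Tu|$ powers (or a further integration by parts trading $\X(Tu)$ for $Tu$, as in the treatment of $K_2^l$ in the Appendix) before it closes; this is the ``delicate bookkeeping'' you already flag, not a gap.
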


Lemma \ref{lemma:cacci:k} is similar to Lemma 4.3 of \cite{Zh}, which is valid for $p\ge 2$. Under our extra assumption (\ref{comparable'}), the proof of 
Lemma \ref{lemma:cacci:k} is exactly the same as that of Lemma 4.3 of \cite{Zh}. All of the steps go through in the same way. We  remark here that
there are two places in the proof of Lemma 4.3 of \cite{Zh} where the assumption $p\ge 2$ is used. Now due to our assumption (\ref{comparable'}), we may get through the proof for $1<p<\infty$. We omit the details of the proof of Lemma \ref{lemma:cacci:k}. 

\begin{remark}\label{rem:-version}
 Similarly, we can obtain an inequality, corresponding to (\ref{HDG}), 
 with 
 $(X_lu-k)^+$ replaced by $(X_lu-k)^-$ and
 $A^+_{k,r}(X_lu)$ replaced by $A^-_{k,r}(X_lu)$.
\end{remark}

Theorem \ref{thm:main} follows easily from the following theorem by an interation argument. 

\begin{theorem}\label{thm:holder2}
There exists a constant $s=s(n,p,L)\ge 1$ such that
for every $0<r\leq r_0/16$, we have 
\begin{equation}\label{mur4r}
\omega(r) \le (1-2^{-s})\omega(8r) + 2^s\big(\delta +\mu(r_0)^2\big)^\frac{1}{ 2}\left(\frac{r}{r_0}\right)^\alpha,
\end{equation}
where $\alpha = 1/2$ when $1<p<2$ and $\alpha = 1/p$ when $p\geq 2$. 
\end{theorem}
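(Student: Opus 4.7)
\noindent\emph{Approach.} The strategy is the De Giorgi-type level-set iteration used to prove Theorem 1.2 of \cite{Zh} in the regime $p\ge 2$. The only new ingredient needed here is Lemma \ref{lemma:cacci:k}, which extends the Caccioppoli inequality of \cite{Zh} to the full range $1<p<\infty$ under the auxiliary assumption \eqref{comparable'}; Corollary \ref{prop:case1} is used to verify \eqref{comparable'} in the main case, while the trivial bound $|\X u|\ge X_lu$ on a superlevel set of $X_lu$ takes care of the complementary one.

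\smallskip

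\noindent\emph{Setup and trivial case.} Fix $r\in(0,r_0/16]$, put $\Lambda=(\delta+\mu(r_0)^2)^{1/2}$ and $E=\Lambda(r/r_0)^\alpha$. If $\mu(8r)\le 2^{s-1}E$ then $\omega(r)\le 2\mu(r)\le 2\mu(8r)\le 2^sE$ and \eqref{mur4r} is immediate, so assume $\mu(8r)>2^{s-1}E$. Choose $l_0$ with $\mu_{l_0}(8r)=\mu(8r)$ and, after replacing $u$ by $-u$ if necessary (which preserves \eqref{equation:main} and \eqref{structure}), assume $\sup_{B_{8r}}X_{l_0}u=\mu(8r)$. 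Let $\theta=\theta(n,p,L)$ be the constant furnished by Corollary \ref{prop:case1}.

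\smallskip

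\noindent\emph{Dichotomy and iteration.} If $|\{X_{l_0}u<\mu(8r)/4\}\cap B_{8r}|\le\theta|B_{8r}|$, then by Corollary \ref{prop:case1}, $X_{l_0}u\ge 3\mu(8r)/16$ on $B_{4r}$, so $|\X u|\ge 3\mu(8r)/16$ uniformly on $B_{4r}$. Thus \eqref{comparable'} holds with $\tau=3/16$ for every index $l$ and every level $k$, and Lemma \ref{lemma:cacci:k} together with Remark \ref{rem:-version} supplies the Caccioppoli inequality \eqref{HDG} for both $(X_lu-k)^\pm$ on any sub-ball of $B_{4r}$. A standard De Giorgi level-set iteration built from \eqref{HDG} and the Sobolev inequality \eqref{sobolev} (with dyadically shrinking radii and rising levels, cf.~\cite{Di,To,Lieb} or \cite[Section 4]{Zh}) then yields, for each $l$ separately, $\osc_{B_r}X_lu\le(1-2^{-s_0})\osc_{B_{4r}}X_lu+CE$ with $s_0,C$ depending only on $n,p,L$; taking the maximum in $l$ gives \eqref{mur4r}. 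In the complementary case $|\{X_{l_0}u<\mu(8r)/4\}\cap B_{8r}|>\theta|B_{8r}|$, combining with $\sup_{B_{8r}}X_{l_0}u=\mu(8r)$ forces $\omega_{l_0}(8r)\ge\tfrac{3}{4}\mu(8r)$, hence $\omega(8r)$ and $\mu(8r)$ are comparable. On any superlevel set $A^+_{k,\rho}(X_{l_0}u)$ with $k\ge\mu(8r)/4$ and $\rho\in[r,8r]$, one has $|\X u|\ge X_{l_0}u\ge\mu(\rho)/4$, so \eqref{comparable'} holds with $\tau=1/4$ and Lemma \ref{lemma:cacci:k} is applicable. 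Started at level $k_0=\mu(8r)/4$ on $B_{8r}$, with the measure input $|A^+_{k_0,8r}|\le(1-\theta)|B_{8r}|$, and shrunk down to $B_{2r}$, the iteration produces $\sup_{B_{2r}}X_{l_0}u\le(1-\gamma)\mu(8r)+CE$ for some $\gamma>0$, whence $\osc_{B_r}X_{l_0}u\le(1-\gamma/2)\omega(8r)+CE$. For other indices $l\ne l_0$: either $\mu_l(8r)<\mu(8r)/4$, in which case $\omega_l(r)\le 2\mu_l(r)\le\tfrac{1}{2}\mu(8r)\le\tfrac{2}{3}\omega(8r)$ is already a decay by the comparability of $\mu$ and $\omega$, or $\mu_l(8r)\ge\mu(8r)/4$ and the same one-sided iteration is run for $X_lu$ (with a sign flip if necessary). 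Maximizing in $l$ yields \eqref{mur4r}.

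\smallskip

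\noindent\emph{Main obstacle.} The technical core is the calibration of the De Giorgi iteration in the complementary case: the measure-smallness input $(1-\theta)|B_{8r}|$ is available only at scale $8r$, whereas the pointwise oscillation gain must be extracted at scale $r$; the number of iterations, the level increments and the error term $CE$ (governed by the constant $K$ in \eqref{HDG}) must be balanced against the volume ratio $|B_{8r}|/|B_r|=8^Q$ so that the net decay is genuine. This calibration is standard in view of the Euclidean templates in \cite{Lieb,To,Di}, and the precise value of the exponent $\alpha$ ($=1/p$ for $p\ge 2$, $=1/2$ for $1<p<2$) emerges from the scaling $K=r_0^{-2}|B_{r_0}|^{2/q}(\delta+\mu(r_0)^2)^{p/2}$ of the error term.
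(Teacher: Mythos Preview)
Your overall strategy---De Giorgi iteration built on Lemma~\ref{lemma:cacci:k}, with Corollary~\ref{prop:case1} supplying the nondegeneracy in one alternative and the trivial bound $|\X u|\ge |X_lu|$ on superlevel sets supplying it in the other---is exactly what the paper does. However, the way you structure the dichotomy leaves a genuine gap in the complementary case for indices $l\neq l_0$.

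Your dichotomy branches on the single index $l_0$ (and a single sign). In the complementary branch you therefore know only that $|\{X_{l_0}u<\mu(8r)/4\}\cap B_{8r}|>\theta|B_{8r}|$, which is the measure input needed to run the one-sided density iteration (Lemma~4.2 of \cite{Zh}) for $X_{l_0}u$. For another index $l$ with $\mu_l(8r)\ge\mu(8r)/4$, you write that ``the same one-sided iteration is run for $X_lu$'', but that iteration requires the analogous measure input $|\{X_lu<\mu(8r)/4\}\cap B_{8r}|>\theta|B_{8r}|$ (or its sign-flipped version), and nothing in your argument provides it: the Caccioppoli inequality \eqref{HDG} alone, without the density hypothesis, yields boundedness but not decay of the supremum. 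Concretely, it is perfectly possible that $X_lu\equiv\mu_l(8r)$ on a set of nearly full measure in $B_{8r}$, in which case no one-sided improvement of $\sup X_lu$ is available.

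The paper avoids this by organizing the dichotomy over \emph{all} indices and both signs at once: Case~1 is that \emph{some} index $l$ satisfies \eqref{small1} or \eqref{small2} (so Corollary~\ref{prop:case1} gives $|\X u|\ge c\,\mu$ on a sub-ball and every $X_iu$ lands in a De Giorgi class), while Case~2 is that \emph{every} index violates both, which simultaneously hands you the measure input \eqref{case2 1}--\eqref{case2 2} needed for the one-sided iteration in each component and, as a bonus, the comparability $\omega(8r)\ge\tfrac34\mu(8r)$. Your argument is easily repaired by adopting this all-or-nothing split; as written, the step ``the same one-sided iteration is run for $X_lu$'' for $l\ne l_0$ is unjustified.
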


\begin{proof}
To prove Theorem \ref{thm:holder2}, 
we fix a ball $B_r$, with the same center as $B_{r_0}$, such that $0<r<r_0/16$. 
We may assume that
\begin{equation}\label{assum:mu}
\omega(r) \ge \big(\delta +\mu(r_0)^2\big)^\frac{1}{ 2}
\left(\frac{r}{r_0}\right)^{\alpha},
\end{equation}
since, otherwise, \eqref{mur4r} is true with $s=1$. 
In the following, we assume that (\ref{assum:mu}) is true, and we prove Theorem
\ref{thm:holder2}. 
We divide the 
proof of Theorem \ref{thm:holder2} into two cases.

{\it Case 1}. For at least one index $l\in\{1,\ldots,2n\}$, we have either
\begin{equation}\label{small1}
  |\{x\in B_{4r} : X_lu<\mu(4r)/4\}|
 \leq \theta |B_{4r}|
 \end{equation}
or
 \begin{equation}\label{small2}
  |\{x\in B_{4r}: X_lu>-\mu(4r)/4\}|
  \leq \theta |B_{4r}|, 
\end{equation}
where $\theta = \theta(n,p,L)>0$ is the constant in Corollary \ref{prop:case1}. 
Assume that (\ref{small1}) is true; the case (\ref{small2}) can be treated in the same way.
We apply Corollary \ref{prop:case1} and we obtain that 
\[\vert X_l u\vert \ge 3\mu(4r)/16\quad \text{in }\ B_{2r}.\]
Thus we have
 \begin{equation}\label{comparable2}
  \vert\X u\vert\ge  3\mu(2r)/16\quad \text{in }\ B_{2r}.
 \end{equation}
Due to (\ref{comparable2}),  we can apply Lemma \ref{lemma:cacci:k} with $q = 2Q$ to obtain that
\begin{equation}\label{DG estimate}
\begin{aligned}
\int_{B_{r''}} | \X (X_iu-k)^+|^2\, dx
\le & \frac{c}{(r'-r'')^2}\int_{B_{r'}}|(X_iu-k)^+|^2\,dx \\
&+  c K \big(\delta+\mu(2r)^2\big)^\frac{2-p}{2}| A^\pm_{k,r'}(X_iu)|^{1-\frac{1} {Q}}
\end{aligned}
\end{equation}
where $K = r_0^{-2}
|B_{r_0}|^{1/Q}\big(\delta+\mu(r_0)^2\big)^{p/2}$. 
The above inequality holds for all
$0<r''<r'\leq 2r,\ i\in\{1,\ldots,2n\}$ and all $k\in \R$. 
This means that for each $i$, $X_iu$ belongs to the De Giorgi class $DG^+(B_{2r})$, see Section 4.1 of \cite{Zh} for the definition. The corresponding version of 
Lemma \ref{lemma:cacci:k} for $(X_i u-k)^-$, see Remark \ref{rem:-version}, 
shows that $X_iu$ also belong to $DG^-(B_{2r})$. So, 
$X_iu$ belongs to $DG(B_{2r})$. Now we can apply Theorem 4.1 of \cite{Zh} to conclude that there is $s_0=s_0(n,p,L)>0$ such that for each $i\in\{1,2,...,2n\}$
\begin{equation}\label{osc}
 \osc_{B_{r}} X_iu\le (1-2^{-s_0})\osc_{B_{2r}}
X_iu + c K^\frac{1}{2}\big(\delta+\mu(2r)^2\big)^\frac{2-p}{4} r^\frac{1}{2}.
\end{equation}
Now notice that when $1<p<2$, we have that
$$\big(\delta+\mu(2r)^2\big)^\frac{2-p}{4} \leq \big(\delta+\mu(r_0)^2\big)^\frac{2-p}{4}.$$ 
When $p\geq 2$, our assumption \eqref{assum:mu} with $\alpha =1/p$ gives  
$$ \big(\delta+\mu(2r)^2\big)^\frac{2-p}{4}\leq 
2^{\frac{p-2}{2}}\omega(r)^\frac{2-p}{2} \leq 2^{\frac{p-2}{2}} \big(\delta+\mu(r_0)^2\big)^\frac{2-p}{4}
\left(\frac{r}{r_0}\right)^{\frac{2-p}{2p}},$$
where in the first inequality we used that $\mu(2r)\ge \omega(2r)/2\ge \omega(r)/2$.  
In both cases,  \eqref{osc} becomes
\begin{equation}\label{osc1}
 \osc_{B_{r}} X_iu \le (1-2^{-s_0})\osc_{B_{2r}}
X_iu+c\big(\delta +\mu(r_0)^2\big)^\frac{1}{ 2}\left(\frac{r}{r_0}\right)^\alpha,
\end{equation}
where $c=c(n,p,L)>0$,  $\alpha = 1/2$ when $1<p<2$ and $\alpha = 1/p$ when $p\geq 2$. This 
shows that in this case Theorem \ref{thm:holder2} is true. 

{\it Case 2}. If Case 1 does not happen, then for every $i\in\{1,\ldots,2n\}$, we have   
\begin{equation}\label{case2 1}
  |\{x\in B_{4r} : X_iu<\mu(4r)/4\}|
  > \theta |B_{4r}|,
 \end{equation} 
 and
 \begin{equation}\label{case2 2}
 |\{x\in B_{4r}: X_iu>-\mu(4r)/4\}|
 >\theta |B_{4r}|,
\end{equation}
where $\theta = \theta(n,p,L)>0$ is the constant in Corollary \ref{prop:case1}. 
Note that on the set 
$\{x\in B_{8r}: X_iu >\mu(8r)/4 \}$, we have trivially
 \begin{equation}\label{comparable4}
 \vert \X u\vert\ge \mu(8r)/4\quad \text{in } A^+_{k,8r}(X_iu)
 \end{equation}
 for all $k\geq \mu(8r)/4$. 
Thus, we can apply Lemma \ref{lemma:cacci:k} with $q = 2Q$ to conclude that 
\begin{equation}\label{DG estimate 2}
\begin{aligned}
\int_{B_{r''}} | \X (X_iu-k)^+|^2\, dx
\le& \frac{c}{(r'-r'')^2}\int_{B_{r'}}|(X_iu-k)^+|^2\,dx \\
& + c K\big(\delta+\mu(8r)^2\big)^\frac{2-p}{2}| A^+_{k,r'}(X_iu)|^{1-\frac{1} {Q}}
\end{aligned}
\end{equation}
where $K = r_0^{-2}
|B_{r_0}|^{1/Q}\big(\delta+\mu(r_0)^2\big)^{p/2}$, 
whenever $k\geq k_0= \mu(8r)/4 $ and $0<r^{\prime\prime}<r^\prime\le 8r$. 
The above inequality is true all $i\in\{ 1,2,...,2n\}$. We note that
\eqref{case2 1} implies trivially that
\[|\{x\in B_{4r} : X_iu<\mu(8r)/4\}|
  > \theta |B_{4r}|.\] 
Now we can apply Lemma 4.2 of  \cite{Zh} to conclude that there exists $s_1 = s_1(n,p,L)>0$ such that 
\begin{equation}\label{sup est}
 \sup_{B_{2r}} X_iu \le\sup_{B_{8r}} X_i u -2^{-s_1}\big(\sup_{B_{8r}}
X_iu-\mu(8r)/4\big)+ cK^\frac{1}{2}\big(\delta+\mu(8r)^2\big)^\frac{2-p}{4}r^\frac{1}{2}.
\end{equation}
From (\ref{case2 2}), we can derive similarly, see Remark \ref{rem:-version}, that
\begin{equation}\label{inf est}
 \inf_{B_{2r}} X_iu \ge \inf_{B_{8r}} X_iu +2^{-s_1}\big(-\inf_{B_{8r}}
X_iu-\mu(8r)/4\big) - c K^\frac{1}{2}\big(\delta+\mu(8r)^2\big)^\frac{2-p}{4}r^\frac{1}{2}.
\end{equation}
Note that the above two inequalities \eqref{sup est} and \eqref{inf est} yield
\[\osc_{B_{2r}} X_iu \le\ (1-2^{-s_1})\osc_{B_{8r}}
X_iu+ 2^{-s_1-1}\mu(8r)+c K^\frac{1}{2}\big(\delta+\mu(8r)^2\big)^\frac{2-p}{4}r^\frac{1}{2},
\]
and hence 
\begin{equation}\label{osc2}
\omega(2r) \leq \big(1-2^{-s_1}\big)\omega(8r)+2^{-s_1-1}\mu(8r)+c K^\frac{1}{2}\big(\delta+\mu(8r)^2\big)^\frac{2-p}{4}r^\frac{1}{2}.
\end{equation}
Now notice that when $1<p<2$, we have that
$$\big(\delta+\mu(8r)^2\big)^\frac{2-p}{4} \leq \big(\delta+\mu(r_0)^2\big)^\frac{2-p}{4}$$ 
When $p\geq 2$, our assumption \eqref{assum:mu} with $\alpha =1/p$ gives  
$$ \big(\delta+\mu(8r)^2\big)^\frac{2-p}{4}\leq 
2^{\frac{p-2}{2}}\mu(r)^\frac{2-p}{2} \leq 2^{\frac{p-2}{2}} \big(\delta+\mu(r_0)^2\big)^\frac{2-p}{4}
\left(\frac{r}{r_0}\right)^{\frac{2-p}{2p}},$$
where in the first inequality we used the fact that $\mu(8r)\ge \omega(8r)/2\ge \omega(r)/2$. 
In both cases,  \eqref{osc2} becomes
\[\omega(2r) \leq \big(1-2^{-s_1}\big)\omega(8r)+2^{-s_1-1}\mu(8r) +c\big(\delta +\mu(r_0)^2\big)^\frac{1}{ 2}\left(\frac{r}{r_0}\right)^\alpha.\]
Now we notice from the conditions (\ref{case2 1}) and (\ref{case2 2}) that
\[ \omega(8r)\ge \mu(8r)-\mu(4r)/4\ge 3\mu(8r)/4. \]
Then from the above two inequalities we arrive at
\[\omega(2r) \leq \big(1-2^{-s_1-2}\big)\omega(8r) +c\big(\delta +\mu(r_0)^2\big)^\frac{1}{ 2}\left(\frac{r}{r_0}\right)^\alpha,\]
where $c=c(n,p,L)>0$, $\alpha = 1/2$ when $1<p<2$ and $\alpha = 1/p$ when $p\geq 2$.
This shows that also in this case Theorem \ref{thm:holder2} is true. 
Thus, Theorem \ref{thm:holder2} is true with the  
choice of $s = \max(1, s_0, s_1+2, \log_2 c)$. The proof of Theorem \ref{thm:holder2} is finished. 
\end{proof}

\subsection{Proof of Theorem
\ref{thm:main} for the case $\delta =0$}\label{subsection:proof0}
The proof of Theorem \ref{thm:main} for this case follows from 
an approximation argument, exactly in the same way as
that in Section 5.3 of \cite{Zh}. Suppose that the integrand $f$ of functional
(\ref{functional}) satisfies the
structure condition 
\begin{equation}\label{structure 0}
\begin{aligned} | z|^{p-2} |\xi|^2\le
\langle D^2f(z) &\xi,\xi\rangle \le L|
z|^{p-2}| \xi|^2;\\
| Df(z)| & \le L| z|^{p-1}
\end{aligned}
\end{equation}
for all $z, \xi\in \R^{2n}$, where $L\ge 1$ is a constant.
We may assume that $f(0) = 0$. For $\delta>0$, we define
\begin{equation}\label{fdelta}
f_\delta(z)\ =\begin{cases} \big(\delta+f(z)^{\frac{2}{p}}\big)^{\frac p 2}, 
\quad &\text{ if } 1<p<2;\\
\delta^{\frac{p-2}{2}}|z|^2+f(z), &\text{ if } \,p\ge 2.
\end{cases}
\end{equation}
Then, it is easy to see that $f_\delta$ satisfies a structure condition similar to 
\eqref{structure} for all $\delta>0$, that is,    
\begin{equation}\label{structure4}
\begin{aligned} \frac{1}{\tilde L}(\delta+| z|^2)^{\frac{p-2}{2}} |\xi|^2 \le
 \langle D^2f_\delta(z) &\xi,\xi\rangle \le {\tilde L}(\delta+|
z|^2)^{\frac{p-2}{2}}| \xi|^2;\\
| Df_\delta(z)| & \le {\tilde L}  (\delta+| z|^2)^{\frac{p-2}{2}}|z|,
\end{aligned}
\end{equation}
where  $\tilde L = \tilde L(p,L)\geq 1$.
Now let $u\in HW^{1,p}(\Omega)$ be a solution of \eqref{equation:main}
satisfying the structure condition (\ref{structure 0}).   We 
denote by $u_\delta$ the unique weak solution of the following Dirichlet problem
\begin{equation}\label{direchelet3}
\begin{cases}
\divo \big(Df_\delta(\X w)\big) = 0 \quad  &\textrm{in } \Omega;\\
 w-u\in HW^{1,p}_0( \Omega).  &
\end{cases}
\end{equation}
Then we may apply Theorem \ref{thm:main} for the case $\delta>0$ to solution $u_\delta$. We obtain the uniform estimate (\ref{Xu:holder}) for $u_\delta$. 
Letting $\delta\to 0$, we conclude the proof of Theorem \ref{thm:main} for the case $\delta=0$. The proof is finished.

\section{Appendix}
\begin{proof}[Proof of Lemma \ref{lem:start}]
Fix $l\in\{ 1,2,...,n\}$ and $\beta\ge 0$. Let $\eta\in C^\infty_0(\Omega)$ be a non-negative cut-off function. Set
\begin{equation}\label{varphi} 
\varphi=\eta^{\beta+2}v^{\beta+2} |\X u|^2 X_lu.
\end{equation}
We use $\varphi$ as a test-function in equation (\ref{equation:horizontal}) to obtain that 
\begin{equation}\label{eq1}
\begin{aligned}
\int_\Omega &\sum_{i,j=1}^{2n} \eta^{\beta+2}v^{\beta+2}D_jD_if(\X u)X_jX_iuX_i\big( |\X u|^2 X_lu\big)\, dx\\
= & -(\beta+2)\int_\Omega\sum_{i,j=1}^{2n} \eta^{\beta+1} v^{\beta+2}|\X u|^2 X_lu D_jD_if(\X u)X_jX_luX_i\eta\, dx\\
& -(\beta+2)\int_\Omega \sum_{i,j=1}^{2n} \eta^{\beta+2} v^{\beta+1}
|\X u|^2 X_l uD_jD_if(\X u)X_iX_luX_iv\, dx\\
& -\int_\Omega\sum_{i=1}^{2n} D_{n+l}D_if(\X u) TuX_i\big(\eta^{\beta+2}
v^{\beta+2}|\X u|^2 X_lu\big)\, dx\\
& +\int_\Omega T\big(D_{n+l}f(\X u)\big)\eta^{\beta+2}v^{\beta+2}|\X u|^2 X_lu\, dx\\
= &\, I_1^l+I_2^l+I_3^l+I_4^l.
\end{aligned}
\end{equation}
Here we denote the integrals in the right hand side of (\ref{eq1}) by 
$I_1^l, I_2^l, I_3^l$ and $I_4^l$ in order respectively. Similarly, by equation
(\ref{equation:horizontal2}) we have for 
all $l\in\{n+1,n+2,...,2n\}$ that
\begin{equation}\label{eq2}
\begin{aligned}
\int_\Omega &\sum_{i,j=1}^{2n} \eta^{\beta+2}v^{\beta+2}D_jD_if(\X u)X_jX_iuX_i\big( |\X u|^2 X_lu\big)\, dx\\
= & -(\beta+2)\int_\Omega\sum_{i,j=1}^{2n} \eta^{\beta+1} v^{\beta+2}|\X u|^2 X_lu D_jD_if(\X u)X_jX_luX_i\eta\, dx\\
& -(\beta+2)\int_\Omega \sum_{i,j=1}^{2n} \eta^{\beta+2} v^{\beta+1}
|\X u|^2 X_l uD_jD_if(\X u)X_iX_luX_iv\, dx\\
& +\int_\Omega\sum_{i=1}^{2n} D_{l-n}D_if(\X u) TuX_i\big(\eta^{\beta+2}
v^{\beta+2}|\X u|^2 X_lu\big)\, dx\\
& -\int_\Omega T\big(D_{l-n}f(\X u)\big)\eta^{\beta+2}v^{\beta+2}|\X u|^2 X_lu\, dx\\
= &\,  I_1^l+I_2^l+I_3^l+I_4^l.
\end{aligned}
\end{equation}
Again we denote the integrals in the right hand side of (\ref{eq2}) by 
$I_1^l, I_2^l, I_3^l$ and $I_4^l$ in order respectively. 
Summing up the above equation (\ref{eq1}) and (\ref{eq2}) for all $l$ from
$1$ to $2n$, we end up with
\begin{equation}\label{equ3}
\int_\Omega \sum_{i,j,l} \eta^{\beta+2}v^{\beta+2}D_jD_if(\X u)X_jX_iuX_i\big( |\X u|^2 X_lu\big)\, dx=  \sum_l \sum_{m=1}^4 I_m^l.
\end{equation}
Here all sums for $i,j,l$ are from $1$ to $2n$. 

In the following, we estimate both sides of (\ref{equ3}). For the left hand of (\ref{equ3}), note that
\[ X_i\big(|\X u|^2 X_lu\big)=|\X u|^2 X_iX_l u+X_i(|\X u|^2)X_lu.\]
Then by the structure condition (\ref{structure}), we have that
\[ \sum_{i,j,l}D_jD_if(\X u)X_jX_luX_i\big(|\X u|^2X_lu\big)\ge 
\weight |\X u|^2|\X\X u|^2,\]
which gives us the following estimate for the left hand side of (\ref{equ3})
\begin{equation}\label{est:left}
\text{left of } (\ref{equ3}) \ge \int_\Omega \eta^{\beta+2}v^{\beta+2}
\weight |\X u|^2|\X\X u|^2\, dx.
\end{equation}

Then we estimate the right hand side of (\ref{equ3}). We will show that
$I_m^l$ satisfies the following estimate for each $l=1,2,...,2n$ and each $m=1,2,3,4$
\begin{equation}\label{est1}
\begin{aligned}
| I_m^l|\le & \, \frac{1}{36n}\int_\Omega \eta^{\beta+2}v^{\beta+2}\weight |\X u|^2|\X\X u|^2\,dx\\
 & + c (\beta+2)^2\int_\Omega \eta^\beta \big(|\X \eta|^2+\eta| T\eta|\big) v^{\beta+2}
\weight |\X u|^4 \, dx\\
& +  c(\beta+2)^2\int_\Omega \eta^{\beta+2} v^ \beta\weight |\X u|^4| \X v|^2\, dx\\
& +
c\int_\Omega\eta^{\beta+2} v^{\beta+2}\weight |\X u|^2| Tu|^2\, dx,
\end{aligned}
\end{equation}
where $c=c(n,p,L)>0$. Then the lemma follows from the above estimates 
(\ref{est:left}) and (\ref{est1})
for both sides of (\ref{equ3}). The proof of the lemma is finished, modulo
the proof of (\ref{est1}). In the rest, we prove (\ref{est1}) in the order of $m=1, 2,3,4$.

First, when $m=1$, we have for $I_1^l, l=1,2,...,2n$, by the structure condition (\ref{structure}) that
\[
| I_1^l| \le c(\beta+2)\int_\Omega \eta^{\beta+1}| \X \eta| v^{\beta+2}\weight |\X u|^3|\X\X u|\, dx,
\]
from which it follows by Young's inequality that
\begin{equation}\label{est:I1}
\begin{aligned}
| I_1^l|\le &\, \frac{1}{36n}\int_\Omega \eta^{\beta+2}v^{\beta+2}
\weight |\X u|^2|\X\X u|^2\,dx\\
 & + c(\beta+2)^2\int_\Omega \eta^\beta |\X \eta|^2 v^{\beta+2}
\weight |\X u|^4\, dx.
\end{aligned}
\end{equation}
Thus (\ref{est1}) holds for $I_1^l, l=1,2,...,2n$. 

Second, when $m=2$,  we have for $I_1^l, l=1,2,...,2n$, by the structure condition (\ref{structure}) that
\[ | I_2^l|\le c(\beta+2)\int_\Omega\eta^{\beta+2}v^{\beta+1}
\weight |\X u|^3|\X\X u\|\X v|\, dx,
\]
from which it follows by Young's inequality that
\begin{equation}\label{est:I2}
\begin{aligned}
| I_2^l|\le &\, \frac{1}{36n}\int_\Omega \eta^{\beta+2}v^{\beta+2}
\weight |\X u|^2|\X\X u|^2\,dx\\
 & + c(\beta+2)^2\int_\Omega \eta^{\beta+2} v^{\beta}
\weight |\X u|^4|\X v|^2\, dx.
\end{aligned}
\end{equation}
This proves (\ref{est1}) for $I_2^l, l=1,2,...,2n$.

Third, when $m=3$, we note that
\begin{equation*}
\begin{aligned}
\big| X_i & \big(  \eta^{\beta+2}v^{\beta+2}|\X u|^2 X_lu\big)\big|\le  3 \eta^{\beta+2} v^{\beta+2} |\X u|^2 |\X\X u|\\
& + (\beta+2)\eta^{\beta+1}v^{\beta+2}
|\X u|^3|\X \eta| +(\beta+2)\eta^{\beta+2}v^{\beta+1}
|\X u|^3| \X v|.
\end{aligned}
\end{equation*}
Thus by the structure condition (\ref{structure}), we have
\begin{equation*}
\begin{aligned}
| I_3^l| \le &\,  c \int_\Omega \eta^{\beta+2}v^{\beta+2}\weight |\X u|^2|\X \X u| | Tu|\, dx\\
& + c(\beta+2)\int_\Omega\eta^{\beta+1}|\X \eta| v^{\beta+2}\weight |\X u|^3| Tu|\, dx\\
& + c(\beta+2)\int_\Omega \eta^{\beta+2} v^{\beta+1}\weight |\X u|^3|\X v\| Tu|\, dx,
\end{aligned}
\end{equation*}
from which it follows by Young's inequality that
\begin{equation}\label{est:I3}
\begin{aligned}
| I_3^l| \le &\, \frac{1}{36n}\int_\Omega \eta^{\beta+2}v^{\beta+2}
\weight |\X u|^2|\X\X u|^2\,dx\\
&+c \int_\Omega \eta^{\beta+2}v^{\beta+2}\weight |\X u|^2 | Tu|^2\, dx\\
& + c(\beta+2)^2 \int_\Omega\eta^{\beta}|\X \eta|^2 v^{\beta+2}\weight |\X u|^4\, dx\\
& + c(\beta+2)^2\int_\Omega \eta^{\beta+2} v^{\beta}\weight |\X u|^4|\X v|^2\, dx.
\end{aligned}
\end{equation}
This proves (\ref{est1}) for $I_3^l, l=1,2,...,2n$.

Finally, when $m=4$, we prove (\ref{est1}) for $I_4^l$. We consider
only the case $l=1,2,...,n$. The case $l=n+1,n+2,...,2n$ can be treated similarly. 
Let 
\begin{equation}\label{def:w} 
w=\eta^{\beta+2}|\X u|^2 X_lu.
\end{equation}
Then we can write test-function $\varphi$ defined as in (\ref{varphi}) as
$\varphi=v^{\beta+2}w$. We rewrite $T$ as $T=X_1X_{n+1}-X_{n+1}X_1$. Then
integration by parts yields
\begin{equation}\label{est:I41}
\begin{aligned}
I_4^l=&\int_\Omega T\big(D_{n+l}f(\X u)\big)\varphi\, dx\\
=&\int_\Omega X_1\big(D_{n+l}f(\X u)\big) X_{n+1}\varphi-X_{n+1}\big(D_{n+l}f(\X u)\big)X_1\varphi\, dx.
\end{aligned}
\end{equation}
Note that 
$$\X \varphi= (\beta+2)v^{\beta+1}w\X v+ v^{\beta+2}\X w.$$ 
Thus (\ref{est:I41}) becomes
\begin{equation}\label{est:I42}
\begin{aligned}
I_4^l=&\, (\beta+2)\int_\Omega v^{\beta+1} w\Big(X_1\big(D_{n+l}f(\X u)\big)X_{n+1} v-X_{n+1}\big( D_{n+l}f(\X u)\big)X_1 v\Big)\, dx\\
&+\int_\Omega v^{\beta+2}\Big( X_1\big(D_{n+l}f(\X u)\big)X_{n+1}w-X_{n+1}\big(D_{n+l}f(\X u)\big)X_1w\Big)\, dx\\
= &\, J^l+K^l.
\end{aligned}
\end{equation}
Here we denote the first and the second integral in the right hand side of 
(\ref{est:I41}) by $J^l$ and $K^l$, respectively. We estimate 
$J^l$ as follows. By the structure condition (\ref{structure}) and the
definition of $w$ as in (\ref{def:w}),
\begin{equation*}
| J^l| \le c(\beta+2)\int_\Omega \eta^{\beta+2} v^{\beta+1}
\weight |\X u|^3| \X\X u\|\X v|\, dx,
\end{equation*}
from which it follows by Young's inequality, that
\begin{equation}\label{est:Jl}
\begin{aligned}
| J^l| \le &\, \frac{1}{72n}\int_\Omega \eta^{\beta+2}v^{\beta+2}\weight |\X u|^2|\X\X u|^2\,dx\\
&+c(\beta+2)^2\int_\Omega \eta^{\beta+2} v^{\beta}
\weight |\X u|^4|\X v|^2\, dx.
\end{aligned}
\end{equation}
The above inequality shows that $J^l$ satisfies similar estimate as (\ref{est1}) for all $l=1,2,...,n$.
Then we estimate $K^l$. Integration by parts again, yields 
\begin{equation}\label{est:Kl}
\begin{aligned}
K^l=&\, (\beta+2) \int_\Omega v^{\beta+1} D_{n+l}f(\X u)\Big( X_{n+1} vX_1 w-X_1vX_{n+1}w\Big)\, dx\\
&-\int_\Omega v^{\beta+2}D_{n+l}f(\X u) Tw\, dx\\
= &\, K_1^l+K_2^l.
\end{aligned}
\end{equation}
For $K_1^l$, we have by the structure condition (\ref{structure}) that
\begin{equation*}
\begin{aligned}
| K^l_1| \le\ &  c(\beta+2)\int_\Omega \eta^{\beta+2}v^{\beta+1}\weight |\X u|^3|\X\X u\|\X v|\, dx\\
& +c(\beta+2)^2\int_\Omega\eta^{\beta+1}v^{\beta+1}\weight |\X u|^4|\X v\|\X \eta|\, dx
\end{aligned}
\end{equation*}
from which it follows by Young's inequality that
\begin{equation}\label{est:Kl1}
\begin{aligned}
| K^l_1| \le &\, \frac{1}{144n}\int_\Omega \eta^{\beta+2}v^{\beta+2}\weight |\X u|^2|\X\X u|^2\,dx\\
&+c(\beta+2)^2\int_\Omega \eta^{\beta+2} v^{\beta}
\weight |\X u|^4|\X v|^2\, dx\\
& + c(\beta+2)^2\int_\Omega \eta^{\beta}|\X \eta|^2 v^{\beta+2}\weight |\X u|^4\, dx.
\end{aligned}
\end{equation}
The above inequality shows that $K_1^l$ also satisfies similar estimate as (\ref{est1}) for all $l=1,2,...,n$. We continue to estimate $K_2^l$ in (\ref{est:Kl}). Note that
\begin{equation*}
\begin{aligned}
Tw=\, (\beta+2)\eta^{\beta+1}|\X u|^2 X_luT\eta+\eta^{\beta+2}|\X u|^2 X_lTu + \sum_{i=1}^{2n}2\eta^{\beta+2}X_luX_iuX_iTu.
\end{aligned}
\end{equation*}
Therefore we write $K^l_2$ as
\begin{equation*}
\begin{aligned}
K_2^l = &\, -(\beta+2)\int_\Omega\eta^{\beta+1}v^{\beta+2} D_{n+l} f(\X u)|\X u|^2 X_luT\eta\, dx\\
&\, -\int_\Omega \eta^{\beta+2}v^{\beta+2} D_{n+l}f(\X u)|\X u|^2 X_lTu\, dx\\
&-2\int_\Omega\eta^{\beta+2}v^{\beta+2} D_{n+l} f(\X u)X_luX_iu X_iTu\, dx.
\end{aligned}
\end{equation*}
For the last two integrals in the above equality, we apply integration by parts.
We obtain that
\begin{equation*}
\begin{aligned}
K_2^l = &\, -(\beta+2)\int_\Omega\eta^{\beta+1}v^{\beta+2} D_{n+l} f(\X u)|\X u|^2 X_luT\eta\, dx\\
&\, +\int_\Omega X_l\Big(\eta^{\beta+2}v^{\beta+2} D_{n+l}f(\X u)
\vert\X u\vert^2\Big)Tu\, dx\\
&+2\int_\Omega X_i\Big(\eta^{\beta+2}v^{\beta+2} D_{n+l} f(\X u)X_luX_iu\Big)Tu\, dx.
\end{aligned}
\end{equation*}
Now we may estimate the integrals in the above equality by the structure condition (\ref{structure}). We obtain the following estimate for $K^l_2$.
\begin{equation*}
\begin{aligned}
| K_2^l|\le &\, c(\beta+2)\int_\Omega \eta^{\beta+1}v^{\beta+2}\weight |\X u|^4| T\eta|\, dx\\
&\, +c\int_\Omega \eta^{\beta+2}v^{\beta+2}\weight |\X u|^2|\X\X u\| Tu|\, dx\\
&\, +c(\beta+2)\int_\Omega \eta^{\beta+2} v^{\beta+1}\weight |\X u|^3|\X v\| Tu|\, dx\\
&\, +c(\beta+2)\int_\Omega \eta^{\beta+1}v^{\beta+2}\weight |\X u|^3|\X\eta\| Tu|\, dx.
\end{aligned}
\end{equation*}
By Young's inequality, we end up with the following estimate for $K_2^l$
\begin{equation}\label{est:K2l}
\begin{aligned}
| K_2^l|\le & \, \frac{1}{144n}\int_\Omega \eta^{\beta+2}v^{\beta+2}\weight |\X u|^2|\X\X u|^2\,dx\\
 & + c (\beta+2)^2\int_\Omega \eta^\beta \big(|\X \eta|^2+\eta| T\eta|\big)v^{\beta+2}
\weight |\X u|^4\, dx\\
& +  c(\beta+2)^2\int_\Omega \eta^{\beta+2} v^ \beta\weight |\X u|^4| \X v|^2\, dx\\
& +
c\int_\Omega\eta^{\beta+2} v^{\beta+2}\weight |\X u|^2| Tu|^2\, dx.
\end{aligned}
\end{equation}
This shows that $K_2^l$ also satisfies similar estimate as (\ref{est1}). Now we combine the estimates (\ref{est:Kl1}) for $K^l_1$ and (\ref{est:K2l}) for $K_2^l$. Recall that $K^l$ is the sum of $K_1^l$ and $K_2^l$ as denoted in (\ref{est:Kl}). We obtain that the following estimate for $K^l$.
\begin{equation}\label{est:Kll}
\begin{aligned}
| K^l|\le & \, \frac{1}{72n}\int_\Omega \eta^{\beta+2}v^{\beta+2}\weight |\X u|^2|\X\X u|^2\,dx\\
 & + c (\beta+2)^2\int_\Omega \eta^\beta \big(|\X \eta|^2+\eta| T\eta|\big)v^{\beta+2}
\weight |\X u|^4\, dx\\
& +  c(\beta+2)^2\int_\Omega \eta^{\beta+2} v^ \beta\weight |\X u|^4| \X v|^2\, dx\\
& +
c\int_\Omega\eta^{\beta+2} v^{\beta+2}\weight |\X u|^2| Tu|^2\, dx.
\end{aligned}
\end{equation}
Recall that $I_4^l$ is the sum of $J^l$ and $K^l$. We combine the estimates (\ref{est:Jl}) for $J^l$ and (\ref{est:Kll}) for $K^l$, and we can see that the claimed estimate (\ref{est1}) holds for $I_4^l$ for all $l=1,2,...,n$. We can prove 
(\ref{est1}) similarly for $I_4^l$ for all $l=n+1,n+2,...,2n$. This finishes the proof of the claim (\ref{est1}) for $I_m^l$ for all $l=1,2,...,2n$ and all $m=1,2,3,4$, and hence also the proof of the lemma.
\end{proof}

\end{document}